

\documentclass[preprint,11pt]{elsarticle}




\usepackage{amssymb}
\usepackage{amsmath}
\usepackage{amsthm}
\usepackage{color}



\setlength{\oddsidemargin}{.5in}
\setlength{\evensidemargin}{.5in}
\setlength{\textwidth}{5.5in}

\begin{document}

\newcommand{\rc}{\color{red}}
\newcommand{\ec}{\color{black}}
\newcommand{\bc}{\color{blue}}

\def\m{{\mathfrak m}} 

\def\p{{\mathfrak p}}    
\def\q{{\mathfrak q}}    
\def\n{{\mathfrak n}} 

\def\S{{\mathcal S}} 
\def\T{{\mathcal T}} 
\def\U{{\mathcal U}} 
\def\O{{\mathcal O}} 
\def\R{{\mathcal R}} 
\def\E{{\mathcal E}} 
\def\B{{\mathcal B}}

\def\I{{\mathcal I}} 
\def\v{{\mathfrak v}}
\def\V{{\mathcal V}}
\def\W{{\mathcal w}}

\def\N{\mathbb N} 

\def\SN{\mathbb N} 
\def\SQ{{\mathbb Q}} 
\def\SZ{\mathbb Z} 
\def\SF{\mathbb F} 
\def\SR{\mathbb R} 
\def\SC{\mathbb C} 

\def\spec{\operatorname{Spec }} 

\newtheorem{theorem}{Theorem}[section]
\newtheorem{lemma}[theorem]{Lemma}
\newtheorem{proposition}[theorem]{Proposition}
\newtheorem{corollary}[theorem]{Corollary}
\newtheorem{problem}[theorem]{Problem}

\theoremstyle{definition}
\newtheorem{defi}[theorem]{Definitions}
\newtheorem{definition}[theorem]{Definition}
\newtheorem{remark}[theorem]{Remark}
\newtheorem{example}[theorem]{Example}
\newtheorem{question}[theorem]{Question}
\newtheorem{comment}[theorem]{Comment}
\newtheorem{comments}[theorem]{Comments}
\newtheorem{construction}[theorem]{Construction}
\newtheorem{notation}[theorem]{Notation}
\newtheorem{notation and remarks}[theorem]{Notation and Remarks}
\newtheorem{facts}[theorem]{Facts}
\newtheorem{fact}[theorem]{Fact}

\newtheorem{discussion}[theorem]{Discussion}
\newtheorem{setting}[theorem]{Setting}

\renewcommand{\thedefi}{}

\def\ff{\frak}
\def\Spec{\mbox{\rm Spec }}
\def\Proj{\mbox{\rm Proj }}
\def\Rees{\mbox{\rm Rees }}

\def\hgt{\mbox{\rm ht }}
\def\type{\mbox{ type}}
\def\Hom{\mbox{ Hom}}
\def\rank{\mbox{ rank}}
\def\Ext{\mbox{ Ext}}
\def\Ker{\mbox{ Ker}}
\def\Max{\mbox{\rm Max}}
\def\End{\mbox{\rm End}}
\def\xpd{\mbox{\rm xpd}}
\def\Ass{\mbox{\rm Ass}}
\def\emdim{\mbox{\rm emdim}}
\def\epd{\mbox{\rm epd}}
\def\repd{\mbox{\rm rpd}}
\def\ord{\mbox{\rm ord}}

\begin{frontmatter}



\title{The tree of quadratic transforms of a    regular local ring of dimension two}

\author[Heinzer]{William Heinzer}
\address[Heinzer]{Department of Mathematics, Purdue University, West
Lafayette, Indiana 47907-1395 U.S.A.}
\ead{heinzer@purdue.edu}

\author[Loper]{K. Alan Loper}
\address[Loper]{Department of Mathematics, Ohio State University -- Newark,  Newark, OH 43055}
\ead{lopera@math.ohio-state.edu}

\author[Olberding]{Bruce Olberding} 
\address[Olberding]{Department of Mathematical Sciences, New Mexico State University,
 Las Cruces, NM 88003-8001 U.S.A.}
 \ead{olberdin@nmsu.edu}




\begin{abstract}
Let $D$ be a 2-dimensional regular local ring and let  $Q(D)$  denote the quadratic tree of 2-dimensional 
regular local overrings of $D$.  We  explore the topology of the tree $Q(D)$ and the  
  family  $ \R(D)$  of rings   obtained as  intersections of rings in $Q(D)$. 
If $A$ is a finite intersection of rings in $ Q(D)$, then $A$ is Noetherian and the structure of $A$ is well
understood. 
 However, other rings in $\R(D)$ need not be Noetherian.
The   two main goals  of this paper are to  examine  topological  properties of  the quadratic tree  
$Q(D)$, and to   examine  the structure of  rings    in the  set $\R(D)$. 

\end{abstract}

\begin{keyword}
regular local ring \sep  quadratic transform \sep   quadratic tree \sep Zariski topology \sep patch topology 

\MSC 13A15 \sep 13C05 \sep 13E05 \sep 13H15




\end{keyword}

\end{frontmatter}

  \section{Introduction}  
  
  Let $D$ be a 2-dimensional regular local ring.
   Among the  overrings of  $D$   inside the field of 
fractions of $D$,  the rings   that are 2-dimensional regular local rings form a partially ordered set  $Q(D)$  
 with respect to inclusion.  
  For rings $\alpha$ and $\beta$ in $Q(D)$ with $\alpha \subseteq \beta$,  it is
  known from work of Abhyankar,  that the regular local rings  dominating $\alpha$ 
  and dominated by $\beta$ form a finite linearly ordered chain. Thus $Q(D)$ is a tree with respect to inclusion. 
  
  The   tree   $Q(D)$  reflects  ideal-theoretic properties of  the complete ideals of $D$ 
  that are primary for the maximal ideal $\m_D$.    Zariski's theory  of 
  complete ideals implies the existence of a one-to-one correspondence between the 
  elements in each of the following 3 sets:  
  \begin{enumerate}
  \item  The 
  simple complete $\m_D$  -primary ideals.
  \item 
 The rings in the quadratic tree $Q(D)$.  
 \item
 The order valuation rings of the rings in $Q(D)$.
 \end{enumerate}

Motivation to  examine properties of overrings of $D$ that are the
intersection of elements in the quadratic tree $Q(D)$  arises from  two  sources: 
\begin{enumerate}
\item
The beautiful structure of the tree $Q(D)$.
\item
Similarities between the intersections of elements in $Q(D)$   and  the representation of a Krull domain as 
the intersection of its essential valuation rings.
\end{enumerate} 

 Let $\R(D)$ denote the family of rings obtained as intersections of rings in $Q(D)$. 

\begin{remark}  \label{disc2.7}    The Noetherian rings in $\R(D)$ are all Krull domains. 
 Associated to a Krull domain 
$A$ is a unique set of DVRs,  the set $\E(A)$  of  essential valuation rings of $A$; 
 $\E(A) =  \{A_\p \}$,  where $\p$  varies 
over  the  
height 1 prime ideals  of $A$.  Two useful properties related to $\E(A)$   are:   
\begin{enumerate}
\item
$A = \bigcap \{ A_\p~ |~ A_\p \in \E(A) \}$ and the intersection is irredundant.
\item
The set $\E(A)$ defines an essential representation of $A$.
\end{enumerate} 
Since $D$ is a Noetherian ring of Krull dimension $2$, every Krull domain between $D$ and its quotient field is a Noetherian ring \cite[Theorem 9]{HKrull}.
\end{remark}

  A goal in  this paper    
  is to examine  topological  properties of  the quadratic tree  $Q(D)$, and the structure of  rings 
  in the  set $\R(D)$. 
In  the paper   \cite{HO}   a description is given of the   Noetherian rings in $\R(D)$. 
Subsets    $\U$  of $Q(D)$ that are  closed points of nonsingular   projective models 
over $D$ are examined. The rings obtained  are Noetherian rings that  can be  described in  detail.

For example, it is shown in \cite[Corollary~6.5]{HO} that if  $n$ is  a positive integer and 
  $R$ is an irredundant intersection of $n$ 
  elements in $Q(D)$, then  $R$ is a Noetherian regular domain with precisely $n$ maximal 
  ideals, each maximal ideal of $R$ is of height 2, and the localizations of $R$ at its maximal
  ideals are the $n$ elements in $Q(D)$ that intersect irredundantly  to define  $R$.

A  focus in the current  paper is to  describe non-Noetherian rings in $\R(D)$.    
Related to  this we  examine the topological structure of the quadratic tree $Q(D)$. 
  In   Sections \ref{sec4} and \ref{sec5},  we   investigate the topology of  Noetherian 
   subsets of $Q(D)$, find  the patch limits points and prove  that  Noetherian subsets  of $Q(D)$ 
   are  precisely the  subsets that  are bounded in the
   sense that they are the points of $Q(D)$ contained in  finitely  many  {dominating}   
   valuation overrings of $D$.
  Examples in Section 6 show that for Noetherian subsets $\U$ of $Q(D)$, the structure of rings $R = \O_\U$ is more complicated  than the situation with projective models. 

Theorem~\ref{noether down} establishes  that the subsets $\U$ of $Q(D)$ that are Noetherian subspaces of
 $Q(D)$ are 
 precisely the ones for which there exist  
finitely many valuation  overrings  $V_i $  of $D$  that dominate $D$ and are 
such that each ring in $ \U $ is contained in one of the  $V_i$.  
{The closed irreducible subspaces of $Q(D)$ are all Noetherian,  and are  in one-to-one 
correspondence with the dominating valuation overrings of $D$. 
 By Lemma~\ref{irreducible}, an  irreducible component   of $Q(D)$  is either:  
 \begin{enumerate}
 \item  
 the set of rings of $Q(D)$ contained in a
prime divisor of the second kind for $D$,  or
\item 
 the set of rings of $Q(D)$  contained in a minimal valuation overring  $V$ of $D$,  
 where $V$  is not a subring of a prime divisor of the second kind on $D$.  
\end{enumerate}  }
For $\alpha \in Q(D)$,  let $P(\alpha)$ denote the set of points proximate to $\alpha$.  A finite union of sets 
of the form $ P(\alpha_i)$
 is Noetherian, and so is the set of  the closed points of a   nonsingular projective model  over $D$.
 Also,  an infinite sequence of iterated local  quadratic transforms defines a  Noetherian subspace of $Q(D)$.  
  In this paper we  are moving beyond projective models and focusing on what can be said about intersections
   of rings in Noetherian  subsets of $ Q(D)$  with special emphasis on   sets of  points 
   proximate to finitely many elements of $Q(D)$.

Theorem~\ref{thm6.5} describes an infinite  subset $\U$ of $Q(D)$ such that $\U$ defines an 
irredundant essential representation of $B = \O_\U$, and $B$ is an 
almost Krull domain\footnote{An integral domain $B$ is said to be an almost Krull domain if $B_P$  is a 
  Krull domain for each prime ideal $P$ of $B$. } that is not Noetherian.  
  Corollary~\ref{cor4.13} implies that the constructed ring  $\O_\U$  is the intersection of an almost Dedekind 
  domain\footnote{An integral domain $A$ is an almost Dedekind domain if $A_P$ is a Dedekind 
  domain for each maximal ideal $P$ of $A$.}  and a principal ideal domain.  
  
  Let $V$ be a minimal valuation overring of $D$.  
  Theorem~\ref{th1.8} establishes the existence of    a 
  subset $\U$  of $Q(D)$ such that the  ring $\O_\U = C$  
  has the property that $V$
  is a localization of $C$.  If $V$ is chosen not to be a DVR,  then $C$ is not an almost Krull domain.

Theorem~\ref{thm6.4} describes a  non-Noetherian local domain in $\R(D)$.  This is  the 
most intricate example constructed in Section~\ref{sec6}.

\section{Notation and Terminology}

We mainly follow the notation used by Matsumura in \cite{Mat}.  Thus a local 
ring need not be Noetherian. An extension ring $B$ of an integral domain $A$ 
is said to be an {\it overring of} $A$ if $B$ is a subring of the field of
fractions of $A$.

For the definition of a quadratic transform, also called a local quadratic transform, 
 we refer to   \cite[pp.~569-577]{Ab8},  
\cite[p.~367]{ZS2} and \cite[p.~263]{SH}.\footnote{What
are called quadratic transforms in \cite{Ab8} and \cite{ZS2} are called local
quadratic transforms in \cite[p.~263]{SH}.}   
The powers of the maximal ideal of a regular local ring $R$ define a 
rank one discrete valuation ring denoted $\ord_R$.   If $\dim R = d$,  then the residue
field of $\ord_R$ is a pure transcendental extension of the residue field of $R$ of 
transcendence degree $d-1$. 

Complete  ideals, also called integrally closed ideals,  are defined and studied in  the book
of Swanson and Huneke \cite{SH}. Let $R$ be a Noetherian integral domain, and let $V$, with
maximal ideal $\m_V$,  be a   
valuation overring of $R$. Let  $\p =  \m_V\cap R$.  Following notation as in \cite[Definition 9.3.1]{SH},
$V$ is said to be a {\it divisorial } valuation ring with respect 
to $R$ if the transcendence degree of $V/\m_V$ over the field $R_\p/\p R_\p$ is $\hgt \p - 1$.
Every divisorial valuation ring with respect to $R$ is Noetherian \cite[Theorem~9.3.2]{SH}. 
Divisorial valuation rings are classically called {\it prime divisors} on $R$  \cite[p.~95]{ZS2}.
$V$  is a {\it prime divisor of the first kind} if $\p = \m_V \cap R$ has height one. If $\hgt \p > 1$,
then $V$ is said to be a {\it prime divisor of the second kind}.

\begin{notation and remarks} \label{note2.1}
Let $R$ be an integral domain and let $S$ be a local overring of $R$.
\begin{enumerate}
\item 
The {\it center}  of  $S$ on $R$  is the prime ideal $\m_S \cap R$, where $\m_S$ denotes the
maximal ideal of $S$.
\item
If $R$ is  a local ring,     then $S$ is said to {\it dominate} $R$ 
if  the center of   $S$  on $R$  is  the maximal ideal of $R$, that is, $\m_S \cap R = \m_R$, 
where $\m_R$ is the  maximal ideal of  $R$.
\item
A valuation overring $V$ of $R$ is said to be a {\it minimal valuation overring} of $R$ if 
$V$ is minimal with respect to set-theoretic inclusion in the set of valuation overrrings of $R$.
\item
If $W$ is a valuation overring of $R$ and the center of $W$ on $R$ is a nonmaximal prime ideal,
then by composite construction \cite[p.~43]{Nag}, there exists a valuation overring $V$ of $R$ 
such that $V \subset W$ and  $V$ is centered on a maximal ideal of $R$.  

\item
 Assume that  $R$ is local.  Every valuation overring of $R$ contains a valuation 
 overring of $R$ that   dominates $R$.
  If $W$ is a valuation overring of $R$ that dominates $R$ and the 
 field $W/\m_W$ is not algebraic over $R/\m_R$, then by composite construction, there exists a 
 valuation overring $V$ of $R$ such that $V$ dominates $R$ and $V \subsetneq W$. 
 
 \item
 Assume that $R$ is Noetherian and local. A valuation overring $V$ of $R$ is a minimal
 valuation overring of $R$ if and only if $V$ dominates $R$ and the field $V/\m_V$ is 
 algebraic over the field $R/\m_R$.   Let $X(R)$ denote the set of minimal valuation overrings of the
 Noetherian local domain $R$. 
 
 \item
 Assume that $R$ is a regular local ring and $V$ is a prime divisor on $R$ that dominates $R$.
 Abhyankar proves in \cite[Prop.~3]{Ab1} 
  that there exists a unique  finite sequence
\begin{equation} \label{1}
R ~ =  ~R_0  ~\subset ~ R_1 ~ \subset \cdots \subset ~ R_{h}  ~\subset ~ R_{h+1} ~= ~ V
\end{equation}
of regular local rings $R_j$, where  $\dim R_h \ge 2$ and    $R_{j+1}$ is a 
 local quadratic transform  of
$R_{j}$ along $V$   for each $j \in \{0, \ldots, h \}$,
and $\ord_{R_{h} }  = V.$   The association of the prime divisor $V$ with the regular local ring $R_h$
in Equation~\ref{1}, and the uniqueness of the sequence in Equation~\ref{1} establishes a one-to-one 
correspondence between the prime divisors $V$ dominating $R$ and the regular local rings $S$ of
dimension at least 2 that dominate $R$ and are obtained from $R$ by a finite sequence of local quadratic
transforms as in Equation~\ref{1}.  The regular local rings $R_j$ with $j \le h$ in Equation~\ref{1} 
are called the {\it infinitely near points} to $R$ along $V$. 
 In general, a regular local ring $S$ of dimension at least 2  is
called an {\it infinitely near point} to $R$ {\it of level} $h$  if
there exists a sequence
$$
R ~ =  ~R_0  ~\subset ~ R_1 ~ \subset \cdots ~ \subset ~ R_{h}~ = ~ S,  \quad h ~\ge ~ 0
$$
of regular local rings $R_j$ of dimension at least 2,   where   $R_{j+1}$ is a
local quadratic transform  of
$R_{j}$   for each $j$ with $0 \le j \le h-1$
\cite[Definition~1.6]{L}.
 \item
 Assume that $R$ is a regular local ring with $\dim R = 2$, and 
 $S$ is a regular local overring of $R$.
 \begin{enumerate}
 \item  
 If  $\dim S \ge 2$, then $\dim S = 2$ and $S$ dominates $R$.
 \item
 If $\dim S = 2$ and $S \ne R$,  then $\m_R S$ is a proper principal ideal of $S$.
 It follows that  $S$ dominates a unique local quadratic transform $R_1$ of $R$.
 Moreover,  
 there exists for some positive integer $h$ a sequence
$$
R~= ~ R_0 ~\subset ~R_1 ~\subset ~ \cdots ~ \subset R_{h} ~= ~ S,
$$
where $R_j$ is a local quadratic transform  of $R_{j-1}$ for each $j \in \{1, \ldots, h\}$.
The rings $R_j$ are precisely the regular local domains  that are subrings of $S$ and contain
$R$ \cite[Theorem~3]{Ab1}. Every 2-dimensional regular local overring $S$ of $R$ is an 
infinitely near point to $R$.  Each $V \in X(R)$ is the union of the infinite quadratic sequence 
of $R$ along $V$ \cite{Ab1}.

\item  The Zariski theory about the unique factorization of  the complete ideals of 
the 2-dimensional regular local ring $R$ yields a one-to-one correspondence between the
following 3 sets: 
\begin{enumerate}
\item
 the  simple complete 
$\m_R$-primary ideals,
\item
  the  infinitely near points to $R$,  and 
 \item
  the prime divisors that 
dominate $R$.  
\end{enumerate}
See   \cite[Appendix 5]{ZS2} and \cite{Hun}.   For each infinitely near point $S$ to $R$, 
$\ord_S$ is a divisorial valuation ring with respect to $R$ that dominates $R$,  and $\ord_S$ is the 
unique Rees valuation ring of a unique simple complete $\m_R$-primary ideal.

 \end{enumerate} 
\end{enumerate} 
\end{notation and remarks}

   \section{The quadratic tree  of a 2-dimensional regular local ring}  \label{sec3}
   
   
  The following notation will be used throughout the rest of the article.

 \begin{notation} \label{note3.1} 
  Let $D$ be a $2$-dimensional regular local ring with quotient field $F$  and maximal ideal $\m_D = (x, y)D$.
  \begin{enumerate}
  \item
   Let $Q(D)$ denote the 
  set of all 2-dimensional regular local overrings of $D$.
   As    noted in Remark~\ref{note2.1}.8,  the rings in $Q(D)$ are infinitely near points to $D$. 
  We call $Q(D)$ the {\it quadratic tree} determined by $D$. 
  \item
   Regarded as a partially ordered set 
  with respect to inclusion, 
  $Q(D)$ is a tree,  and is  
  the disjoint union of subsets $Q_j(D)$, $j \ge 0$,  where $Q_0(D) = \{D\}$, and $Q_j(D)$ for
  $j \ge 1$ is the  set of infinitely near points to $D$ of level $j$ as in Remark~\ref{note2.1}.8. 
  \item
     As in  Lipman \cite{L},  it is often convenient to denote rings in $Q(D)$ 
  with lower case Greek letters.  For $\alpha \in Q(D)$,  let  $Q(\alpha)$ denote the
  quadratic tree determined by $\alpha$.
  For each subset $\U$ of $Q(D)$, let $\O_\U =  \bigcap_{\alpha \in \U}\alpha$. In case $\U$ is the empty set, we define $\O_\U = F$.  
  
  \item If $\alpha \subseteq \beta$ in $Q(D)$, then $\beta$ is {\it proximate} to $\alpha$ if $\beta$ is a subring of the order valuation ring,   $\ord_{\alpha}$,  of $\alpha$.   Let $P(\alpha)$ denote the
  set of all $\beta$ proximate to $\alpha$.

  \item
   Let $\R(D)$ denote the set of rings of the form $\O_\U$ for some subset $\U$ of $Q(D)$. 
  
  \end{enumerate}
  \end{notation}

  Proposition~\ref{prop1.6} records properties of the quadratic tree $Q(D)$ and the set $X(D)$ of   
   minimal valuation overrings of $D$.
  
\begin{proposition} \label{prop1.6}   Assume Notation~\ref{note3.1}.  
\begin{enumerate}
\item  If $\alpha \in Q(D)$ properly contains $D$,  
then there exists a unique positive integer~$j$ such that $\alpha \in Q_j(D)$.
 The regular local 
  rings between $D$ and $\alpha$ are linearly ordered with respect to inclusion and form a chain of
  length $j$. 
\item
Each local ring $\alpha \in Q(D)$ is essentially finitely generated over $D$, that is,
there exist finitely many elements $x_1, \ldots, x_n \in \alpha$  such that $D[x_1, \ldots, x_n]_\p = \alpha$,
where $\p$ is the center of $\alpha$ on $D[x_1, \ldots, x_n]$.

 \item 
 If $\alpha,\beta \in Q(D)$, then the following are equivalent:
 \begin{enumerate}
 \item  $\alpha$ is a subring of $\beta$. 
 \item  $\alpha$ is in   the chain of regular local rings from $D$ to $\beta$.
 \item As points on the quadratic tree  $Q(D)$,  $\alpha \le \beta$.
 \end{enumerate}
\item
The valuation domains in 
 $X(D)$ are paired in a one-to-one correspondence  with the branches of the quadratic tree $Q(D)$.
 Each $V \in X(D)$ is the union of the quadratic sequence of $D$ along $V$.\footnote{In the terminology of 
 \cite{HLOST}, $V$ is a Shannon extension of $D$.}
\end{enumerate}

\end{proposition}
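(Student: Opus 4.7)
The plan is to deduce each of the four assertions by assembling the Abhyankar-type facts collected in Remark~\ref{note2.1}, especially items (7) and (8), and doing the obvious bookkeeping.

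For part (1), since $\alpha \in Q(D)$ properly contains $D$, Remark~\ref{note2.1}(8a) forces $\dim \alpha = 2$ and $\alpha$ to dominate $D$; Remark~\ref{note2.1}(8b) then produces a finite sequence $D = R_0 \subset R_1 \subset \cdots \subset R_j = \alpha$ of local quadratic transforms whose intermediate terms are precisely the regular local rings between $D$ and $\alpha$. The uniqueness of this sequence built into Remark~\ref{note2.1}(8b) gives uniqueness of the level $j$, and the linear order of the intermediate rings is then immediate. For part (2), I would observe that a single local quadratic transform $R_i \subset R_{i+1}$ is essentially of finite type — if $t \in \m_{R_i}$ has $\ord_{R_i}(t)=1$, then $R_{i+1}$ is a localization of the finitely generated $R_i$-algebra $R_i[\m_{R_i}/t]$ at an appropriate prime — so composing the finitely many such extensions along the chain of part (1) yields $\alpha = D[x_1,\ldots,x_n]_\p$ for finitely many $x_1,\ldots,x_n \in \alpha$ and the center $\p$ of $\alpha$ on $D[x_1,\ldots,x_n]$.

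For part (3), the equivalence (b) $\Leftrightarrow$ (c) is essentially the definition of the partial order on the tree $Q(D)$ given in Notation~\ref{note3.1}(2), once part (1) has been established. The implication (b) $\Rightarrow$ (a) is immediate from the containments in the chain. For (a) $\Rightarrow$ (b), suppose $\alpha \subseteq \beta$. Then $\beta$ is a $2$-dimensional regular local overring of $\alpha$, so by Remark~\ref{note2.1}(8a) $\beta$ dominates $\alpha$ and by Remark~\ref{note2.1}(8b) there is a unique chain of local quadratic transforms from $\alpha$ to $\beta$. Concatenating this with the unique chain from $D$ to $\alpha$ produced in part (1), and applying the uniqueness clause of Remark~\ref{note2.1}(8b) to the chain from $D$ to $\beta$, forces $\alpha$ to appear in the latter chain.

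For part (4), the central input is the theorem of Abhyankar cited in Remark~\ref{note2.1}(8b) and at the end of the same remark: each $V \in X(D)$ arises as the union of its (uniquely determined) infinite quadratic sequence $D = R_0 \subset R_1 \subset \cdots$ in $Q(D)$. Conversely, a branch of $Q(D)$ is by part (1) precisely an infinite ascending chain $D = R_0 \subset R_1 \subset \cdots$ of iterated local quadratic transforms, and Abhyankar's theorem identifies the union $\bigcup_i R_i$ as an element of $X(D)$. The uniqueness in part (1) makes this assignment well-defined on branches, and the two constructions are mutually inverse, giving the bijection.

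The bulk of the argument is routine bookkeeping; the one place where genuine content enters is part (4), where I am relying on Abhyankar's nontrivial theorem that the union of an infinite local quadratic sequence starting at $D$ is in fact a minimal valuation overring of $D$. Everything else reduces to repeated invocations of the uniqueness statement in Remark~\ref{note2.1}(8b).
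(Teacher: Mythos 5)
Your proposal is correct and follows essentially the same route as the paper, whose proof consists of the single line that the assertions follow from Remark~\ref{note2.1}; you have simply made explicit the bookkeeping with items (7) and (8) (uniqueness of the quadratic sequence, concatenation, and Abhyankar's theorem that an infinite quadratic sequence unions to a ring in $X(D)$) that the paper leaves implicit.
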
 

\begin{proof}  These assertions follow from Remark~\ref{note2.1}.    
\end{proof}

\begin{remark} \label{rm3.3}
Let $R \in \R(D)$ and let 
$\U' = \{\alpha \in Q(D) ~|~ R \subseteq \alpha \}$.  
Let $\U$ 
denote the subset of $\U'$   of minimal points of  $\U'$.
Then the points in $\U$ are incomparable and $\O_\U = \O_{\U'}$.
Therefore: 
\begin{enumerate}
\item
Each ring $R \in \R(D)$ has the form 
$R = \O_\U$, where the $\alpha \in \U$ are incomparable and 
are minimal among points of $Q(D)$ that contain $R$. 
\item
If $D$ is Henselian and $\U$ is as defined in item 1,  
then it is shown  in \cite{HO} 
that the representation $R = \bigcap_{\alpha \in \U} \alpha$ 
is irredundant. 
\end{enumerate}
\end{remark}

\begin{remark}  \label{rm3.4}  
For $\alpha \in Q(D)$ the set $P(\alpha)$ of points proximate to $\alpha$ is by definition
$$
P(\alpha) ~ = ~  \{\beta \in Q(D) ~|~ \alpha \subseteq \beta \text{ and } \beta \subset \ord_{\alpha} =: V \}.
$$
The  points in $Q_1(\alpha)$ are all proximate to $\alpha$.  
For each $\beta \in Q_1(\alpha)$,  there exists 
a unique chain in $Q(\beta)$  of points in $P(\alpha)$  that may be described as follows: 
\begin{enumerate}
\item 
 The center 
of $V$ on $\beta$ is a height-one regular prime $\p = z\beta$, for
some $z \in \beta$. Let $w \in \m_{\beta}$ be 
such that $\m_{\beta} = (z, w)\beta$.  
Then $\beta[z/w]$ and $\beta[w/z]$ are affine components that define
 the 
blowup of $\m_{\beta}$.   Since $\beta[w/z]$ is not contained 
in $V$,  the only point of 
$Q_1(\beta)$ that is contained in $V$ is $\beta[z/w]_{(w, z/w)\beta}$,  and $V$ is 
centered on the height-one regular prime of this ring 
generated by $z/w$.
\item 
The process iterates to give as the union the rank 2 valuation domain contained in $V$ obtained 
by composite center  construction with  respect to the residue class  ring  $\beta/z\beta$.
\item
In summary,  the set $P(\alpha)$ consists of all the points in 
the first neighborhood $Q_1(\alpha)$ of $\alpha$ together with an
 infinite ascending ray emanating  from each 
point $\beta \in Q_1(\alpha)$.  The rays are in 
one-to-one correspondence with the points in $Q_1(\alpha)$.  

Corollary~\ref{noether down} implies that the set $P(\alpha)$  of points proximate to $\alpha$ is a 
Noetherian subspace of $Q(D)$ in the Zariski topology.

\item
It is a classical fact that an infinitely near point $\gamma$ to $D$
is proximate to at most 2 points of $Q(D)$. An argument for 
this  in a more general setting is  given in the proof of 
\cite[Lemma 2.7]{HLOST}.  

\item
{Let $\gamma \in Q(D)$ with $\gamma \ne D$.  Then there exists a unique point  $\alpha \in Q(D)$ such 
that $\gamma \in Q_1(\alpha)$. 
 The set $\E(\gamma)$ of essential valuation rings for
$\gamma$ contains $\ord_{\alpha}$ and 
at most one other  element that is  not in $\E(D)$ the set of essential valuation 
rings of $D$.  

\item
Let $V \in \E(D)$,  then $V = D_\p$,  where $\p$ is a height 1 prime of $D$.  The rings $\gamma$ 
 in $ Q(D) $   that are 
contained in $V$     are determined by the transform of $\p$ in $\gamma$ 
as defined by Lipman in \cite{L}.    The transform of $\p$ in $\gamma$ is either $\gamma$ or a 
height 1 prime $\q$,  where $\q \cap D = \p$ and $D_\p = \gamma_\q = V$.
If the transform of  $\p$ in $\gamma$ is  a prime ideal    $\q$,  then the finite sequence of 
 quadratic transforms from $D$  to $\gamma$ induces a finite sequence of local quadratic transforms of the 
   local domain $D/\p$ to the local domain $\gamma/\q$,  cf. \cite[Corollary II.7.15, p. 165]{H} and 
    \cite[Proposition 4.1]{GHOT}.

\item
The integral closure of $D/\p$ is a finite intersection of DVRs. 
 Associated to each of these DVRs, by composite construction, there exists a rank 2 valuation 
 overring $W$ of $D$.   Then $W \in X(D)$,  the set of minimal valuation overrrings of $D$.  Hence
 $W$ determines a unique branch in the quadratic tree $Q(D)$.   The points $\gamma \in Q(D)$ such
 that $\gamma \subset D_\p$ are the points in one of these branches. 
 
 \item 
 In summary,   if the integral closure of $D/\p$ has $t$ maximal ideals, then there exists for each 
 positive integer $n$ at most $t$ points in $Q_n(D)$ that are contained in $V = D_\p$. 
 There are $t$ branches in $Q(D)$ uniquely determined by $D_\p$,  
  and the points on these branches are precisely the points of $Q(D)$
 that are contained in $V = D_\p$. 
 }\footnote{If $D$ is not pseudo-geometric,  the integral closure of $D/\p$ may not be finite, but if the 
 integral closure of $D/\p$  has $t$ maximal ideals, then there is  a finite integral extension of $D/\p$ 
 that has $t$ maximal 
 ideals.}

\end{enumerate}

\end{remark}

\begin{discussion} \label{disc4.2}
For nonzero elements $f$ and $g$ in $D$,  the rational function $f/g$ has a  position in 
each  element of the quadratic tree $Q(D)$. We use the following terminology:
\begin{enumerate}
\item
$f/g$ has a  {\it zero}   at $\alpha \in Q(D)$ if $f/g \in \m_{\alpha}$,  the maximal ideal of $\alpha$.
\item
$f/g$ has a  {\it  pole}   at $\alpha \in Q(D)$ if $g/f   \in \m_{ \alpha} $.   
\item
$f/g$ is a  {\it unit } at $\alpha$ if both $f/g \in \alpha$ and $g/f  \in \alpha$.
\item 
$f/g$ is {\it undetermined}  at $\alpha$ if both $f/g \not\in \alpha$ and $g/f \not\in \alpha$.
\end{enumerate}
  It is clear
that if $f/g$  has a zero or pole, or is  a unit at  $\alpha$,  then $f/g$ has, respectively, a zero or pole or is 
 a unit in each $\beta \in Q(D)$ that dominates $\alpha$.

 Since the elements in $Q(D)$ are not valuation rings,  many rational functions  $f/g$ are such that 
 $f/g$ is undetermined at $\alpha$.     
 The rational function $f/g$ is undetermined at $\alpha$ if and 
 only if $g/f$ is undetermined at  $\alpha$. 
 In this case,    it is natural to  consider  $f/g$ in the points of $Q_1(\alpha)$. 
 
  Since $\alpha$ is a UFD, we may  assume that $f$ and $g$ have no common 
 prime  factors in $\alpha$.   A  prime factor   of $f$   in $\alpha$  
 generates  a  height 1 prime $\q$  of $\alpha$.  Since $D \subsetneq \alpha$,  there exists  a nonempty 
 finite set  of cardinality at most 2   of height 1 primes of $\alpha$ that contain $\m_D$.   If 
 $\q \cap D  = \m_D$, then $\alpha_\q$ is the order valuation ring of one of the points  $\beta$  in  the 
 finite sequence from $D$ to $\alpha$,  and $\alpha$ is proximate to $\beta$. 
 
 If $\q \cap D = \p$ is a height 1 prime of $D$,  then $D_\p = \alpha_\q$.  As noted in Remark~\ref{rm3.4}, 
 the finite sequence of 
 quadratic transforms from $D$  to $\alpha$ induces a finite sequence of local quadratic transforms of the 
   local domain $D/\p$ to the local domain $\alpha/\q$.  
This can be helpful in describing the zeros  and poles with respect to the points of $Q(D)$ 
of a rational function $f/g$.    

\end{discussion}


  Example~\ref{exam4.3} illustrates how to compute the position of a specific rational function
in the quadratic tree. Transforms of the ideal  $(f, g)D$ as defined  in \cite{L} are useful 
for such computations.      

\begin{example}  \label{exam4.3} 
Assume terminology as in Discussion~\ref{disc4.2}.  Let 
$$ 
f = xy \quad \text{  and  }  \quad g = y^2 + x^3.
$$
Then $f/g$  is undetermined  at $D$.  Every point of $Q_1(D)$ is a 
localization of either $D[y/x]$ or of $D[x/y]$,  and all the points
of $Q_1(D)$ other than  $D[y/x]_{(x, y/x)}$ and $D[x/y]_{(y, x/y)}$ are 
localizations of both  $D[y/x]_{(x, y/x)}$ and $D[x/y]_{(y, x/y)}$.

 Consider  
$D[\frac{y}{x}]$ and let $y_1 = \frac{y}{x}$.
Then $y = xy_1$ and 
$$
\frac{f}{g} ~ =   ~ \frac{xy}{y^2 + x^3} ~=   ~\frac{x^2y_1}{x^2y_1^2  + x^3}   ~ 
=    ~\frac{y_1}{y_1^2  + x}. 
$$  
It follows that $f/g$ is undetermined   at 
the point $\alpha := D[y/x]_{(x, y/x)}$,  the point in $Q_1(D)$ 
with maximal ideal generated by $(x, y_1)$. 

Consider $\beta : = \alpha[y/x^2]_{(x,  y/x^2)}$,  the point in $Q_2(D)$ with maximal 
ideal generated by $(x, y_2)$,   where  $y_2 = \frac{y_1}{x}$.   Then $y_1 = xy_2$ and
$$
\frac{f}{g} ~ =   ~ \frac{xy}{y^2 + x^3} ~ =    ~\frac{y_1}{y_1^2  + x}  ~=~ \frac{xy_2}{x^2y_2^2 + x}  ~
= ~ \frac{y_2}{xy_2^2 + 1}.
$$
It follows that $f/g$ has a zero at $\beta$.

Consider $D[x/y]$ and let $x_1 = x/y$.  Then $x = yx_1$  and
$$
\frac{f}{g} ~ = ~ \frac{xy}{y^2 + x^3} ~ = ~
\frac{y^2x_1}{y^2  + y^3x_1^3}  ~  =    ~\frac{x_1}{1   + yx_1^3}.
$$
If follows that $f/g$   has a zero at  $\gamma   : =  D[x/y]_{(y, x/y)} $,  the point in $Q_1(D)$ with maximal 
ideal generated by $(y, x_1)$.

We conclude that $\frac{f}{g}  = \frac{xy}{y^2 + x^3}$ has two  ``distinguished zeros" ,   namely 
$\beta \in Q_2(D)$ and 
$\gamma \in Q_1(D)$.   The points $\beta$ and $\gamma$ are incomparable and the points in the
quadratic tree $Q(D)$ at which $f/g$ has a zero are precisely the points that dominate either $\beta$
or $\gamma$.  

The local ring $R :=  D[f/g]_{(x, y, f/g)}$ is dominated by $\beta$ and $\gamma$.   The 
integral closure of the ideal $I = (xy,   y^2 + x^3)D$ is the product $J$  of the simple complete
ideals $(x, y)D$ and $(y, x^2)D$.   The complete ideal  $J = (xy, y^2, x^3)D$  has a saturated factorization
as defined in \cite[Definition 5.11]{HJLS},  
and the projective model $\Proj D[Jt]$ is nonsingular with $\beta$ and $\gamma$ points on this model.

The integral closure of $R$  is $R[\frac{y^2}{y^2 + x^3},   \frac{x^3}{y^2 + x^3}] =   \beta \cap \gamma$. 

To compute the poles of $f/g$, define $S = D[\frac{y^2 + x^3}{xy}]_{(x, y, \frac{y^2 + x^3}{xy})}$.   The rings
in $Q(D)$ that dominate the local ring $S$ are the zeros of $g/f$ and therefore the poles of $f/g$. 
The affine component $ A : = D[\frac{y^2}{xy}, \frac{x^3}{xy}]$ of $\Proj D[Jt]$ contains
 $D[\frac{y^2 + x^3}{xy}]  = D[\frac{y}{x} + \frac{x^2}{y}]$ as 
a subring.  The points in the quadratic tree $Q(D)$ at which $f/g$ has a pole are precisely 
the points that dominate $\delta : = D[\frac{y}{x}][\frac{x^2}{y}]_{(\frac{y}{x},  \frac{x^2}{y})}$.
Notice that $\delta \in Q_2(D)$ and $\delta$ is the integral closure of $S$. Thus $\delta$ is the 
unique ``distinguished pole''  of $f/g$.

\end{example} 

\section{The patch topology of $Q^*(D)$}  \label{sec4}

In this section and the next we examine topological properties of the tree $Q(D)$ and the partially ordered set $Q^*(D)$ consisting of $Q(D)$ and the valuation rings birationally dominating $D$.  We first describe the patch topology of $Q^*(D)$, a topology that is finer than the Zariski topology. As follows in Remark~\ref{recover}, properties of the Zariski topology, which are the focus of the next section, can be derived from the patch topology, so our approach in this section is to focus on the patch limit points of subsets of  $Q^*(D)$ and use this description  in the next section to describe properties of the Zariski topology of $Q^*(D)$.  
 The patch topology is a common tool for studying the Zariski-Riemann space of valuation rings of a field; see for example \cite{FFL, FFL2, Kuh, OZar, OTop, OPrin, OTop2}.
 
 Our methods in this section require us to work occasionally not just in $Q^*(D)$ but also in the set of all local rings birationally dominating $D$.  We formalize our notation for this section and the next as follows.

 \begin{notation} \label{not1.3}
 Let $D$ be a regular local ring of dimension 2  and 
 let $F$ denote the quotient field 
 of $D$.
 \begin{enumerate}

 \item
 Let $L(D)$ denote the set of all local rings that birationally dominate $D$. Similarly, for $\alpha \in Q(D)$, $L(\alpha)$ is the set of local rings that birationally dominate $\alpha$.  

\item For each $x_1,\ldots,x_n \in F$, we let  
 $$
{\mathcal{U}}(x_1,\ldots,x_n)=\{R \in L(D) ~|~ x_1,\ldots,x_n \in R\},
$$  
and
$$
{\mathcal{V}}(x_1,\ldots,x_n):=\{R \in L(D)~|~x_1,\ldots,x_n {\mbox{ are not all in }}  R\}.$$ 
Thus $\V(x_1, \ldots, x_n)  = L(D) ~\setminus ~ \U(x_1, \ldots , x_n)$.

\item The {\it Zariski topology} on $L(D)$ is the topology having a basis of open sets given by the sets  ${\mathcal{U}}(x_1,\ldots,x_n)$, where $x_1,\ldots,x_n \in F$.  
 Thus a basis of closed sets is given by sets  
$
{\mathcal{V}}(x_1,\ldots,x_n)$, where $x_1,\ldots,x_n \in F$.

 \item {As in Notation~\ref{note2.1}.6}, we denote by 
 $X(D)$ the subset of $L(D)$ consisting  of the minimal valuation
 overrings of $D$. 
 
 \item We let $Q^*(D)$ be the union of $Q(D)$ and the set of valuation rings in $L(D)$. Thus $Q^*(D)$ is the union of the set $Q(D)$, the set $X(D)$ and the set of prime divisors of $D$ of the second kind. 
 
 \item {We extend the definition of $\O$ given in Notation~\ref{note3.1}.3 to $Q^*(D)$ by defining, for each
  subset $\U$ of $Q^*(D)$,  $\O_\U$ to be the intersection of the rings in $\U$. If $\U$ is empty, then 
  we set $\O_\U = F$.  }


 \end{enumerate}

 \end{notation} 

The set $Q^*(D)$ is partially ordered by inclusion. While the subset $Q(D) \cup X(D)$ of $Q^*(D)$ is a tree by Proposition~\ref{prop1.6}.4, the set $Q^*(D)$ is not. This is because a prime divisor of $D$ 
of the second kind contains infinitely  many  valuation rings in $X(D)$.

We prove in Corollary~\ref{Q* spectral} that $Q^*(D)$ with the Zariski topology is a  {\it spectral space}.  This means  that it is $T_0$ and quasicompact, it has a basis of quasicompact open sets closed under finite intersections, and every irreducible closed set has a (unique) generic point\footnote{A  theorem of Hochster \cite[Corollary, p.~45]{Hoc} shows that a topological space is spectral if and only if it is homeomorphic to the prime spectrum of a ring.}. 
This is useful not only because it situates the topology of $Q^*(D)$ in an appropriate context,
but because  it   also allows us to use some of the tools for working with spectral spaces. 

The proof that $Q^*(D)$ is a spectral space involves proving that it is a patch closed subset of a larger spectral space, namely $L(D)$.  We recall that the   {\it patch topology}\footnote{The patch topology is sometimes referred to as the constructible or Hausdorff topology of a spectral space.}  of a spectral space $X$ has as a basis of open sets the sets that are an intersection of a quasicompact open set and the complement of a quasicompact open set. 
The patch topology of a spectral space is quasicompact, Hausdorff and has a basis of sets that are both closed and open; {see \cite[Theorem 1]{Hoc}.}

\begin{proposition} \label{L(D) spectral} 
The space $L(D)$  with the Zariski topology is a spectral space having as a basis of quasicompact open sets the sets of the form $\U(x_1,\ldots,x_n)$, where $x_1,\ldots,x_n \in F$.  
\end{proposition}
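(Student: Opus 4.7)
The plan is to embed $L(D)$ into the product space $\{0,1\}^F$ (compact Hausdorff by Tychonoff), show the image is closed, and then invoke Hochster's characterization of spectral spaces in terms of the patch topology.

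First I would define $\varphi \colon L(D) \to \{0,1\}^F$ by $\varphi(R)(x) = 1$ if $x \in R$ and $\varphi(R)(x) = 0$ otherwise. Injectivity is clear, since a subring of $F$ is determined by which elements of $F$ it contains. The main step is to show that $\varphi(L(D))$ is closed in $\{0,1\}^F$. This reduces to expressing each requirement for ``$R$ is a local ring birationally dominating $D$'' as a closed condition on finitely many coordinates: (a) $D \subseteq R$, i.e.\ the coordinate at $d$ equals $1$ for each $d \in D$; (b) $R$ is a subring of $F$, i.e.\ whenever the coordinates at $r, s \in F$ both equal $1$, so do the coordinates at $r+s$, $-r$, and $rs$; (c) $\m_D \subseteq \m_R$, i.e.\ the coordinate at $1/x$ equals $0$ for each nonzero $x \in \m_D$; and (d) $R$ is local, which can be phrased (using the fact that a ring is local iff $r$ or $1-r$ is a unit for each $r$) as: for every $r \in F \setminus \{0,1\}$, if the coordinate at $r$ equals $1$, then the coordinate at $1/r$ equals $1$ or the coordinate at $1/(1-r)$ equals $1$. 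Each of these is a (finite-coordinate) closed condition on $\{0,1\}^F$, so $\varphi(L(D))$ is closed and therefore compact.

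Pulling the subspace topology back to $L(D)$ yields a compact Hausdorff topology whose basic opens have the form $\{R \in L(D) : x_1,\ldots,x_n \in R,\ y_1,\ldots,y_m \notin R\}$; this is precisely the patch topology generated by the Zariski basic opens $\U(x_1,\ldots,x_n)$ and their complements $\V(x_1,\ldots,x_n)$. To finish I would verify the hypotheses of Hochster's criterion: $L(D)$ is $T_0$ (distinct rings are separated by some $\U(x)$); the family $\{\U(x_1,\ldots,x_n) : x_i \in F\}$ is closed under finite intersections (since $\U(x_1,\ldots,x_n) \cap \U(y_1,\ldots,y_m) = \U(x_1,\ldots,x_n,y_1,\ldots,y_m)$) and includes $L(D)$ as the empty intersection; and each $\U(x_1,\ldots,x_n)$ is clopen in the compact Hausdorff patch topology, hence patch-compact, hence quasicompact in the coarser Zariski topology. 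Hochster's theorem \cite{Hoc} then yields that $L(D)$ is a spectral space with the $\U(x_1,\ldots,x_n)$ forming the asserted basis of quasicompact open sets. The main obstacle is the verification in step (d) that the local-ring axiom is a closed condition in the product topology; once it is recognized as a finite disjunction of coordinate-fixing conditions, the rest of the argument is the by-now standard patch-topology embedding used throughout the Zariski--Riemann literature cited above.
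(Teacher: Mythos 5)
Your argument is correct, but it follows a genuinely different route from the paper. The paper works inside the larger space $L'(D)$ of \emph{all} local overrings of $D$, quotes the known fact that the sets $\U'(x_1,\ldots,x_n)$ of local overrings containing prescribed elements are spectral (citing Finocchiaro--Fontana--Spirito and Olberding), and then reduces everything to showing that each $\U(x_1,\ldots,x_n)$ is \emph{patch closed} in $L'(D)$: a patch limit point $R$ of $\U(x_1,\ldots,x_n)$ is shown to dominate $D$ via the patch open set $\bigcup_{u\in R^\times}\bigl(\U'(u)\cap \U'(u^{-1})\bigr)$ and to contain the $x_i$, after which the result follows because a patch closed subspace of a spectral space is spectral. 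You instead unfold those black boxes: the embedding $R\mapsto \chi_R$ into $\{0,1\}^F$, the verification that ``subring containing $D$,'' ``local'' (via the $r$ or $1-r$ unit criterion), and ``dominating'' (via $1/x\notin R$ for $0\neq x\in\m_D$, which is the same closedness of domination the paper proves by the units argument) are all finite-coordinate closed conditions, and then a compactness criterion for spectrality. What your approach buys is a self-contained proof not resting on the cited overring-space results; what the paper's buys is brevity and, as a by-product, the placement of $L(D)$ and each $\U(x_1,\ldots,x_n)$ as patch closed subsets of $L'(D)$, a technique (computing patch limit points) reused later, e.g.\ in Theorem~\ref{patch closure}. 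One caution: the checklist you verify explicitly ($T_0$, basis of $\U(x_1,\ldots,x_n)$ closed under finite intersections, quasicompactness from clopenness in the finer compact topology) omits sobriety, which is part of the paper's definition of spectral; it is supplied by the criterion you invoke, but you should either cite a precise statement of that criterion (Hochster, or the ultrafilter/patch criteria in the Finocchiaro--Fontana--Loper papers) or include the short argument: for an irreducible Zariski closed set $C$, the sets $C\cap \U(x_1,\ldots,x_n)$ that are nonempty are patch closed and have the finite intersection property, so patch compactness gives a point of $C$ lying in all of them, and that point is the generic point of $C$.
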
 

\begin{proof} For the purpose of this proof, we introduce additional notation that will not be needed later. We denote by  $L'(D)$ the set of  local overrings of $D$.  Thus $L'(D)$ consists of the rings in $L(D)$ and the local overrings of $D$ that do not dominate $D$; these latter rings are precisely the essential valuation rings of $D$.
 For $x_1,\ldots,x_n \in F$, we let 
$$\
\U'(x_1,\ldots,x_n)= \{R \in L'(D) ~|~ x_1,\dots,x_n \in R\}.
$$ 
 Thus $\U(x_1,\ldots,x_n) = \U'(x_1,\ldots,x_n) \cap L(D)$. 
 The sets of the form $ \U'(x_1,\ldots,x_n)$ yield a basis for a topology on $L'(D)$, which we refer to as the Zariski topology on $L'(D)$.  Viewed as a subspace of $L'(D)$, $\U'(x_1,\ldots,x_n)$ is a spectral space (see  \cite[Corollary 2.14]{FFS} or \cite[Example 2.2(7)]{OTop}). In particular $\U'(x_1,\ldots,x_n)$ is quasicompact. Since $L'(D) = \U'(1)$, we have also that $L'(D)$ is a spectral space. 
 We make use of these facts in what follows.

We claim that for  $x_1,\ldots, x_n \in F$, the subspace $\U(x_1,\ldots,x_n)$ of $L(D)$ is a patch closed subset of the spectral space $L'(D)$.   
To prove this, it suffices to show that each patch limit point of $\U(x_1,\ldots,x_n)$ in $L'(D)$ is 
in $\U(x_1,\ldots,x_n)$.  
 Let $R$ be a patch limit point of  $\U(x_1,\ldots,x_n)$ in $L'(D)$. To prove that $R$
 is in $\U(x_1,\ldots,x_n)$, it suffices to show $R$
  dominates $D$ and $x_1,\ldots,x_n \in R$.   
  
  
  To prove that $R$ dominates $D$, let
 $R^\times$ denote the set of units in $R$.  Then the set $${\mathcal{U}}':=\bigcup_{u \in R^\times} \left( {\mathcal{U}}'(u) \cap {\mathcal{U}}'(u^{-1}) \right) $$ is open in the patch topology of $L'(D)$ and contains $R$.  Since $R$ is  a patch limit 
  point of $\U(x_1,\ldots,x_n)$ in $L'(D)$,  there exists $\alpha \in \U(x_1,\ldots,x_n)$ such that  
  $\alpha \in {\mathcal{U}}'$.  Thus every unit in $R$ is a unit in $\alpha$.  
  If $R$ does not dominate $D$, then there exists a nonunit $d \in D$ such that $d$ is a unit in $R$, hence also in $\alpha$.  But $\alpha$ dominates $D$, so this is impossible. We conclude that $R$ dominates $D$.
  
  To see that $x_1,\ldots,x_n \in R$, suppose to the contrary that $x_i \not \in R$ for some $i$. 
  Then $R \in {\mathcal V}' := \{S \in L'(D) ~|~    x_i \not \in S\}$.  Since $R$ is a patch limit point 
  of $\U(x_1,\ldots,x_n)$ in $L'(D)$ and ${\mathcal V}'$ is a patch open set in $L'(D)$, the  
  intersection  $\U(x_1,\ldots,x_n) \cap {\mathcal V}'$ is nonempty, a contradiction. Therefore  $x_1,\ldots,x_n \in R$, which proves that $\U(x_1,\ldots,x_n)$ is patch closed in $L'(D)$.
 A patch closed subspace of a spectral space is a spectral space \cite[Tag 0902]{stacks-project}, so  
 $\U(x_1,\ldots,x_n)$ is a spectral  space and hence is quasicompact. Since $L(D) = \U(1)$,
  Proposition~\ref{L(D) spectral}
 now follows.  
\end{proof} 


\begin{corollary} \label{basis} The patch topology of the spectral space $L(D)$ has as a basis of 
 closed and open sets the sets of  the form $$\left({\mathcal{U}}(S_1) \cup \cdots \cup {\mathcal{U}}(S_n)\right) \cap {\mathcal{V}}(T_1) \cap \cdots \cap {\mathcal{V}}(T_m), $$ where $S_1,\ldots,S_n,T_1,\ldots,T_m$ are finite subsets of $F$.  
 \end{corollary}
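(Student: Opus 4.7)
The plan is to derive this directly from the general construction of the patch topology on a spectral space, applied to the description of $L(D)$ given in Proposition~\ref{L(D) spectral}. Recall that by definition the patch topology of a spectral space $X$ has as a subbasis the collection of quasicompact open subsets of $X$ together with their complements, so a basis is obtained by taking finite intersections of members of this subbasis. In particular, every basic patch open set can be written as $U \cap (X \setminus U')$ where $U,U'$ are quasicompact open subsets of $X$. Moreover, each quasicompact open set is clopen in the patch topology, so such intersections are automatically clopen.

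For $X = L(D)$, Proposition~\ref{L(D) spectral} supplies a basis of quasicompact open sets consisting of the sets $\mathcal{U}(x_1,\ldots,x_n)$ with $x_1,\ldots,x_n \in F$. The first step I would carry out is to observe that this basis is closed under finite intersections, since
$$\mathcal{U}(x_1,\ldots,x_n) \cap \mathcal{U}(y_1,\ldots,y_m) = \mathcal{U}(x_1,\ldots,x_n,y_1,\ldots,y_m).$$
The second step is to note that every quasicompact open subset of $L(D)$ has the form $\mathcal{U}(S_1) \cup \cdots \cup \mathcal{U}(S_n)$ for some finite subsets $S_1,\ldots,S_n$ of $F$: any open cover of a quasicompact open set by members of the above basis admits a finite subcover, which yields such an expression.

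Combining these two observations, an arbitrary basic patch open set has the form
$$\bigl(\mathcal{U}(S_1) \cup \cdots \cup \mathcal{U}(S_n)\bigr) \;\cap\; \bigl(L(D) \setminus (\mathcal{U}(T_1) \cup \cdots \cup \mathcal{U}(T_m))\bigr)$$
for finite subsets $S_1,\ldots,S_n,T_1,\ldots,T_m$ of $F$. Applying De Morgan's law and the definition $\mathcal{V}(T_j) = L(D) \setminus \mathcal{U}(T_j)$, the complement in the second factor equals $\mathcal{V}(T_1) \cap \cdots \cap \mathcal{V}(T_m)$, giving the stated form. Conversely, any such expression is a finite intersection of quasicompact open sets and complements of quasicompact open sets, hence is a basic patch open (and clopen) set.

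This is essentially a bookkeeping argument once Proposition~\ref{L(D) spectral} is in hand; the only mildly nontrivial point is the second step, namely that every quasicompact open subset of $L(D)$ reduces to a finite union of basic sets $\mathcal{U}(S_i)$, but this is immediate from the definition of quasicompactness applied to a cover by members of the given basis.
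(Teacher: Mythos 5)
Your proposal is correct and follows essentially the same route as the paper: the paper also deduces the corollary from Proposition~\ref{L(D) spectral} together with the fact that the patch topology of a spectral space has a basis consisting of intersections of a quasicompact open set with the complement of a quasicompact open set, and your argument simply spells out the routine steps (quasicompact opens are finite unions of the sets ${\mathcal{U}}(S_i)$, and De Morgan converts the complement into ${\mathcal{V}}(T_1) \cap \cdots \cap {\mathcal{V}}(T_m)$).
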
 
 
 \begin{proof} Since the {patch topology}  on a spectral space has a basis of open sets 
 the sets that are an intersection of a quasicompact open set and the complement of a quasicompact open set, this follows from Proposition~\ref{L(D) spectral}. 
 \end{proof}

 \begin{remark}  \label{recover} Although the patch topology on $L(D)$ is finer than the Zariski topology, the Zariski topology can be recovered from it: The Zariski closed sets $\V$ of $L(D)$ are precisely the downsets (with respect to set inclusion) of the patch closed subsets of $L(D)$. This follows for example from \cite[Corollary, p.~45]{Hoc}.
 The upsets of patch closed subsets of $L(D)$ form a basis for the {\it inverse topology} on $L(D)$. 
 \end{remark}

Viewing $Q(D)$ as a subspace 
 of $L(D)$, we describe in Theorem~\ref{patch closure} the
  {\it patch closure} of $Q(D)$ in $L(D)$, i.e., the closure 
  of $Q(D)$ in $L(D)$ with respect to the patch topology. 
  To do this, we first describe in the next lemmas additional  
  topological properties of $Q(D)$ and the  limit points of $Q(D)$ in the patch topology.

\begin{lemma} \label{clopen}  For each $\alpha \in Q(D)$, the subspace 
 $L(\alpha)$ is patch clopen in $L(D)$. 
\end{lemma}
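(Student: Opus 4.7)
My plan is to argue by induction on the level $h \geq 0$ for which $\alpha \in Q_h(D)$, using the canonical quadratic-transform chain from Remark~\ref{note2.1}.8. The base case $h=0$ is trivial since $\alpha = D$ gives $L(\alpha) = L(D)$. For the inductive step, fix $\alpha \in Q_h(D)$ with $h \geq 1$ and let $\beta \in Q_{h-1}(D)$ be the unique predecessor, so that $\alpha$ is a first local quadratic transform of $\beta$. By the inductive hypothesis $L(\beta)$ is patch clopen in $L(D)$, and since a clopen subset of a clopen subspace is clopen in the ambient space, it suffices to exhibit $L(\alpha)$ as a patch clopen subset of $L(\beta)$.

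The core algebraic step is to translate ``$R$ birationally dominates $\alpha$'' into finitely many basic conditions on $R$. Write $\m_\beta = (x,y)\beta$ and, after possibly interchanging $x$ and $y$, take $\alpha = \beta[y/x]_\m$, where $\m = (x,F)\beta[y/x]$ is a maximal ideal of the Noetherian ring $\beta[y/x]$ (maximality because $\hgt\m = \dim\alpha = 2 = \dim\beta[y/x]$). The key claim is
\[
L(\alpha) \;=\; L(\beta) \,\cap\, {\mathcal U}(y/x) \,\cap\, {\mathcal V}(1/F).
\]
The containment ``$\subseteq$'' is clear, since domination of $\alpha$ forces $y/x \in R$ and $F \in \m_R$, so $1/F \notin R$. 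For the reverse containment, let $R \in L(\beta)$ with $y/x \in R$ and $1/F \notin R$. Then $\beta[y/x] \subseteq R$, the element $F \in \beta[y/x]$ lies in $\m_R$, and together with $x \in \m_R$ (forced by $R \in L(\beta)$) this gives $\m \subseteq \m_R \cap \beta[y/x]$. Maximality of $\m$ forces the proper ideal $\m_R \cap \beta[y/x]$ to equal $\m$, so every element of $\beta[y/x]\setminus \m$ is a unit in $R$. Hence $\alpha = \beta[y/x]_\m \subseteq R$ and $\m_\alpha \subseteq \m_R$, i.e., $R \in L(\alpha)$.

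To finish, ${\mathcal U}(y/x)$ is a basic Zariski open set and ${\mathcal V}(1/F)$ is the complement of a basic Zariski open set, so both are patch clopen in $L(D)$ by Corollary~\ref{basis}. The displayed identity then realizes $L(\alpha)$ as a finite intersection of patch clopen subsets of $L(D)$, completing the induction. The main subtlety I expect is in the reverse containment: it is exactly the maximality of $\m$ in $\beta[y/x]$ that collapses the a priori infinite collection of unit conditions ``every element of $\beta[y/x]\setminus\m$ is invertible in $R$'' into the two finite conditions on $y/x$ and $F$, and this finiteness is what makes a single quadratic transform small enough to be detected by a basic patch clopen set.
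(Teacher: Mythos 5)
Your argument is correct, and its engine is the same one the paper uses: the center of $\alpha$ on a suitable finitely generated algebra is a \emph{maximal} ideal, so birational domination of $\alpha$ is captured by finitely many conditions of the form ``$x\in R$'' and ``$1/y\notin R$'', i.e.\ by a basic patch clopen set. The organization differs, though. The paper invokes Proposition~\ref{prop1.6}.2 to write $\alpha=D[x_1,\ldots,x_n]_{\p}$ with $\p=(y_1,\ldots,y_m)$ maximal and then exhibits $L(\alpha)={\mathcal U}(x_1,\ldots,x_n)\cap{\mathcal V}(1/y_1)\cap\cdots\cap{\mathcal V}(1/y_m)$ in one step, whereas you induct on the level and peel off one local quadratic transform at a time, using that the center on the chart $\beta[y/x]$ is generated by $x$ together with a single lift $F$ of an irreducible polynomial in $(\beta/\m_\beta)[T]$, so that $L(\alpha)=L(\beta)\cap{\mathcal U}(y/x)\cap{\mathcal V}(1/F)$. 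What your route buys is a completely explicit verification of the reverse inclusion (the step the paper compresses into ``it follows that''): you correctly use that $F\in\beta[y/x]\subseteq R$ and $1/F\notin R$ force $F\in\m_R$, and that maximality of $(x,F)$ then pins down the center, hence $\alpha\subseteq R$ and domination. The paper's route is shorter and avoids both the induction and the (true, but here unproved) fact that the relevant maximal ideal of the chart is two-generated; your argument would in any case survive with an arbitrary finite generating set of the center, exactly as in the paper. Two small cosmetic points: the symbol $F$ is already reserved in the paper for the quotient field of $D$, so rename your polynomial lift; and the justification ``height equals dimension implies maximal'' is fine, but it silently uses $\dim\beta[y/x]\le 2$ (dimension inequality for the birational finitely generated extension $\beta\subseteq\beta[y/x]$), which deserves a word.
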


\begin{proof}  
By Proposition~\ref{prop1.6}.2, there exist 
$x_1,\ldots,x_n \in \alpha$ such that $\alpha = D[x_1,\ldots,x_n]_\p$ 
for some prime ideal $\p$ of $D[x_1,\ldots,x_n]$.  Write $\p = (y_1,\ldots,y_m)D[x_1,\ldots,x_n]$,
where $y_1, \ldots, y_m$ are nonzero.  Since $\alpha$ has dimension $2$, 
$\p$ is a maximal ideal of $D[x_1,\ldots,x_n]$. 
 Since $L(\alpha)$ is the set of rings in $L(D)$ that dominate $\alpha$, 
 it follows that 
 $$L(\alpha) =  {\mathcal{U}}(x_1,\ldots,x_n) \cap {\mathcal V}(1/y_1) \cap \cdots \cap {\mathcal{V}}(1/y_m).$$  Thus $L(\alpha)$ is patch clopen in $L(D)$.
\end{proof} 

In light of Remark~\ref{recover}, the following notation will be useful in this section and the next. 

 \begin{notation} \label{downset}
For a  nonempty subset $\S$ of $Q^*(D)$, we denote by  $\downarrow \hspace{-.03in} \S$ the {\it downset} 
of $\S$ in $Q(D)$: 
\begin{center}
 $\downarrow \hspace{-.03in} \S = \{\alpha \in Q(D) \mid \alpha \subseteq \beta$ for some $\beta \in \S\}$.  
\end{center} 
When $\S $ consists of a single ring $R$, we write $\downarrow \hspace{-.03in} R$ for $\downarrow \hspace{-.03in} \{R\}$.  
Note that while $\S$ may be a subset of $Q^*(D)$,  in our notation the members of the downset are restricted to $Q(D)$.  To differentiate between these two cases, we set  \begin{center}
 $\downarrow^* \hspace{-.02in} \S = \{\alpha \in Q^*(D) \mid \alpha \subseteq \beta$ for some $\beta \in \S\}$.  
\end{center} 

 \end{notation}

Theorem~\ref{minimal V}  
describes  the valuation rings that are  patch limit points in $L(D)$ of a subset $\S$ of $Q(D)$.  
Theorem~\ref{patch closure} implies patch limits points of subsets of $Q(D)$ are necessarily valuation rings in $Q^*(D)$.   This description shows that the patch limit points of $\S$ are determined by the properties of the partially ordered set $Q^*(D)$ rather than the nature of the underlying objects that comprise the set $Q^*(D)$.  

\begin{theorem} \label{minimal V} Let $\S$ be a subset of $Q(D)$, and let $V$ be a valuation ring in $Q^*(D)$. 
\begin{enumerate} 
\item Suppose $V \in X(D)$.  Then the following are equivalent. 
\begin{enumerate}
\item $V$ is  a patch limit point of $\S$ in $L(D)$.
\item  $\S \cap Q(\alpha)$ is nonempty for each $\alpha \in Q(D)$ dominated by $V$.  
 \end{enumerate}
 \item  Suppose  $V \not \in X(D)$. Then  $V$ is a prime divisor of the second kind and an order valuation ring of  some $\alpha \in Q(D)$.  In this case, the following are equivalent.
 \begin{enumerate}
 \item 
 $V$ is a patch limit point of $\S$ in $L(D)$.
 \item $Q_1(\alpha) \: \cap \downarrow \hspace{-.03in} \S$ is infinite.
 
 \item There are infinitely many incomparable rings in $\S$ contained in $V$.
 
 \item There are infinitely many incomparable rings in  $\S$ proximate to $\alpha$. 
 \end{enumerate}

 \end{enumerate}
\end{theorem}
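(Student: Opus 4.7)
My plan is to leverage the spectral-space structure of $L(D)$ together with Lemma~\ref{clopen}. For Part~(1), (a)~$\Rightarrow$~(b) is immediate: for any $\alpha \in Q(D)$ dominated by $V$, Lemma~\ref{clopen} gives $L(\alpha)$ as a patch clopen neighborhood of $V$, forcing $\S \cap Q(\alpha) = \S \cap L(\alpha) \ne \emptyset$. For (b)~$\Rightarrow$~(a), I use Proposition~\ref{prop1.6}.4 to write $V$ as the union of its iterated quadratic transforms $D = R_0 \subset R_1 \subset \cdots$, so that the rings of $Q(D)$ dominated by $V$ are exactly the $R_k$. Given a basic patch neighborhood $U = \U(S) \cap \V(T_1) \cap \cdots \cap \V(T_m)$ of $V$ as in Corollary~\ref{basis}, I choose $N$ large enough that $S \subseteq R_N$ and that for each $j$ some $t_j \in T_j \setminus V$ has $1/t_j \in R_N$; then any $\beta \in \S \cap Q(R_N)$ supplied by (b) dominates $R_N$, and $S \subseteq \beta$ together with $1/t_j \in \m_\beta$ (whence $t_j \notin \beta$) gives $\beta \in U$.

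For Part~(2), the existence of $\alpha$ with $V = \ord_\alpha$ is immediate from Remark~\ref{note2.1}.6 since $V$ is a prime divisor of the second kind dominating $D$. For (a)~$\Rightarrow$~(b) I argue the contrapositive: if $Q_1(\alpha) \cap \downarrow\hspace{-.03in}\S = \{\gamma_1,\ldots,\gamma_N\}$ is finite and $(x,y)$ is a regular system of parameters of $\alpha$, the set $U = L(\alpha) \cap \U(y/x) \cap \bigcap_{i=1}^{N}(L(D) \setminus L(\gamma_i))$ is a patch clopen neighborhood of $V$: indeed $V$ dominates $\alpha$, contains $y/x$ as a unit, and fails to dominate any $\gamma_i$ since the Abhyankar sequence along $V$ ends at $\alpha$. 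Partitioning $\S$ into rings equal to $\alpha$ (excluded by $\U(y/x)$ as $y/x \notin \alpha$), rings strictly dominating $\alpha$ (which must factor through some $\gamma \in Q_1(\alpha) \cap \downarrow\hspace{-.03in}\S = \{\gamma_i\}$, hence lie in some $L(\gamma_i)$), and rings not containing $\alpha$ (excluded by $L(\alpha)$) shows $U \cap \S = \emptyset$. For (b)~$\Rightarrow$~(a), I would show that writing any element of $V$ as $p/q$ with $p,q \in \alpha[y/x]$ and $q \notin (x)\alpha[y/x]$ places it in cofinitely many $\gamma_a := \alpha[y/x]_{(x,\,y/x - a)} \in Q_1(\alpha)$, and dually that $1/g \in \m_V \cap \gamma_a \subseteq \m_{\gamma_a}$ for cofinitely many $a$ when $g \notin V$; hence any basic patch neighborhood of $V$ contains cofinitely many $\gamma \in Q_1(\alpha)$, and (b) supplies some $\beta \in \S$ dominating such a $\gamma$, which by domination inherits the membership conditions and so lies in the neighborhood.

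For the structural equivalences, the rings of $Q(D)$ contained in $V$ decompose as the finite totally ordered chain $\{R_0,\ldots,R_{h-1}\}$ (with $R_h = \alpha$) together with $P(\alpha)$, which by Remark~\ref{rm3.4}.3 is $Q_1(\alpha)$ joined with an infinite ray attached to each of its elements. Any infinite antichain of $\S$ contained in $V$ therefore has at most one element in the chain, yielding (c)~$\Rightarrow$~(d); the converse is trivial. For (d)~$\Leftrightarrow$~(b), each ring in $P(\alpha)$ sits above a unique element of $Q_1(\alpha)$, with two rings over the same element being comparable via the ray, so antichains in $\S \cap P(\alpha)$ correspond to distinct elements of $Q_1(\alpha) \cap \downarrow\hspace{-.03in}\S$. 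The step I expect to be the main obstacle is the lifting in (b)~$\Rightarrow$~(d): given $\gamma \in Q_1(\alpha) \cap \downarrow\hspace{-.03in}\S$, the witness $\beta \in \S$ with $\gamma \subseteq \beta$ a priori need only dominate $\gamma$ and not lie in $P(\alpha)$, since quadratic transforms of $\gamma$ in directions transverse to $V$ escape $V$; the argument must exploit the proximity structure of Remark~\ref{rm3.4} to single out the unique ray from $\gamma$ consisting of subrings of $V$ and show that $\beta$ can be chosen on that ray.
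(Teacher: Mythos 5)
Your Part 1 and the first two implications of Part 2 are essentially correct and follow the same lines as the paper: for 1(a)$\Rightarrow$1(b) you use the patch clopen set $L(\alpha)$ from Lemma~\ref{clopen}; for 1(b)$\Rightarrow$1(a) you exhaust $V$ by its quadratic sequence exactly as the paper does; your contrapositive for 2(a)$\Rightarrow$2(b) is the paper's argument with the neighborhood $L(\alpha)\setminus(L(\beta_1)\cup\cdots\cup L(\beta_n)\cup\{\alpha\})$ rephrased; and your 2(b)$\Rightarrow$2(a) replaces the paper's density-plus-compactness argument (infinite subsets of the irreducible curve $\{V\}\cup Q_1(\alpha)$ are Zariski dense, then a finite-intersection-property argument in the patch closure of $\S'$) by an explicit cofiniteness computation on the exceptional fiber; that route works, with the small caveat that you should run the computation over all closed points of $Q_1(\alpha)$ (all maximal ideals of $\alpha[y/x]$ and $\alpha[x/y]$ containing the exceptional element), not only the rational points $\gamma_a$, since when the residue field is not algebraically closed the infinite set $Q_1(\alpha)\,\cap\downarrow\!\S$ supplied by (b) need not meet your family $\{\gamma_a\}$.

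The genuine gap is the step you yourself flag, and your proposed repair cannot work. You hope to close 2(b)$\Rightarrow$2(d) by showing the witness $\beta\in\S$ over a given $\gamma\in Q_1(\alpha)\,\cap\downarrow\!\S$ "can be chosen on the ray" of $P(\alpha)$ above $\gamma$; but nothing forces $\S$ to contain any ring on that ray. Indeed, for infinitely many $\gamma\in Q_1(\alpha)$ choose $\beta_\gamma\in Q_1(\gamma)$ different from the unique quadratic transform of $\gamma$ contained in $V$ (Remark~\ref{rm3.4}.1) and set $\S=\{\beta_\gamma\}$: then (b) holds, and your cofiniteness argument (equally, the paper's proof of (b)$\Rightarrow$(a), which only uses that each witness dominates some $\tau\in Q_1(\alpha)$) shows $V$ is a patch limit point of $\S$, yet no member of $\S$ is contained in $V$, so conditions (c) and (d) fail when read literally for members of $\S$. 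The equivalences are correct, and immediate from the antichain analysis you already give, once (c) and (d) are read for rings of $Q(D)$ lying below members of $\S$, i.e.\ for $\downarrow\!\S$: the infinitely many rings in $Q_1(\alpha)\,\cap\downarrow\!\S$ are incomparable, contained in $V$, and proximate to $\alpha$, and conversely infinitely many incomparable elements of $\downarrow\!\S$ in $P(\alpha)$ lie over infinitely many distinct elements of $Q_1(\alpha)$. This downset reading is what the paper's one-sentence justification of (b)$\Rightarrow$(c) ("the rings in $Q_1(\alpha)$ are incomparable and contained in $V$") actually proves, and it is the version used later (e.g.\ in Theorem~\ref{Zariski closure} via $\S_\infty$); so rather than searching for a way to move the witness onto the ray, you should prove the downset form of (c) and (d), or note explicitly that the witness-level statement is strictly stronger and not a consequence of (a) and (b).
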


\begin{proof}  
To prove item 1, suppose that $V \in X(D)$ is a patch limit point of $\S$ in $L(D)$ and that $\alpha \in Q(D)$ is dominated by $V$.  Since $V \in X(D)$, Remark~\ref{note2.1}.8.b implies that there exists  $\alpha_1 \in Q_1(\alpha)$ such that $V$ dominates $\alpha_1$.  Let $\U = L(\alpha) \setminus L(\alpha_1)$. By Lemma~\ref{clopen}, $\U$ is a patch open set of $L(D)$ containing $V$. As $V$ is  a patch limit point of $\S$ in $L(D)$, the set $\U \cap \S$ is nonempty. Since $\S \subseteq Q(D)$ and $Q(\alpha) = Q(D) \cap L(\alpha)$, we conclude that $Q(\alpha) \cap \S$ is nonempty.

 Conversely, suppose $Q(\alpha) \cap \S$ is nonempty for each $\alpha$ dominated by $V$.  Since $V \in X(D)$, Remark~\ref{note2.1}.8.b implies there is an infinite sequence $$\alpha_0 \subseteq \alpha_1 \subseteq \cdots \subseteq  \alpha_{i} \subseteq \cdots $$ of elements of $Q(D)$ dominated by $V$ such that $V = \bigcup_{i} \alpha_i$.  
 By assumption,  $\S \cap Q(\alpha_i)$ is nonempty for each $i \geq 1$.  Let ${\mathcal{U}}$ be a patch open subset of $L(D)$ containing $V$.  {By Corollary~\ref{basis}} there are nonzero $x_1,\ldots,x_n,y_1,\ldots,y_m \in F$ such that $$V \in {\mathcal{U}}( x_1,\ldots,x_n) \cap {\mathcal{V}}(y_1) \cap \cdots \cap  {\mathcal{V}}(y_m) \subseteq \U.$$
Since $V$ is a valuation ring, we have $y_1^{-1},\ldots,y_m^{-1} \in {\ff M}_V$.
 Since $V$ is a directed union of the $\alpha_i$, there is $i \geq 0$ such that $x_1,\ldots,x_n,y_1^{-1},\ldots,y_m^{-1} \in \alpha_i$.  Moreover, since $V$ dominates $\alpha_i$, the maximal ideal of $\alpha_i$ contains $y_1^{-1},\ldots,y_m^{-1}$.  {Since the rings in $Q(\alpha_i)$ dominate $\alpha_i$, it follows that $$Q(\alpha_i) \subseteq {\mathcal{U}}( x_1,\ldots,x_n) \cap {\mathcal{V}}(y_1) \cap \cdots \cap  {\mathcal{V}}(y_m) \subseteq  {\mathcal{U}}.$$}Thus  the assumption that $\S \cap Q(\alpha_i)$ is nonempty implies  that  $\S \cap {\mathcal{U}}$ is nonempty, which proves that $V$ is  a patch limit point of $\S$ in $L(D)$.  

It remains to prove item 2. Since $V \not \in X(D)$, $V$ is a prime divisor of $D$ of the second kind. Therefore, by Remark~\ref{note2.1}.7, there is $\alpha \in Q(D)$ such that $V$ is the order valuation ring of $\alpha$.

(a) $\Rightarrow$ (b) 
 Suppose $V$ is a patch limit point of $\S$.  We   show that there are infinitely many   local  rings in $Q_1(\alpha)$ that are dominated by local rings in $\S$. To this end, let  $\beta_1,\ldots,\beta_n \in Q_1(\alpha)$. 
By Lemma~\ref{clopen} and the fact that points are closed in the patch topology, we have  that
$${\mathcal{U}}:=L(\alpha) \setminus (L(\beta_1) \cup \cdots \cup L(\beta_n) \cup \{\alpha\})$$ is a {nonempty} patch open subset of $L(D)$.  Since $V$ is the order valuation ring of $\alpha$, $V$ dominates $\alpha$ but does not dominate any local ring in $Q_1(\alpha)$.  Consequently, $V \in \U$.  
 Since $V$ is a patch limit point of $\S$, there exists $\gamma \in \U \cap \S$.  Because $\gamma $ properly dominates $\alpha$ and does not dominate any of the $\beta_i$, it follows that $\gamma$ dominates some member of $Q_1(\alpha)$ distinct from $\beta_1,\ldots,\beta_n$.  
 Since this is true for any finite subset $\{\beta_1,\ldots,\beta_n \}$ of $ Q_1(\alpha)$,   we conclude  there are infinitely many local rings in $Q_1(\alpha)$ dominated by local rings in $\S$.

 (b) $\Rightarrow$ (c) This is clear since the rings in $Q_1(\alpha)$ are incomparable and contained in $V$.
 
 (c) $\Rightarrow$ (d) The rings $\beta$  in $Q(D)$ contained in $V$ are either proximate to $\alpha$ 
or  are dominated by $V$ (see Remark~\ref{note2.1}.7). In the latter case, $\alpha$ dominates $\beta$, and hence there are only finitely many rings in $Q(D)$ dominated by $V$.  Item d now follows from item c. 
 
(d) $\Rightarrow$ (b)  This follows from Remark~\ref{rm3.4}. 


(b) $\Rightarrow$ (a) 
Suppose that the set of local rings in $Q_1(\alpha)$  that are dominated by local rings in $\S$ is infinite.  We show $V$ is  a patch limit point of $\S$. 
Let $X$ be the projective model of $\Spec \alpha$ obtained by blowing up the maximal ideal of $\alpha$. Then  
 $Q_1(\alpha)$ is the set of closed points of $X$ and $V$ is the generic point of the irreducible Zariski closed subset ${\mathcal{C}}:= \{V\} \cup Q_1(\alpha)$ of $X$.   Since ${\mathcal{C}}$ is a Noetherian space in the Zariski topology, every closed subset of ${\mathcal{C}}$ is a finite union of irreducible closed sets.  The only irreducible proper closed subsets of ${\mathcal{C}}$ are the singleton subsets of  $Q_1(\alpha)$, so
 it follows 
 that any infinite subset of ${\mathcal{C}}$ is dense in ${\mathcal{C}}$.    
 By assumption, there is an infinite subset $\T$  of $Q_1(\alpha)$ such that each local ring in $\T$  is dominated by a local ring in $\S$.  Since $\T$ is Zariski dense in ${\mathcal{C}}$ and $V$ is the generic point for ${\mathcal{C}}$, it follows that 
 $V$ is a patch limit point of $\T$ \cite[Proposition~2.6(1)]{OZar}. 
 
 To see now that $V$ is  a patch limit point of $\S$, let \begin{center}
 $\S' = \{\sigma \in \S  ~|~ \tau \subseteq \sigma$ for some $\tau \in \T\}$. 
  \end{center}  
  Fix $y \ne 0 $ in the maximal ideal of $ V$.  
 For all nonzero $x_1,\ldots,x_n \in V$,
 we have $$V \in {\mathcal{U}}(x_1,\ldots,x_n,y) \cap {\mathcal{V}}(1/y).$$ 
 Since ${\mathcal{U}}(x_1,\ldots,x_n,y) \cap {\mathcal{V}}(1/y)$ is patch open and $V$ is a patch limit point of $\T$,  there exists $\tau \in  \T \cap {\mathcal{U}}(x_1,\ldots,x_n,y) \cap {\mathcal{V}}(1/y).$ Let $\sigma \in \S'$ such that $\tau \subseteq \sigma$. Since $\sigma$ dominates $\tau$, we have  $\sigma  \in {\mathcal{U}}(x_1,\ldots,x_n,y) \cap {\mathcal{V}}(1/y)$. This shows that for all nonzero $x_1,\ldots,x_n \in V$, we have $$\S' \cap {\mathcal{U}}(x_1,\ldots,x_n,y) \cap {\mathcal{V}}(1/y) \ne \emptyset.$$  Since the patch closure $\overline{\S'}$ of $\S'$ in $L(D)$ is compact in the patch topology and the collection of patch closed sets of the form  $\overline{\S'} \cap {\mathcal{U}}(x_1,\ldots,x_n,y) \cap {\mathcal{V}}(1/y)$ has the finite intersection property,  
 there is a ring $W \in \overline{\S'}$ such that $W \in  {\mathcal{U}}(x_1,\ldots,x_n,y) \cap {\mathcal{V}}(1/y) $ for all  $x_1,\ldots,x_n \in V$. Thus $V \subseteq W \subsetneq F$.  Since $V$ is a DVR, we conclude that $V = W$, which proves that $V \in \overline{\S'}$. Since $V$ is not in $\S'$, it must be that $V$ is  a patch limit point of $\S'$, hence  a patch limit point of $\S$.   
\end{proof}


Using the description of patch limit points in Theorem~\ref{minimal V}, we  next 
show  
that the patch limit points of the 
set $Q(D)$ in $L(D)$  is the set of all valuation rings that birationally dominate $D$.  

\begin{theorem} \label{patch closure} 
The set of patch limits points  of $Q(D)$ in $L(D)$ is precisely the set of  valuation rings in $L(D)$.  
Thus the
 patch closure of $Q(D)$ in $L(D)$ is  ${Q}^*(D)$. 
\end{theorem}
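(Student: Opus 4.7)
The plan is to prove that the set of patch limit points of $Q(D)$ in $L(D)$ coincides with the set of valuation rings in $L(D)$. The second sentence of the theorem then follows, since the patch closure is $Q(D)$ together with its patch limit points, and $Q^*(D)$ is by definition the union of $Q(D)$ and the valuation rings in $L(D)$. I would establish the characterization by proving the two inclusions: (a) every valuation ring in $L(D)$ is a patch limit point of $Q(D)$, and (b) every patch limit point of $Q(D)$ in $L(D)$ is a valuation ring.

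For step (a), I would first observe that, by Abhyankar's inequality for a $2$-dimensional local domain, any valuation ring $V$ birationally dominating $D$ is either in $X(D)$ or is a prime divisor of the second kind, and in the latter case $V = \ord_\alpha$ for some $\alpha \in Q(D)$ by Remark~\ref{note2.1}.7. If $V \in X(D)$, then for every $\alpha \in Q(D)$ dominated by $V$ we have $\alpha \in Q(D) \cap Q(\alpha)$, so Theorem~\ref{minimal V}.1 (b)$\Rightarrow$(a) applies with $\S = Q(D)$. If $V = \ord_\alpha$, then $Q_1(\alpha) \subseteq Q(D)$ is infinite, since it parametrizes the closed points of the $\mathbb{P}^1$ exceptional divisor of the blowup of $\m_\alpha$, so Theorem~\ref{minimal V}.2 (b)$\Rightarrow$(a) applies.

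For step (b), I would argue the contrapositive, splitting into two cases. If $R \in Q(D)$, write $\m_R = (z, w)R$ with $\{z, w\}$ a minimal generating set, so that neither $z/w$ nor $w/z$ lies in $R$. Each $\beta \in Q_1(R)$ is a localization of $R[z/w]$ or $R[w/z]$, hence contains one of $z/w, w/z$, and the same therefore holds for each $\gamma \in Q(R) \setminus \{R\}$. Thus, using Lemma~\ref{clopen}, the set $L(R) \cap \V(z/w) \cap \V(w/z)$ is a patch open neighborhood of $R$ whose intersection with $Q(D)$ is $\{R\}$, so $R$ is not a patch limit point. If instead $R \in L(D) \setminus Q(D)$ is not a valuation ring, choose $x \in F$ nonzero with $x, x^{-1} \notin R$. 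The crucial step is to show that
\[
T_x := \{\alpha \in Q(D) \mid x \notin \alpha \text{ and } x^{-1} \notin \alpha\}
\]
is finite. Writing $x = f/g$ with $f, g$ coprime nonunits in the UFD $D$, the ideal $(f, g)D$ is $\m_D$-primary, and $T_x$ is exactly the set of $\alpha \in Q(D)$ at which $(f, g)\alpha$ is not principal: if $(f, g)\alpha = (h)\alpha$ with $f = uh$ and $g = vh$ satisfying $(u, v)\alpha = \alpha$, then one of $u, v$ is a unit in $\alpha$, forcing $g/f$ or $f/g$ into $\alpha$. By Zariski's theorem on the unique factorization of complete $\m_D$-primary ideals into simple complete ideals, together with the correspondence in Remark~\ref{note2.1}.8.c between simple complete $\m_D$-primary ideals and infinitely near points, the set of base points of $(f, g)D$ in $Q(D)$ is finite. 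Hence $T_x$ is finite and, being a finite set in the Hausdorff patch topology, patch closed. Then $\V(x) \cap \V(x^{-1}) \setminus T_x$ is a patch open neighborhood of $R$ disjoint from $Q(D)$, showing $R$ is not a patch limit point.

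The main obstacle is the finiteness of $T_x$: it is the one ingredient that goes beyond the general topological machinery of spectral spaces developed in this section, and requires the classical geometry of the quadratic tree in the form of Zariski's factorization of $\m_D$-primary complete ideals together with the attendant finiteness of base points of $(f, g)D$ in $Q(D)$.
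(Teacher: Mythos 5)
Your proof is correct, and while the direction that every valuation ring in $L(D)$ is a patch limit point proceeds exactly as in the paper (you merely supply the details behind the appeal to Theorem~\ref{minimal V}.1 and \ref{minimal V}.2), your argument that patch limit points must be valuation rings takes a genuinely different route. The paper works directly with a given patch limit point $R$: the rings of $Q(D)$ dominated by $R$ form a chain; if the chain is infinite, $R$ is the minimal valuation ring obtained as its union, and if it is finite, then for $\alpha$ maximal in the chain and $\m_\alpha = (x,y)\alpha$ the same regular-parameter trick you use in your first case shows $x/y \in R$ or $y/x \in R$, whence $R$ contains, and then equals, $\ord_\alpha$. That argument needs nothing beyond Remark~\ref{note2.1}.8 and identifies exactly which valuation ring $R$ is. You instead argue the contrapositive: a non-valuation ring $R \in L(D) \setminus Q(D)$ admits $x$ with $x, x^{-1} \notin R$, and the undetermined locus $T_x \subseteq Q(D)$ is finite, so ${\mathcal{V}}(x) \cap {\mathcal{V}}(x^{-1}) \setminus T_x$ is a patch neighborhood of $R$ missing $Q(D)$; your first case is essentially the paper's Corollary~\ref{discrete}. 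What your route buys is a shorter topological argument with an explicit separating neighborhood; what it costs is the classical input that an $\m_D$-primary ideal has only finitely many base points, which the paper's proof avoids entirely. Two compressed steps there deserve a sentence each: Zariski's unique factorization applies to complete ideals, so one should pass to $J = \overline{(f,g)D}$ and observe that $(f,g)\alpha$ is principal if and only if $J\alpha$ is (if $J\alpha = h\alpha$, write $f = ah$, $g = bh$; then $\overline{(a,b)\alpha} = \alpha$ forces $(a,b)\alpha = \alpha$, so $(f,g)\alpha = h\alpha$); and the passage from ``$(f,g)\alpha$ not principal'' to ``$\alpha$ is a base point'' uses that at a non-base point the extended ideal coincides with its exceptional part, which is principal by Remark~\ref{note2.1}.8.b. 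With these details supplied, your proof is complete.
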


\begin{proof}  By Proposition~\ref{L(D) spectral}, $L(D)$ is a spectral space.  
First, we prove that every patch limit point of $Q(D)$ in $L(D)$ is a valuation ring. Let $R \in L(D)$ be a patch limit point of $Q(D)$, and let $V$ be a valuation ring in $L(D)$ dominating $R$. By Remarks~\ref{note2.1}.8, the rings in $Q(D)$ dominated by $R$ form a chain under inclusion. If this chain is infinite, then  by Remarks~\ref{note2.1}.8, we conclude that $R = V$, and the proof is complete.

Suppose there are only finitely many rings in $Q(D)$ dominated by $R$, and choose $\alpha \in Q(D)$ such that no member of $Q(D)$ properly dominates $\alpha$ and is dominated by $R$.  
Write ${\ff m}_\alpha = (x,y)\alpha$.  We claim that $x/y \in R$ or $y/x \in R$.  Suppose to the contrary that $x/y \not \in R$ and $y/x \not \in R$. Then 
 $R \in L(\alpha)  \cap \V(x/y) \cap \V(y/x)$. By Lemmas~\ref{basis} and~\ref{clopen}, the set $L(\alpha)  \cap \V(x/y) \cap \V(y/x)$is patch open in $L(D)$. Since $R$ is a patch limit point   of $Q(D)$ and the patch topology is Hausdorff, this implies that the set $L(\alpha)  \cap \V(x/y) \cap \V(y/x) \cap Q(D)$ is infinite. Since $Q(\alpha) = L(\alpha) \cap Q(D)$, we have then that $Q(\alpha)  \cap \V(x/y) \cap \V(y/x)$ is infinite. But since $x,y$ is  a system of regular parameters for $\alpha$, it follows that $Q(\alpha)  \cap \V(x/y) \cap \V(y/x) = \{\alpha\}$, a contradiction. 
Therefore  $x/y \in R$ or $y/x \in R$. 

Assume $x/y \in R$.  The center of $R$ in the ring $\alpha[x/y]$ contains the prime ideal $y\alpha[x/y]$. If $R$ is centered on a prime ideal properly containing $y\alpha[x/y]$, then $R$ is centered on a ring in $Q_1(\alpha)$, contrary to the choice of $\alpha$. Thus $R$ 
 is centered on  $y\alpha[x/y]$. Since the localization of $\alpha[x/y]$ at this height one prime ideal is the order valuation ring of $\alpha$, we conclude that $R$
 contains the order valuation of $\alpha$. Therefore the order valuation ring is a DVR birationally dominated by $R$, so  it follows that $R$ is the order 
 valuation ring of $\alpha$, proving that $R$ is a valuation ring.  This shows that the patch limit points of $Q(D)$ in $L(D)$ are valuation rings.

Finally, 
 every valuation ring $V$ in $L(D)$ is a patch limit point of $Q(D)$, 
  since if  $V$ is a minimal valuation ring of $D$, then $V$ is a patch limit point of $Q(D)$ by Theorem~\ref{minimal V}.1, while if $V$ is a prime divisor of $D$ of the second kind, then $V$ is a patch limit point of $Q(D)$ by Theorem~\ref{minimal V}.2.
 \end{proof}
 
 \begin{corollary} \label{Q* spectral} $Q^*(D)$ is a spectral space with respect to the Zariski topology. The patch topology of this spectral space is the subspace topology induced by the patch topology of $L(D)$.  
 \end{corollary}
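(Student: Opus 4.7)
The plan is to deduce the corollary almost immediately from Theorem~\ref{patch closure} together with the fact, already used in the proof of Proposition~\ref{L(D) spectral}, that a patch closed subspace of a spectral space is again spectral.

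First, I would invoke Theorem~\ref{patch closure} to identify $Q^*(D)$ with the patch closure of $Q(D)$ inside $L(D)$. By Proposition~\ref{L(D) spectral}, $L(D)$ is a spectral space under the Zariski topology, so its patch topology is well defined and $Q^*(D)$ is a patch closed subspace of it. Applying \cite[Tag 0902]{stacks-project} (the same reference already used in the proof of Proposition~\ref{L(D) spectral}), the subspace $Q^*(D)$ is itself a spectral space when endowed with the subspace Zariski topology inherited from $L(D)$.

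Next I would check that the subspace Zariski topology on $Q^*(D)$ coincides with its intrinsic Zariski topology as defined in Notation~\ref{not1.3}.3. This is a direct unwinding of definitions: the basic open set $\U(x_1,\ldots,x_n)$ of $L(D)$ intersects $Q^*(D)$ in exactly the set of rings in $Q^*(D)$ containing $x_1,\ldots,x_n$, and these sets form a basis for the intrinsic Zariski topology on $Q^*(D)$. Hence the two topologies agree, and the first assertion of the corollary follows.

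For the second assertion, I would use the general fact that if $Y$ is a patch closed subspace of a spectral space $X$, then the patch topology of $Y$ (as a spectral space in its own right) coincides with the subspace topology induced by the patch topology of $X$. This is standard and can be read off from the basis description in Corollary~\ref{basis}: the sets $(\U(S_1) \cup \cdots \cup \U(S_n)) \cap \V(T_1) \cap \cdots \cap \V(T_m)$ intersected with $Q^*(D)$ furnish a basis for both topologies simultaneously, because quasicompact open sets in $Q^*(D)$ are exactly the intersections of quasicompact open sets of $L(D)$ with $Q^*(D)$ (using that $Q^*(D)$ is patch closed and hence that its Zariski closed sets are traces of Zariski closed sets of $L(D)$). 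I do not anticipate any real obstacle here; the only point requiring care is keeping the definitions of Zariski topology, patch topology, and subspace topology straight, so that the identification of topologies is justified and not merely asserted.
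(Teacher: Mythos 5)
Your proposal is correct and follows essentially the same route as the paper: both deduce spectrality of $Q^*(D)$ from Theorem~\ref{patch closure}, Proposition~\ref{L(D) spectral}, and the fact that a patch closed subspace of a spectral space is spectral (\cite[Tag 0902]{stacks-project}). The only cosmetic difference is that the paper cites the discussion after Theorem~1 of Hochster for the agreement of the patch topologies, whereas you verify it directly from the basis in Corollary~\ref{basis}; both justifications are standard and adequate.
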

 
 \begin{proof}
 As a patch closed subset of the spectral space $L(D)$, $Q^*(D)$ is a spectral space with respect to the Zariski topology \cite[Tag 0902]{stacks-project}. 
 That the  
  patch topology of the spectral space $Q^*(D)$ is the subspace topology of $Q^*(D)$ with respect to the  patch topology of $Q^*(D)$ follows from the discussion after Theorem~1 in
 \cite[p.~45]{Hoc}.
 \end{proof}
 
 \begin{corollary} \label{cor4.10}
  Every infinite subset of $Q^*(D)$ has a patch limit point in $Q^*(D)$. 
 \end{corollary}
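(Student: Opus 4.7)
The plan is to deduce this from the fact, just established in Corollary~\ref{Q* spectral}, that $Q^*(D)$ is a spectral space whose patch topology agrees with the subspace topology inherited from the patch topology of $L(D)$. A defining feature of a spectral space is that its patch topology is quasicompact and Hausdorff (see \cite[Theorem~1]{Hoc}), so I would invoke this to reduce the corollary to a standard compactness argument.

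More precisely, I would argue as follows. Let $\S \subseteq Q^*(D)$ be infinite, and suppose toward contradiction that $\S$ has no patch limit point in $Q^*(D)$. Then for each $R \in Q^*(D)$ there exists a patch open neighborhood $\mathcal{U}_R$ of $R$ in $Q^*(D)$ such that $\mathcal{U}_R \cap \S \subseteq \{R\}$ (using that the patch topology is Hausdorff, so points are closed, and that $R$ is not a patch limit point of $\S$). The family $\{\mathcal{U}_R : R \in Q^*(D)\}$ is a patch open cover of $Q^*(D)$. Since the patch topology on $Q^*(D)$ is quasicompact, finitely many of these sets $\mathcal{U}_{R_1},\ldots,\mathcal{U}_{R_n}$ cover $Q^*(D)$.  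Intersecting with $\S$ gives
\[
\S ~=~ \bigcup_{i=1}^n \bigl( \mathcal{U}_{R_i} \cap \S \bigr) ~\subseteq~ \{R_1,\ldots,R_n\},
\]
contradicting that $\S$ is infinite.  Hence $\S$ has a patch limit point in $Q^*(D)$.

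There is no real obstacle here: the content of the corollary is carried entirely by the spectrality of $Q^*(D)$ established in Corollary~\ref{Q* spectral}. The only point worth checking carefully is that a patch limit point in the spectral space $Q^*(D)$ is the same thing as a patch limit point of $\S$ taken in the ambient space $L(D)$ that happens to lie in $Q^*(D)$; this is immediate from the second assertion of Corollary~\ref{Q* spectral}, which identifies the patch topology of $Q^*(D)$ with the subspace topology induced from $L(D)$.
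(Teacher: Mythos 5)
Your proof is correct and follows essentially the same route as the paper: both deduce from Corollary~\ref{Q* spectral} (via \cite[Theorem~1]{Hoc}) that $Q^*(D)$ is quasicompact in the patch topology and then apply the standard fact that an infinite subset of a quasicompact space has an accumulation point. The paper phrases this last step by noting the patch closure of $\S$ is infinite and quasicompact, hence not discrete, while you give the direct open-cover argument; the difference is only cosmetic (and your appeal to Hausdorffness there is not even needed, since the cover argument uses only the definition of limit point).
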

 
 \begin{proof} Let $\S$ be an infinite subset of $Q^*(D)$. By Corollary~\ref{Q* spectral}, $Q^*(D)$ is a spectral space, so $Q^*(D)$ is quasicompact in the patch topology \cite[Theorem 1]{Hoc}. Therefore the patch closure of $\S$ in $Q^*(D)$ is infinite and quasicompact in the patch topology, hence not discrete. Consequently, the patch closure of $\S$ contains a patch limit point. 
 \end{proof}

\begin{corollary} \label{discrete} 
The set of patch isolated points of $Q^*(D)$ is  $Q(D)$. Thus $Q(D)$ is discrete  in the subspace topology induced by the patch topology of $Q^*(D)$.  \end{corollary}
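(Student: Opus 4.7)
The plan is to exhibit, for each $\alpha \in Q(D)$, a basic patch open neighborhood of $\alpha$ in $L(D)$ whose intersection with $Q^*(D)$ is exactly $\{\alpha\}$, and then to invoke Theorem~\ref{patch closure} to handle the remaining (valuation-ring) points of $Q^*(D)$.

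First I would fix a regular system of parameters $x, y$ for $\alpha$, so that $\m_\alpha = (x,y)\alpha$, and form
$$
\U \;:=\; L(\alpha) \,\cap\, \V(x/y) \,\cap\, \V(y/x).
$$
By Lemma~\ref{clopen} and Corollary~\ref{basis}, $\U$ is patch open in $L(D)$, and $\alpha$ itself lies in $\U$ because neither $x/y$ nor $y/x$ can belong to $\alpha$: if $x/y \in \alpha$, then $\m_\alpha = (y)\alpha$ would be principal, contradicting $\dim\alpha = 2$.

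Next I would verify that $\U \cap Q^*(D) = \{\alpha\}$. If $\gamma \in Q(D)$ properly dominates $\alpha$, then by Remark~\ref{note2.1}.8.b there is a unique $\beta \in Q_1(\alpha)$ dominated by $\gamma$, and every such $\beta$ is a localization of either $\alpha[x/y]$ or $\alpha[y/x]$. Hence $\beta$, and therefore $\gamma$, contains at least one of $x/y$, $y/x$, so $\gamma \notin \U$. For a valuation ring $V$ dominating $\alpha$, the totality of the value group of $V$ forces at least one of $x/y$, $y/x$ into $V$, so $V \notin \U$ as well. This shows $\alpha$ is patch isolated in $Q^*(D)$.

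For the converse, Theorem~\ref{patch closure} implies that every $V \in Q^*(D) \setminus Q(D)$ is a patch limit point of $Q(D)$ in $L(D)$, hence also of $Q^*(D)$, so $V$ is not patch isolated. The discreteness of $Q(D)$ in the induced subspace topology is then automatic, since the singletons $\{\alpha\}$ with $\alpha \in Q(D)$ are relatively patch open in $Q(D) \subseteq Q^*(D)$. The only real content lies in the short verification that $\U \cap Q^*(D) = \{\alpha\}$, and I do not expect this to present a genuine obstacle.
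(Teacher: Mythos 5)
Your proof is correct and follows essentially the same route as the paper: isolate $\alpha$ inside the patch clopen set $L(\alpha)$ (Lemma~\ref{clopen}) by intersecting with $\V(x/y)\cap\V(y/x)$ for a regular system of parameters, and then use Theorem~\ref{patch closure} to see that the valuation rings in $Q^*(D)$ are patch limit points of $Q(D)$ and hence not isolated. The only point worth making explicit is the appeal to Corollary~\ref{Q* spectral}, which identifies the intrinsic patch topology of the spectral space $Q^*(D)$ with the subspace topology induced by the patch topology of $L(D)$, so that your basic neighborhood argument indeed shows $\alpha$ is patch isolated in $Q^*(D)$.
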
 

\begin{proof} 
Let $\alpha \in Q(D)$.   By Lemma~\ref{clopen}, $L(\alpha)$ is patch clopen in $L(D)$.  
 Since 
 $Q^*(\alpha) = Q^*(D) \cap L(\alpha),$ we have that $Q^*(\alpha)$ is clopen in the subspace topology of  $Q^*(D)$ induced by the patch topology on $L(D)$.  
 To see now that $\alpha$ is a patch isolated point in $Q^*(D)$, let $s,t$ be a system 
  of regular parameters for $\alpha$.  Since every local ring in $Q^*(\alpha) \setminus \{\alpha\}$ contains either $\alpha[s/t]$ or $\alpha[t/s]$, we have $$\{\alpha\} = Q^*(\alpha) \cap {\mathcal{V}}(s/t) \cap {\mathcal{V}}(t/s).$$ Therefore  $\{\alpha\}$ is  open in the subspace topology of $Q^*(D)$ induced by the patch topology of $L(D)$, and hence by Corollary~\ref{Q* spectral}, $\alpha$ is a patch isolated point of $Q^*(D)$.   Since every valuation ring in $Q^*(D)$ is by Theorem~\ref{patch closure} a patch limit point of $Q(D)$, Corollary~\ref{discrete} now follows. 
\end{proof}

\begin{discussion} We summarize some of the key features of the patch topology on $Q(D)$ and $Q^*(D)$.  The patch topology on $Q^*(D)$ is  Hausdorff and has a basis of clopens (i.e., the patch topology is zero-dimensional). With the subspace topology induced by the patch topology, $Q(D)$ is a discrete space (Corollary~\ref{discrete}), and hence the patch topology is not interesting intrinsically for $Q(D)$. With respect to the patch topology, it is only when viewed as a subspace of $Q^*(D)$ that $Q(D)$ becomes topologically interesting. 
Indeed, 
 the set of patch limit points of any infinite subset $\S$ of $Q(D)$ must consist exclusively of valuation rings in $Q^*(D)$ (Corollaries~\ref{cor4.10} and~\ref{discrete}). Which valuation rings are patch limit points of $\S$ is determined by the configuration of $S$ within the tree $Q(D)$; see Theorem~\ref{minimal V}.  The calculation of these limit points is an important tool in the next section when we work with the Zariski topology on $Q(D)$ and $Q^*(D)$.  
\end{discussion}

As an application of some of the ideas in this section, we describe 
the  structure of an intersection of order valuation rings of rings below a fixed level in $Q(D)$.  Recall that an integral domain $R$ is an {\it almost Dedekind domain} if for each maximal ideal $M$ of $R$, $R_M$ is a DVR.  

\begin{corollary} \label{aDd} Let $n >0$, and let $\S$ be a nonempty subset of $Q(D)$ consisting of rings of level at most $n$. Then the intersection of the order valuation rings of the rings in $\S$ is an almost Dedekind domain.
\end{corollary}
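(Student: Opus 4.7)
The plan is to show that $A := \bigcap_{\alpha \in \S} \ord_\alpha$ is a one-dimensional Pr\"ufer domain, which is equivalent to being almost Dedekind. Integral closure of $A$ is automatic since $A$ is an intersection of DVRs. Setting $\T := \{\ord_\alpha : \alpha \in \S\} \subseteq Q^*(D)$, I would use that each basic set $\U(a) = \{V \in L(D) : a \in V\}$ is patch clopen (Corollary~\ref{basis}) to conclude $A = \bigcap_{V \in \overline{\T}} V$, where $\overline{\T}$ denotes the patch closure of $\T$ in $Q^*(D)$. By Corollary~\ref{Q* spectral}, $\overline{\T}$ is patch compact.

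The key step is to rule out rank-two limits, i.e., to show $\overline{\T} \cap X(D) = \emptyset$. Supposing for contradiction that $W \in X(D) \cap \overline{\T}$, I would write $W = \bigcup_{i \ge 0} R_i$ with $R_i \in Q_i(D)$ (Remark~\ref{note2.1}.8); then $W$ belongs to the patch clopen set $L(R_{n+1})$ (Lemma~\ref{clopen}). I claim $L(R_{n+1}) \cap \T = \emptyset$, contradicting that $W$ is a patch limit of $\T$. For any $\alpha \in \S$ with level at most $n$, the DVR $\ord_\alpha$ dominates only the rings in the chain from $D$ to $\alpha$: if $R \in Q(D)$ properly dominates $\alpha$, then a regular parameter $u/t$ of the first quadratic transform of $\alpha$ toward $R$ (with $\m_\alpha = (t,u)$ and $\m_\alpha R = tR$) satisfies $\ord_\alpha(u/t) = \ord_\alpha(u) - \ord_\alpha(t) = 0$, so $\m_R \not\subseteq \m_{\ord_\alpha}$. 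Since the level of $R_{n+1}$ is $n+1 >$ level$(\alpha)$, $\ord_\alpha$ does not dominate $R_{n+1}$, so $\ord_\alpha \notin L(R_{n+1})$, verifying the claim. Hence $\overline{\T}$ consists only of DVRs, namely prime divisors of the second kind.

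With $A$ equal to the intersection of a patch compact family of DVRs, standard spectral-theoretic techniques for Pr\"ufer domains (cf.\ \cite{OZar}) conclude the proof. Explicitly, for each $f \in F^\times$ the patch clopens $\{V \in \overline{\T} : \ord_V(f) > 0\}$ and $\{V \in \overline{\T} : \ord_V(f) < 0\}$, together with $\{V : \ord_V(f) = 0\}$, partition $\overline{\T}$; patch compactness allows constructing $s \in A$ with $s \in (A : f)$ and $1 - s \in (A : f^{-1})$, yielding the Pr\"ufer condition $(A:f) + (A:f^{-1}) = A$. The rank-one nature of each DVR in $\overline{\T}$ gives $\dim A = 1$, completing almost Dedekindness.

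The main obstacle is the second paragraph's case analysis, particularly the proximity-type identity $\ord_\alpha(u/t) = 0$ that verifies that $\ord_\alpha$ dominates no ring of level greater than level$(\alpha)$. This is in the spirit of the computations behind Theorems~\ref{minimal V} and~\ref{patch closure}, and it is the precise place where the level bound $n$ enters the argument topologically.
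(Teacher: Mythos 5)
Your route to the compactness input is sound and runs parallel to the paper's, with a different mechanism: the paper gets patch-closedness of the relevant set of order valuation rings by recognizing it, via Theorems~\ref{minimal V} and~\ref{patch closure}, as the set of patch limit points of a level-bounded subset of $Q(D)$, and then quasicompactness comes from Corollary~\ref{Q* spectral}; you instead show directly that the patch closure of $\T=\{\ord_\alpha : \alpha\in\S\}$ contains no member of $X(D)$ (members of $Q(D)$ are excluded automatically, being patch isolated by Corollary~\ref{discrete}), using the level bound and Lemma~\ref{clopen}. That variant is acceptable in substance, though your verification that $\ord_\alpha$ dominates no ring of level greater than that of $\alpha$ relies on choosing generators $\m_\alpha=(t,u)$ with $u/t$ a regular parameter of the quadratic transform toward $R$; such a choice need not exist when the residue field is not algebraically closed (the complementary parameter of the transform may be $f(u/t)$ with $\deg f>1$). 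The clean argument is that the center of $\ord_\alpha$ on the quadratic transform $\alpha_1$ toward $R$ is the height-one prime $\m_\alpha\alpha_1$ (Remark~\ref{rm3.4}.1), so $\ord_\alpha$ does not dominate $\alpha_1$, hence not $R$; one also needs the remark that rings of $Q(D)$ dominated by a fixed valuation ring form a chain, to reduce to the case that $R$ contains $\alpha$.

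The genuine gap is in the endgame. First, the Pr\"ufer condition $(A:f)+(A:f^{-1})=A$ is asserted, not proved: the existence of $s\in A$ with $s\in(A:f)$ and $1-s\in(A:f^{-1})$ does not follow from patch compactness by a routine covering argument, since one must produce elements of the intersection ring $A$ with prescribed behaviour across the whole compact family; this approximation problem is exactly the nontrivial content of \cite{OPrin}. Second, and decisively, the claimed equivalence of ``one-dimensional Pr\"ufer domain'' with ``almost Dedekind domain'' is false: almost Dedekind requires the localizations at maximal ideals to be \emph{discrete} rank one valuation rings, whereas a one-dimensional Pr\"ufer domain (even a completely integrally closed one, as any intersection of DVRs is) can localize to rank-one nondiscrete valuation rings. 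Excluding such nondiscrete localizations for intersections of quasicompact families of DVRs is precisely what the principal ideal theorem of \cite{OPrin} accomplishes, and the paper's proof finishes by invoking \cite[Corollary 5.8]{OPrin}: the intersection of a Zariski-quasicompact set of DVRs whose maximal ideals have nonzero common intersection (here all contain $\m_D$) is almost Dedekind. So your argument correctly supplies the quasicompactness hypothesis but neither proves nor elementarily bypasses this final ingredient; as written, it would at best yield a one-dimensional Pr\"ufer domain, which is strictly weaker than the statement of Corollary~\ref{aDd}.
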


\begin{proof}  
Since every ring properly between an almost Dedekind domain and its quotient field is almost Dedekind, 
it suffices to prove  the lemma in the case in which $\S$ is the set of all of rings in $Q(D)$ of level  at most $n$. 
By Theorems~\ref{minimal V} and~\ref{patch closure}, the patch closure of $\S$ consists of $\S$ and  the set $\T$ of all prime divisors $V$ of $D$ of the second kind such that $V$ is the order valuation ring of a ring in $Q(D)$ of level {at most} $n-1$.  As the set of patch limit points of $\S$,  $\T$ is a patch closed subset of $Q^*(D)$. 
{By Corollary~\ref{Q* spectral}, $Q^*(D)$ is a spectral space and hence quasicompact in the patch topology \cite[Theorem~1]{Hoc}. 
Therefore, as a patch closed subspace of $Q^*(D)$, $\T$ is quasicompact in the patch topology. Since the patch topology is finer than the Zariski topology, $\T$ is quasicompact in the Zariski topology. 
The intersection of rings in a Zariski quasicompact set of DVRs of a field having the property that the intersection of the maximal ideals of these DVRs is nonzero is an almost Dedekind domain \cite[Corollary 5.8]{OPrin}. Therefore  
the intersection of the order valuation rings of the rings in $\S$
is an almost Dedekind domain.}
\end{proof} 

Corollary~\ref{aDd} need not remain valid if the restriction  on the levels of the rings in $Q(D)$ is removed. For example, the intersection of the order valuations rings of all the rings in $Q(D)$ (i.e., the intersection of all prime divisors of the second kind) is $D$. {To see this, suppose that  $x \in F \setminus D$. If $x^{-1} \in D$, then no prime divisor of the second kind contains $x$. Otherwise, if $x^{-1} \not \in D$, then no prime divisor of the second kind  of $D[x^{-1}]_{({\ff m}_D, x^{-1})D}$ contains $x$.  In either case, we find a prime divisor of the second kind not containing $x$, from which it follows that  
$D$ is the intersection of the prime divisors of the second kind.  In contrast with the situation in Corollary~\ref{aDd}, the set of prime divisors of the second kind of $D$ (with no restriction on level) is not quasicompact, and hence we may not appeal to \cite[Corollary 5.8]{OPrin} as we did in the proof of Corollary~\ref{aDd}.  
}

From Corollary~\ref{aDd} we deduce a representation theorem for rings in $\R(D)$ obtained as an intersection of rings of level at most $n$ in $Q(D)$.  

\begin{corollary} \label{cor4.13}
Let $n >0$, and let $\U$ be a nonempty subset of $Q(D)$ consisting of rings of level at most $n$. Then $\O_\U$ is  the  intersection of a  PID that is a 
localization of $D$ and an almost Dedekind overring. 
\end{corollary}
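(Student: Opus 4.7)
We split the essential valuation rings of each $\gamma \in \U$ into those lying in $\E(D)$ and the order valuation rings of earlier points, then recombine across $\U$ to present $\O_\U$ as $A \cap B$.

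Since each $\gamma \in \U$ is a two-dimensional regular local ring, it is a UFD and hence a Krull domain, so $\gamma = \bigcap_{V \in \E(\gamma)} V$. By Remark~\ref{rm3.4}, each $V \in \E(\gamma)$ either lies in $\E(D)$ or is the order valuation ring $\ord_\alpha$ of some $\alpha \in Q(D)$ of level strictly less than that of $\gamma$, hence of level at most $n-1$. Setting
\[
\Sigma_1 = \bigcup_{\gamma \in \U}\bigl(\E(\gamma) \cap \E(D)\bigr), \qquad
\Sigma_2 = \bigcup_{\gamma \in \U}\bigl(\E(\gamma) \setminus \E(D)\bigr),
\]
and $A_0 = \bigcap_{V \in \Sigma_1} V$, $B_0 = \bigcap_{V \in \Sigma_2} V$, we obtain $\O_\U = A_0 \cap B_0$. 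Since $\Sigma_2$ consists of order valuation rings of rings in $Q(D)$ of level at most $n-1$, Corollary~\ref{aDd} makes $B_0$ an almost Dedekind overring of $D$ whenever $\Sigma_2 \ne \emptyset$; and in any case $D \subseteq B_0$, since every $\ord_\alpha$ contains $D$.

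The ring $A_0 = D_T$ with $T = D \setminus \bigcup_{D_\mathfrak{p} \in \Sigma_1} \mathfrak{p}$ is a localization of $D$. Suppose first that $\Sigma_1 \subsetneq \E(D)$, and choose a prime element $f \in \mathfrak{m}_D$ with $D_{(f)} \notin \Sigma_1$; since no other height-one prime of the UFD $D$ contains $f$, we have $f \in T \cap \mathfrak{m}_D$, so $\mathfrak{m}_D A_0 = A_0$. Thus $A_0$ has dimension at most one, and as a Noetherian UFD it is a PID. Setting $A = A_0$ and $B = B_0$ gives the required decomposition, with $\Sigma_2$ nonempty since the strict containment $\Sigma_1 \subsetneq \E(D)$ precludes $\U = \{D\}$.

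If instead $\Sigma_1 = \E(D)$, then $A_0 = D$ and $\O_\U = D \cap B_0 = D$. For any prime element $f_0 \in \mathfrak{m}_D$ the identity $D = D[1/f_0] \cap D_{(f_0)}$ writes $D$ as the intersection of a one-dimensional Noetherian UFD localization of $D$ (a PID) and a DVR (an almost Dedekind overring of $D$). This degenerate case $\O_\U = D$ (which occurs when $D \in \U$, or more generally when $\U$ collectively realizes all of $\E(D)$ among the essential valuation rings of its members) is the one real obstacle to a uniform argument; outside of it the proof reduces to a direct application of Corollary~\ref{aDd} together with the fact that a proper localization of the two-dimensional Noetherian UFD $D$ is one-dimensional and hence a PID.
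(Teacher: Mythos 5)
Your proof is correct and takes essentially the same approach as the paper's: you split the essential valuation rings of the members of $\U$ into prime divisors of the first kind (whose intersection is a PID localization of $D$, since a prime element of $\m_D$ gets inverted) and order valuation rings of points of level at most $n-1$ (whose intersection is almost Dedekind by Corollary~\ref{aDd}), and you dispose of the degenerate case with $D = D[1/z] \cap D_{zD}$ exactly as the paper does. The only cosmetic differences are that the paper takes $A$ to be the intersection of \emph{all} first-kind prime divisors containing $\O_\U$ and splits cases on whether $\O_\U = D$, whereas you use only those essential for some $\gamma \in \U$ and split on whether $\Sigma_1 = \E(D)$.
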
 

\begin{proof} 
If $\O_\U = D$, then for a prime element $z$ of $D$, $\O_\U = D[1/z] \cap D_{zD}$ is  the
 intersection of a  PID that is a localization of $D$  and a DVR. 
 Suppose that $\O_\U \ne D$.   
 Let $A$ be the intersection of all  the prime divisors of  the  first kind that contain $\O_\U$. 
 Since $A \ne D$, $A$ is a PID that is a localization of $D$ obtained by inverting the prime 
 elements associated to prime divisors of the first kind that do not contain $A$.
Each $\alpha \in \U$ is an intersection of prime divisors of $D$ of the first kind and 
at most 2  prime divisors of $D$ of the second kind, each of which is an order valuation ring of 
some $\beta \in Q(D)$ of level less 
 than the level of $\alpha$,  see {the discussion after} Remark~\ref{rm3.4}.
 Let $B$ be the intersection of these valuation rings. 
 By Corollary~\ref{aDd}, $B$ is an almost Dedekind domain. Since $\O_\U = A \cap B$, 
 this proves  Corollary~\ref{cor4.13}.
\end{proof}

\section{The Zariski topology of $Q(D)$}   \label{sec5}

We describe in this section the Zariski topology of    $Q(D)$ and $Q^*(D)$ using the fact from Remark~\ref{recover} that the Zariski closure of a subset  $\S$ of $Q^*(D)$ is the downset of the patch closure of  $\S$.  For the purposes of several results in this section, we need the following notation to distinguish among the patch limit points of a subset $\S$ of $Q(D)$ those that are prime divisors of the second kind.  

 \begin{notation} \label{S infinity}
Let  $\S$ be a nonempty subset of $Q(D)$. We let $\S_\infty$ be the set of prime divisors $V$ of the second kind of $D$ such that $V$ contains infinitely many incomparable rings in $\S$.  By Theorem~\ref{minimal V}, $\S_\infty$ is precisely the set of patch limit points of $\S$ that are prime divisors of $D$ of the second kind.

 \end{notation}


Theorem~\ref{Zariski closure}     
implies that the Zariski closure of a  nonempty subset  $\S$  of $Q(D)$ 
can be calculated from how $\S$ is situated in the partially ordered set $Q^*(D)$. 
 
  \begin{theorem}  \label{Zariski closure} If $\S$ is a nonempty subset of $Q(D)$, then 
  the Zariski closure  of $\S$ in   $Q(D)$ is $ \downarrow \hspace{-.03in} (\S \cup  \S_\infty)$.
    \end{theorem}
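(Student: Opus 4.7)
The plan is to apply Remark~\ref{recover}: the Zariski closed subsets of $L(D)$ are exactly the downsets (under $\subseteq$) of patch closed subsets. Restricted to $Q(D)$, this means the Zariski closure of $\S$ in $Q(D)$ is the set of rings in $Q(D)$ contained in some member of the patch closure of $\S$ in $L(D)$. So I will first identify this patch closure and then take its downset in $Q(D)$.

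By Theorem~\ref{patch closure}, the patch closure of $Q(D)$ in $L(D)$ is $Q^*(D)$; hence the patch closure of $\S$ lies in $Q^*(D)$ and consists of $\S$ together with the valuation rings in $Q^*(D)$ that are patch limit points of $\S$. Theorem~\ref{minimal V} classifies these valuation-ring limit points into two types: (i) the prime divisors of the second kind lying in $\S_\infty$ (by Notation~\ref{S infinity}), and (ii) the minimal valuation overrings $V \in X(D)$ for which $\S \cap Q(\alpha)$ is nonempty for every $\alpha \in Q(D)$ dominated by $V$. Since $\S \cup \S_\infty$ is contained in the patch closure of $\S$, the inclusion $\downarrow \hspace{-.03in} (\S \cup \S_\infty) \subseteq $ (Zariski closure of $\S$ in $Q(D)$) is immediate from Remark~\ref{recover}.

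The main step, and where I expect the real work, is the reverse inclusion, which reduces to showing that rings in $Q(D)$ below a type~(ii) limit contribute nothing beyond $\downarrow \hspace{-.03in} \S$. Suppose $V \in X(D)$ is a type~(ii) patch limit of $\S$ and $\beta \in Q(D)$ with $\beta \subseteq V$; I will show $\beta \in \downarrow \hspace{-.03in} \S$. By Proposition~\ref{prop1.6}.4 together with Remark~\ref{note2.1}.8.b, $V$ is the directed union of its infinite quadratic sequence $D = \alpha_0 \subsetneq \alpha_1 \subsetneq \cdots$ along $V$, so $\beta \subseteq \alpha_i$ for some $i$. By Proposition~\ref{prop1.6}.1, the regular local overrings of $D$ that are contained in $\alpha_i$ form the linear chain $D \subsetneq \alpha_1 \subsetneq \cdots \subsetneq \alpha_i$, and hence $\beta = \alpha_j$ for some $j \le i$; in particular, $V$ dominates $\beta$. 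Applying the type~(ii) hypothesis with $\alpha = \beta$ then produces $\gamma \in \S \cap Q(\beta)$, giving $\beta \subseteq \gamma \in \S$ and thus $\beta \in \downarrow \hspace{-.03in} \S$.

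Combining the two inclusions gives the Zariski closure of $\S$ in $Q(D)$ as exactly $\downarrow \hspace{-.03in} \S \cup \downarrow \hspace{-.03in} \S_\infty = \downarrow \hspace{-.03in} (\S \cup \S_\infty)$, as claimed.
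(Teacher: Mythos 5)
Your proposal follows the same route as the paper's proof: Remark~\ref{recover} reduces the problem to computing the downset of the patch closure of $\S$, Theorem~\ref{patch closure} shows the patch limit points are valuation rings, and Theorem~\ref{minimal V} splits them into the prime divisors in $\S_\infty$ and the minimal valuation overrings, after which the downset computation is the same as in the paper. The only step you should tighten is the inference ``$\beta \subseteq V = \bigcup_i \alpha_i$, hence $\beta \subseteq \alpha_i$ for some $i$'': a subring of a directed union need not be contained in one of the members, and even essential finite generation of $\beta$ over $D$ (Proposition~\ref{prop1.6}.2) does not immediately settle this because of the localization. The conclusion you want is true, and the clean repair is to look at the center $\q = \m_V \cap \beta$, which contains $\m_D$: if $\q = \m_\beta$, then $V$ dominates $\beta$ and the uniqueness of the quadratic sequence along $V$ forces $\beta = \alpha_j$ for some $j$; if instead $\q$ has height one, then $V$ contains, hence equals, the DVR $\beta_\q$, which is either $D_\p$ for a height one prime $\p$ of $D$ (and so does not dominate $D$) or an order valuation ring of a point in the chain from $D$ to $\beta$, i.e.\ a prime divisor of the second kind; either case contradicts $V \in X(D)$. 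With that substitution, your handling of the $X(D)$ case is exactly what the paper's appeal to Theorem~\ref{minimal V}.1 implicitly relies on (namely, that a ring of $Q(D)$ contained in a minimal valuation overring is dominated by it), and the rest of your argument coincides with the paper's proof.
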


\begin{proof}  
  By Remark~\ref{recover} the Zariski closure of $\S$  in $Q^*(D)$ is the downset of the patch closure $\overline{\S}$ of $\S$ in $Q^*(D)$, and so $\alpha \in Q(D)$ is in the Zariski closure of $\S$ in $Q(D)$ if and only if $\alpha$ is  in the downset of  $\overline{\S}$. Therefore to prove the theorem it suffices to show that $\downarrow \hspace{-.03in} \overline{\S} = \: \downarrow \hspace{-.03in} (\S \cup  \S_\infty)$. 
  By Theorem~\ref{minimal V}.2, $\S \cup \S_\infty \subseteq \overline{\S}$, and so 
  $ \downarrow \hspace{-.03in} (\S \cup  \S_\infty)\subseteq \:
  \downarrow \hspace{-.03in} \overline{\S}$.  
   To prove the reverse inclusion, 
  let $\alpha \in  \: \downarrow \hspace{-.03in} \overline{\S}$. Then there exists $R \in \overline{S}$ such that $\alpha \subseteq R$.   
     Since $\overline{\S}$ is the  union of  $\S$ and the set of patch limit points of $\S$ in $Q^*(D)$, we have either $R \in \S$, and hence $\alpha \in  \:  \downarrow \hspace{-.03in} \S$,  or $R$ is a patch limit point of $\S$ in $Q^*(D)$.  
     In the former case, we have $\alpha \in \: \downarrow \hspace{-.03in} (\S \cup  \S_\infty)$. To see that this is also true in the latter case, 
     suppose $R$   is a patch limit point of $\S$ in $Q^*(D)$. 
   By Theorem~\ref{patch closure}, $R$ is a valuation ring. 
   If $R \in X(D)$, then by Theorem~\ref{minimal V}.1 the fact that $\alpha$ is a subring of  $R$ implies that the local ring $\alpha$ is dominated by some member of $\S$. In this case, $\alpha \in \: \downarrow \hspace{-.03in} \S$.  
   On the other hand, if $R \not \in X(D)$, then  Theorem~\ref{minimal V}.2  
 implies $R$ contains infinitely many incomparable  rings in $\S$  and hence $R \in \S_\infty$.  Thus $\alpha  
 \in \: \downarrow \hspace{-.03in} (\S \cup  \S_\infty)$. This proves that   $\downarrow \hspace{-.03in} \overline{\S}   \subseteq \: \downarrow \hspace{-.03in} (\S \cup  \S_\infty)$, which completes the proof.  
\end{proof}

\begin{remark} A downset $\S$ in $Q(D)$ need not be Zariski closed. For example, let $\S =  Q_1(D) \cup \{D\}$. By Theorem~\ref{minimal V}, the order valuation ring $V$ of $D$ is the only 
 patch limit point of $\S$.  Therefore, by Theorem~\ref{Zariski closure}, the Zariski closure $\overline{\S}$ of $\S$ in $Q(D)$ is the set $P(D)$ of all points in $Q(D)$ that are proximate to $D$. Choose $W$ to be any rank two valuation overring of $D$ contained in $V$. By Remark~\ref{note2.1}.8.b there is an infinite chain of rings in $Q(D)$ contained in $W$ and hence in $\overline{\S}$. Only two of these rings are in $\S$, so  $\overline{\S} = P(D)$ is  larger than the downset $\S$.  
 \end{remark} 
 
 We focus next on the Noetherian subspaces of $Q^*(D)$. To simplify terminology, we use the following definition.

\begin{definition} A subset $\S$ of $Q^*(D)$ is {\it Noetherian} if it is a Noetherian space in the Zariski topology. \end{definition}


In Theorem~\ref{noether down} we characterize the Noetherian subspaces  of $Q^*(D)$. 
As a step in doing so, we show that the Zariski closure in $Q^*(D)$ of a  Noetherian subspace of $Q^*(D)$ is also a Noetherian space. Proving that  this is the case involves analyzing  the irreducible components of subsets of $Q(D)$ in terms  of downsets as defined in  Notation~\ref{downset}.

\begin{lemma} \label{irreducible}   
 A nonempty Zariski closed subset  $\S$ of $Q^*(D)$ is irreducible if and only 
 if $\S = \: \downarrow^* \hspace{-.02in} R$ for some $R \in Q^*(D)$.    Moreover, 
the following are equivalent for a nonempty    Zariski closed subset  $\S$ of $Q(D)$. 
\begin{enumerate}
\item
$\S$  is irreducible.

\item The Zariski closure of $\S$ in $Q^*(D)$ is irreducible. 

\item 
$\S = \: \downarrow \hspace{-.03in} R$ for some $R \in Q^*(D)$.\footnote{{If the 
set $\S$ is finite,  then $R \in Q(D)$.}}

\end{enumerate}

\end{lemma}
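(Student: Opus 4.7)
My strategy is to first establish the classification of irreducible closed subsets of $Q^*(D)$ directly from the spectral space structure supplied by Corollary~\ref{Q* spectral}, and then to lift it to the three-fold equivalence for subsets of $Q(D)$ using the relationship between the Zariski and patch topologies described in Remark~\ref{recover} and Theorem~\ref{minimal V}. For the first assertion, since $Q^*(D)$ is spectral, every nonempty irreducible Zariski closed subset is the closure of a unique point, so it suffices to compute $\overline{\{R\}}$ for each $R \in Q^*(D)$. The patch topology on $Q^*(D)$ is Hausdorff (Corollary~\ref{Q* spectral}), so $\{R\}$ is patch closed, and Remark~\ref{recover} then identifies the Zariski closure of a patch closed set with its downset. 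Hence $\overline{\{R\}} = \: \downarrow^*\hspace{-.02in} R$, giving the claimed classification.

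For the three-fold equivalence, the implication (1) $\Leftrightarrow$ (2) is pure point-set topology: a nonempty subspace $Y$ of a space $X$ is irreducible iff its closure in $X$ is irreducible, since covers of $Y$ by two relatively closed sets correspond to covers of $\overline{Y}$ by two closed sets of $X$. For (2) $\Leftrightarrow$ (3), I would use that, since $\S$ is Zariski closed in $Q(D)$, we have $\S = \overline{\S} \cap Q(D)$, where $\overline{\S}$ is the Zariski closure in $Q^*(D)$. By the first assertion, $\overline{\S}$ is irreducible iff $\overline{\S} = \: \downarrow^*\hspace{-.02in} R$ for some $R \in Q^*(D)$, and intersecting with $Q(D)$ then produces $\S = \: \downarrow\hspace{-.03in} R$. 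Conversely, given $\S = \: \downarrow\hspace{-.03in} R$, I would verify that $R \in \overline{\S}$: this is immediate when $R \in Q(D)$, and when $R$ is a valuation ring, $R$ is a patch limit point of $\S$ by Theorem~\ref{minimal V}. Specifically, if $R \in X(D)$, every $\alpha$ dominated by $R$ already lies in $\S$, fulfilling condition (b) of Theorem~\ref{minimal V}.1; and if $R = \ord_\alpha$ is a prime divisor of the second kind, the infinite set $Q_1(\alpha) \subseteq P(\alpha)$ lies in $\S$, fulfilling condition (b) of Theorem~\ref{minimal V}.2. Since the patch topology refines the Zariski topology, $R \in \overline{\S}$, and combining $\overline{\{R\}} = \: \downarrow^*\hspace{-.02in} R \subseteq \overline{\S}$ with $\overline{\S} \subseteq \: \downarrow^*\hspace{-.02in} R$ (the latter because $\downarrow^*\hspace{-.02in} R$ is Zariski closed and contains $\S$) forces $\overline{\S} = \: \downarrow^*\hspace{-.02in} R$, which is irreducible.

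The footnote falls out of the same analysis: if $R \in Q^*(D) \setminus Q(D)$, then $R$ is a valuation ring and $\downarrow\hspace{-.03in} R$ is necessarily infinite, since for $R \in X(D)$ it contains the infinite quadratic sequence whose union is $R$ (Proposition~\ref{prop1.6}.4), and for $R = \ord_\alpha$ a prime divisor of the second kind it contains the infinite set $Q_1(\alpha)$. Hence $\S$ finite forces $R \in Q(D)$. I expect the main technical hurdle to be the converse direction (3) $\Rightarrow$ (2), where one must realize the valuation ring $R$ coming from the hypothesis $\S = \: \downarrow\hspace{-.03in} R$ as the generic point of $\overline{\S}$; this is precisely where the patch-limit characterization in Theorem~\ref{minimal V} is indispensable, while the remaining ingredients are spectral-space and subspace-topology formalities.
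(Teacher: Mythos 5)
Your proposal is correct, and it reaches the lemma by a route that differs from the paper's at the decisive step. The shared skeleton is the first assertion (sobriety of the spectral space $Q^*(D)$ plus the identification of the point closure $\overline{\{R\}}$ with $\downarrow^* \hspace{-.02in} R$ via Remark~\ref{recover}) and the implication (2) $\Rightarrow$ (3) by intersecting $\overline{\S} = \: \downarrow^* \hspace{-.02in} R$ with $Q(D)$. Where you diverge is in showing, for $\S = \: \downarrow \hspace{-.03in} R$ with $R$ a valuation ring, that $R$ lies in the Zariski closure of $\S$: the paper argues directly in the Zariski topology, checking that every basic open set $\U(x_1,\ldots,x_n)$ containing $R$ meets $\S$, and in the prime-divisor case it manufactures a minimal valuation ring $V \subseteq R$ containing $x_1,\ldots,x_n$ by a residue-field/composite-construction argument and then uses that $V$ is a directed union of rings of $Q(D)$ (all of which lie in $\S$); it then deduces irreducibility of $\S$ itself by the two-closed-sets argument, so (1) $\Rightarrow$ (2) is proved separately by a density argument and the cycle is closed. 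You instead recognize $R$ as a patch limit point of $\S$ via Theorem~\ref{minimal V} (conditions (1)(b) and (2)(b) are immediate because $\S$ is the full downset of $R$), use that the patch closure is contained in the Zariski closure, conclude $\overline{\S} = \overline{\{R\}}$ is irreducible, and handle (1) $\Leftrightarrow$ (2) by the standard fact that a subspace is irreducible iff its closure is. Your route is shorter because it reuses the patch-limit machinery already established in Section 4, whereas the paper's argument is self-contained at this point; the only tacit ingredient you rely on is that $Q_1(\alpha)$ is infinite (needed to verify condition (b) of Theorem~\ref{minimal V}.2 and for the footnote), a standard fact that the paper also uses implicitly (for instance in the remark following Theorem~\ref{Zariski closure}), so it deserves a sentence but is not a gap. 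Your treatment of the footnote, via Proposition~\ref{prop1.6}.4 for $R \in X(D)$ and the infinitude of $Q_1(\alpha)$ for prime divisors of the second kind, is likewise sound.
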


\begin{proof} The first assertion is a consequence of the fact that $Q^*(D)$ is a spectral space with respect to the Zariski topology (Corollary~\ref{Q* spectral}), and hence an irreducible closed set has a unique generic point. 

(1)  $\Rightarrow$ (2) 
Suppose  $\S$ is an irreducible closed  subset of $Q(D)$, and let $\overline{\S}$ denote the Zariski closure of ${\S}$ in $Q^*(D)$. To prove that $\overline{\S}$ is irreducible, it suffices to show that the Zariski closure of each nonempty Zariski open set in $\overline{\S}$ is $\overline{\S}$.  
Let $\U$ be a nonempty Zariski open set in $\overline{\S}$. Since $\S$ is dense in $\overline{\S}$, the set $\U \cap \S$ is nonempty. Thus, since $\S$ is irreducible,
 the Zariski closure of $\U \cap \S$ in $\S$ is $\S$.
Consequently, the Zariski closure of $\U$  in $Q^*(D)$ is $\overline{\S}$.

(2) $\Rightarrow$ (3) 
Suppose $\overline{S}$ is irreducible. The first assertion of the theorem shows that $\overline{\S} = \: \downarrow^* \hspace{-.04in} R$ for some $R \in Q^*(D)$.    
Now $\S = \overline{\S} \cap Q(D) = \: (\downarrow^* \hspace{-.04in} R) \cap Q(D) = \: \downarrow \hspace{-.02in} R$, which verifies (3).

 (3) $\Rightarrow$ (1)  Suppose that ${\S} = \: \downarrow \hspace{-.03in} R$ for some ring $R \in Q^*(D)$.  Let $\overline{(\:)}$ denote Zariski closure in $Q^*(D)$.   
We claim  that $R \in \: \overline{\S}$. If $R \in Q(D)$, then $R \in \: \downarrow \hspace{-.03in} R \subseteq \S$, and the claim is clear. Suppose $R \not \in Q(D)$. 
Then $R$ is a valuation ring, and to show that $R \in \: \overline{S}$, it suffices to prove that every Zariski basic open subset of $Q^*(D)$ that contains $R$ has nonempty intersection with $\S$.  Let  $x_1,\ldots,x_n $ be elements of the quotient field $F$ of $D$ such that  $R \in \U(x_1,\ldots,x_n)$. Then $x_1,\ldots,x_n \in R$.  Since $R$ is a valuation ring in $Q^*(D)$, there are 
 two cases to  consider, that in which $R$ is a minimal valuation ring  of $D$  and 
 that in which $R$ is a prime divisor of $D$ of the second kind. If $R$ is a minimal valuation 
 ring of $D$, then  $R$ is  a directed union of rings from $Q(D)$ by Remark~\ref{note2.1}.8.b. 
 Thus the set $ \S \cap \U(x_1,\ldots,x_n)$ is nonempty since $x_1,\ldots,x_n \in R$. 
 
Suppose that $R$ is a prime divisor of $D$ of the second kind. Let ${\ff M}$ denote the maximal ideal of $R$.  Then $D/\m_D  = (D + {\ff M})/{\ff M} \subsetneq R/{\ff M}$, and $R/{\ff M}$ 
 is a finitely generated field extension  $D/\m_D$ 
 of transcendence degree one. Therefore  $(D[x_1,\ldots,x_n] + {\ff M})/{\ff M}$ is a proper 
 subring of $R/{\ff M}$, so there exists a rank one valuation of $R/{\ff M}$ that contains
 $D/\m_D$.   By composite construction,  there exists a minimal 
  valuation ring $V$ of $D$ with 
  $V \subseteq R$ such that $x_1,\ldots,x_n \in V$.  Since $V$ is a directed union of rings from $Q(D)$, we conclude again that the set $ \S \cap \U(x_1,\ldots,x_n)$ is nonempty. 
  In all cases, $ \S \cap \U(x_1,\ldots,x_n)$ is nonempty, which proves that $R$ is in the Zariski closure of $\S$ in $Q^*(D)$.

To prove now that $\S$ is irreducible,  suppose $\T_1$ and $\T_2$ are Zariski closed subsets of $\S$ such that $\S = \T_1 \cup \T_2$, then $\overline{\S} =\overline{\T_1} \cup \overline{\T_2}$, where $\overline{(\:)}$  denotes  Zariski closure  in $Q^*(D)$. 
 We have proved that  $R \in \overline{\S}$, so  without loss of generality $R \in \overline{\T_1}$. Thus $\S = \: \downarrow \hspace{-.03in} R \subseteq \overline{\T_1}$. Since $\T_1$ is closed in $\S$, we have then that $\S \subseteq \S \cap \overline{\T_1} = \T_1$, proving that $\S = \T_1$. This shows that $\S$ is irreducible. 
\end{proof} 


A consequence of 
Theorem~\ref{noether down}  is that whether a subset $\S$ of $Q(D)$ is Noetherian can 
be detected from order-theoretic properties of $Q^*(D)$.

\begin{theorem} \label{noether down}  The following are equivalent for a nonempty subset $\S$ of $Q^*(D)$.
\begin{enumerate}
\item $\S$  is Noetherian.

\item $\S$ has only finitely many irreducible components. 

\item There  are valuation rings 
 $V_1,\ldots,V_n$ in $ Q^*(D)$ such that  $\S \subseteq \: \downarrow^* \hspace{-.04in} \{V_1,\ldots,V_n\}$.  
 
 \item The Zariski closure of $\S$ in $Q^*(D)$ is Noetherian. 
 \end{enumerate}
 If also $\S \subseteq Q(D)$, then items 1--4 are equivalent to 
 \begin{enumerate}

 \item[{\em 5.}]  The Zariski closure of $\S$ in $Q(D)$ is Noetherian.
 \end{enumerate}
\end{theorem}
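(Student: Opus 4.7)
\emph{Plan.} The plan is to prove the cycle $(1) \Rightarrow (2) \Rightarrow (3) \Rightarrow (4) \Rightarrow (1)$ and then deduce $(4) \Leftrightarrow (5)$ when $\S \subseteq Q(D)$. Implication $(1) \Rightarrow (2)$ is the standard fact that any Noetherian topological space has only finitely many irreducible components. Implication $(4) \Rightarrow (1)$ follows because a subspace of a Noetherian space is Noetherian: $\S$ sits inside its Zariski closure in $Q^*(D)$. Similarly $(4) \Rightarrow (5)$ uses that the Zariski closure of $\S$ in $Q(D)$ equals the intersection of the Zariski closure of $\S$ in $Q^*(D)$ with $Q(D)$, and $(5) \Rightarrow (1)$ holds since $\S$ is a subspace of its closure in $Q(D)$.

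For $(2) \Rightarrow (3)$, let $\S_1, \ldots, \S_n$ be the irreducible components of $\S$. The Zariski closure of each $\S_i$ in $Q^*(D)$ is irreducible, so by Lemma~\ref{irreducible} it equals $\downarrow^* R_i$ for a unique $R_i \in Q^*(D)$. If $R_i$ is a valuation ring, set $V_i := R_i$; otherwise $R_i \in Q(D)$, and by Remark~\ref{note2.1}.8.b I may pick $V_i \in X(R_i) \subseteq Q^*(D)$. In either case $\S_i \subseteq \: \downarrow^* R_i \: \subseteq \: \downarrow^* V_i$, so $\S \subseteq \: \downarrow^* \{V_1, \ldots, V_n\}$.

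The heart of the proof is $(3) \Rightarrow (4)$. Since a finite union of Noetherian subspaces is Noetherian and a Zariski closed subspace of a Noetherian space is Noetherian, it suffices to prove that $\downarrow^* V$ is Noetherian for each valuation ring $V \in Q^*(D)$. When $V \in X(D)$, Remark~\ref{note2.1}.8.b shows that the rings of $Q(D)$ below $V$ form exactly the infinite quadratic sequence $D = R_0 \subset R_1 \subset \cdots$ whose union is $V$, and a direct valuation-theoretic check rules out additional valuation rings of $Q^*(D)$ strictly below $V$; hence $\downarrow^* V = \{V\} \cup \{R_i\}_{i \geq 0}$, whose Zariski closed subsets form a chain of order type $\omega + 1$, so DCC is immediate. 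The delicate case is $V = \ord_\alpha$, a prime divisor of the second kind. Here Remark~\ref{rm3.4} gives the decomposition $\downarrow^* V \setminus \{V\} = \bigcup_{\beta \in Q_1(\alpha)} \downarrow^* W_\beta$, where $W_\beta \in X(D)$ is the rank-two minimal valuation above the ray proximate to $\alpha$ through $\beta$, and each branch $\downarrow^* W_\beta$ is a chain consisting of the finite chain from $D$ to $\alpha$, the ray $\beta = \beta^{(0)} \subset \beta^{(1)} \subset \cdots$, and $W_\beta$ on top. I plan to show that any proper Zariski closed subset $C$ of $\downarrow^* V$ (i.e., $V \notin C$) meets only finitely many branches: since $C$ is a downset, meeting infinitely many branches would force $C \cap Q_1(\alpha)$ to be infinite, and Theorem~\ref{minimal V}.2 would then force $V = \ord_\alpha$ to be a patch limit of $C$. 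But Zariski closed subsets of the spectral space $Q^*(D)$ (Corollary~\ref{Q* spectral}) are patch closed, so $V \in C$, a contradiction. Therefore $C$ is a finite union of Noetherian chain subspaces and hence Noetherian, giving DCC for $\downarrow^* V$.

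The principal obstacle is precisely this branch-finiteness step for $V = \ord_\alpha$, which combines the patch-limit characterization from Theorem~\ref{minimal V} with the fact that Zariski closed sets in the spectral space $Q^*(D)$ are patch closed. Once $(3) \Rightarrow (4)$ is established, the Zariski closure $\overline{\S}$ in $Q^*(D)$ sits as a closed subspace of the Noetherian space $\bigcup_i \downarrow^* V_i$ and is therefore Noetherian, completing the cycle and the theorem.
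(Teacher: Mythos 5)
Your proposal is correct and follows the paper's overall architecture -- the same cycle of implications, the same use of Lemma~\ref{irreducible} for (2)$\Rightarrow$(3), and the same reduction of (3)$\Rightarrow$(4) to showing that $\downarrow^* V$ is Noetherian for a single valuation ring $V$, split into the cases $V \in X(D)$ and $V$ a prime divisor of the second kind -- but your treatment of the crucial second case is genuinely different. The paper proves a claim that no proper Zariski closed subset of $\downarrow^* V$ contains infinitely many incomparable rings; this forces a separate subcase (a closed set containing infinitely many valuation rings below $V$), handled by a density argument in the function field $V/\m_V$, and the claim is then exploited by extracting the finitely many maximal elements of a proper closed set via the specialization order of the spectral space. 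You instead decompose $\downarrow^* V \setminus \{V\}$ into the branches $\downarrow^* W_\beta$, $\beta \in Q_1(\alpha)$, and use the fact that Zariski closed sets are downsets to push everything down to $Q_1(\alpha)$: if $C \cap Q_1(\alpha)$ were infinite, Theorem~\ref{minimal V}.2 would make $V$ a patch limit point of $C$, contradicting patch closedness. This bypasses both the valuation-ring-density subcase and the maximal-element argument, and is arguably more elementary; what it costs is the structural input that the branches really exhaust $\downarrow^* V \setminus \{V\}$.

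On that point, two steps should be tightened. First, Remark~\ref{rm3.4} describes the rank-two valuation rings $W_\beta$ but does not by itself assert that they are the only valuation rings of $Q^*(D)$ properly contained in $V = \ord_\alpha$; you need the easy supplementary observation that such a valuation ring cannot be a prime divisor of the second kind (no DVR of $F$ is properly contained in the DVR $V$), so by Notation~\ref{not1.3}.5 it lies in $X(D)$, and then Remark~\ref{note2.1}.8.b together with Remark~\ref{rm3.4}.1 identifies it with the union of a proximate ray, i.e., with some $W_\beta$. Second, ``meets only finitely many branches'' must be read as ``contains points lying strictly above $\alpha$ in only finitely many branches,'' since every branch contains the chain from $D$ to $\alpha$; with that reading your downset argument does show that a proper closed $C$ is contained in a union of finitely many branches, each Noetherian by your first case, and the descending chain condition for $\downarrow^* V$ follows as you say. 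Finally, your appeal to ``Zariski closed sets of $Q^*(D)$ are patch closed'' tacitly uses that $C$, being closed in the Zariski closed subset $\downarrow^* V$ of $Q^*(D)$, is closed in $Q^*(D)$ itself; this is true (as $\downarrow^* V$ is the closure of the point $V$) but worth saying explicitly.
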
 

\begin{proof}  
(1) $\Rightarrow$ (2) This is clear since a Noetherian space has finitely many irreducible components \cite[Tag 0052]{stacks-project}.

(2) $\Rightarrow$ (3) 
Let $\S_1,\ldots,\S_n$ be the irreducible components of $\S$.  Let $\overline{(\ )}$ denote Zariski closure in $Q^*(D)$.   By Lemma~\ref{irreducible}, there exist rings $R_1,\ldots,R_n$ in $Q^*(D)$ such that $\overline{\S}_i =  \: \downarrow^* \hspace{-.04in} R_i$ for each $i$.  Thus $\S \subseteq \: \downarrow \hspace{-.03in} \{R_1,\ldots,R_n\}$.  For each $i$, let $V_i$ be a valuation ring in $Q^*(D)$ that dominates $R_i$.  Then  $\S \subseteq \:  \downarrow \hspace{-.03in} \{V_1,\ldots,V_n\}$, which verifies item 3. 

(3) $\Rightarrow$ (4) 
Suppose there are valuation rings 
 $V_1,\ldots,V_n$ in $ Q^*(D)$ such that  $\S \subseteq  \: \downarrow^* \hspace{-.04in} \{V_1,\ldots,V_n\}$.  Then the Zariski closure of $\S$ in $Q^*(D)$ is also a subset of the  Zariski closed set $\downarrow^*\hspace{-.04in} \{V_1,\ldots,V_n\}$. Since
 a subspace of  a Noetherian space is Noetherian, it suffices to prove that $\downarrow^* \hspace{-.04in} \{V_1,\ldots,V_n\}$ is Noetherian. Moreover, since  a finite union of Noetherian subspaces is Noetherian,  it suffices to show that every set of the form $C =  \: \downarrow^* \hspace{-.03in}V$, where $V$ is a valuation ring in $Q^*(D)$, is Noetherian.

Consider first the case that $V$ is a minimal valuation ring of $D$.  Then 
$\downarrow \hspace{-.03in}  V$ consists of the quadratic sequence $D = \alpha_0 \subseteq \alpha_1 \subseteq \cdots $ along $V$. The proper Zariski closed subsets of $C$ are precisely the sets $\downarrow \hspace{-.03in}  \alpha_i$. Since these sets are finite, the closed sets of $C$ satisfy the descending chain condition. Therefore $C$ is a Noetherian space in the Zariski topology. 

Next suppose that $V$ is not a minimal valuation ring of $D$. 
To deal with this case, we prove first the following claim.

\smallskip

{\textsc{Claim:}} If $V$ is not a minimal valuation ring, then no proper Zariski closed subset 
 of $C= \: \downarrow^* \hspace{-.03in}  V$ contains infinitely many incomparable rings. 

\smallskip

Let $B$ be a  Zariski closed set of $C$ that contains infinitely many incomparable rings.  
We prove $B = C$.  Either $B$ contains infinitely many incomparable rings in $Q(D)$ or $B$ contains infinitely many valuation rings.  
 By Theorem~\ref{minimal V}.2, $V$ is a prime divisor of $D$ of the second kind and 
 $V$ is a patch limit point of every infinite set of incomparable rings in 
 $\downarrow \hspace{-.03in}  V$. If $B$ contains an infinite set of incomparable 
  rings in $Q(D)$, then $V$ is a patch limit point of $B$. Since $B$ is Zariski closed 
  in $Q^*(D)$, Remark~\ref{recover} implies $B = C$
  and the claim is proved.  
 
 To complete the proof of the claim, it remains to consider the
 case where $B$ contains an infinite set $\S$ of valuation rings in $Q^*(D)$. We may assume that each valuation ring in $\S$ is a proper subring of $V$, since otherwise the claim is clear from the fact that $C= \: \downarrow^* \hspace{-.04in}  V$.  
 Since $B$ is a Zariski closed subset of $C$ containing $\S$, to show that $B = C$ it suffices to show that the Zariski closure of $\S$ is $C$. 
Let $\U$ be a nonempty open subset of $C$.  Since $C = \: \downarrow^* \hspace{-.04in}  V$, 
there exist $x_1,\ldots,x_n \in V$ such that $\U = C \cap \U(x_1,\ldots,x_n)$.   
 %
  Let ${\ff M}$ denote the maximal ideal of $V$.  Since $V$ is a prime divisor of the 
  second kind of $D$,  
  $D/\m_D =   (D + {\ff M})/{\ff M} \subsetneq V/{\ff M}$,  and $V/{\ff M}$ 
   is a finitely generated field extension of transcendence degree one of $D/\m_D$.  
   It follows that each  nonzero element in the field $V/{\ff M}$ is contained 
   in all but at most finitely many of the rank one valuation rings of  the field $V/{\ff M}$ 
   that contain $(D + {\ff M})/{\ff M}$.   
     For each valuation ring $W$ in $\S$, we have  $(D + {\ff M})/{\ff M} \subseteq W/{\ff M} \subseteq  V/{\ff M}$ and $W/{\ff M}$ is a rank one valuation ring of the field $V/{\ff M}$.  
     Thus there exists a valuation ring $W \in \S$ such that $x_1,\ldots,x_n \in W$. Therefore  $W \in \S \cap \U(x_1,\ldots,x_n)$, which proves that $\S$ is dense in $C$ in the Zariski topology. Hence $B = C$, and this completes the proof of the claim. 
     
     \smallskip

We use the claim  to prove that $C=  \: \downarrow^* \hspace{-.03in}  V$  is a Noetherian space. It suffices to show that 
 every proper Zariski closed subset $B$ of $C$ is a Noetherian space. 
 If $B$ is finite, this is clear, so suppose $B$ is infinite. 
Since $B$ is proper and $V$ is the generic point of $C$, we cannot have $V \in B$.   
As a closed subset of the spectral space $Q^*(D)$ (with respect to the Zariski topology),   $B$ has minimal elements with respect to the partial order $R \leq S$ iff $S$ is in the Zariski closure of $\{R\}$.  The elements in $B$ minimal with respect to this partial order are precisely the rings that are maximal in $B$ with respect to set inclusion. Thus every ring in $B$ is contained in a ring in $B$ that is maximal with respect to set inclusion. Since the rings in $B$ that are maximal with respect to set inclusion are incomparable, the claim implies that there are only finitely many of them, say $R_1,\ldots,R_n$.  Therefore  $B = \: \downarrow^* \hspace{-.04in} \{R_1,\ldots,R_n\}$.  
 We have already established that each Zariski closed set $\downarrow^* \hspace{-.04in} R_i$ is a Noetherian space  since as a proper subring of $V$  each $R_i$ is contained in a minimal valuation ring. Thus $B$ is a Noetherian space since it is a
finite  union of Noetherian subspaces. This proves that every proper 
 closed subset of $C$ is Noetherian. It follows that $C$ satisfies the descending chain condition on closed sets and hence $C$ is Noetherian. This proves that item 3 implies item 4.

That item 4 implies items 1 and 5 
follows from the fact that a subspace of a Noetherian space is Noetherian. If $\S$ is a subset of $Q(D)$, item 5 similarly implies item 1. 
 \end{proof}

\begin{remark} Noetherian subspaces of the Zariski-Riemann space of valuation overrings of a two-dimensional Noetherian domain are the subject of {\cite{ONoeth, OInter}}. See also~{\cite{ONoeth2, OTop2}}. 
\end{remark}

 \begin{remark} \label{commutes} If $\S$ is a nonempty Noetherian subset of $Q^*(D)$   with respect to the Zariski topology and  $R$ is a flat $D$-submodule of $F$, then $(\bigcap_{\alpha \in \S}\alpha)R = \bigcap_{\alpha \in \S}(\alpha R ).$
 This is because flat submodules of the quotient field of  a domain  commute with intersections  of rings from 
quasicompact sets of overrings \cite[Theorem~3]{FS}.   
\end{remark}






Examples presented in Theorems \ref{thm6.5} and \ref{thm6.4} in Section~\ref{sec6}
  show that even for Noetherian subsets $\U$ of $Q^*(D)$, $\O_\U$ need not be a Noetherian ring. We consider in Lemma~\ref{clopen2} and Theorem~\ref{locally closed} restrictions on $\U$ that guarantee $\O_\U$ is a Noetherian ring.

\begin{lemma} \label{clopen2} If a subset $\S$ of $Q^*(D)$ is an intersection of a downset and a Zariski quasicompact open set  in $Q^*(D)$, then $\O_\S$ is a normal Noetherian  domain. 
\end{lemma}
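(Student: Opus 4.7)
Normality of $\O_\S$ is immediate, as $\O_\S$ is an intersection of regular (hence normal) local rings. For Noetherianness, by Remark~\ref{disc2.7} it suffices to show $\O_\S$ is a Krull domain, since every Krull domain between $D$ and $F$ is Noetherian.

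My first step is to reduce to the case that $\mathcal{U}$ is a basic quasicompact open. By Corollary~\ref{Q* spectral}, $Q^*(D)$ is a spectral space, with basic quasicompact opens of the form $\U(x_1,\ldots,x_n) \cap Q^*(D)$ inherited from Proposition~\ref{L(D) spectral}. Writing $\mathcal{U} = \bigcup_{i=1}^m \mathcal{U}_i$ as a finite union of basic opens gives $\O_\S = \bigcap_i \O_{\mathcal{D} \cap \mathcal{U}_i}$, and a finite intersection of Krull overrings of $D$ with common quotient field is again Krull (finite character is preserved under finite unions of representing families of DVRs), reducing the proof to the basic case.

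In this basic case, set $E = D[x_1,\ldots,x_n]$, a Noetherian domain of dimension at most two contained in every $\alpha \in \S$. The downset hypothesis on $\mathcal{D}$ plays a structural role: for each $\alpha \in \S$, the finite chain of two-dimensional regular local rings from $D$ up to $\alpha$ supplied by Proposition~\ref{prop1.6} contains a smallest ring $\beta_\alpha$ that already contains $E$, and since $\mathcal{D}$ is a downset and $\beta_\alpha \subseteq \alpha$, we have $\beta_\alpha \in \mathcal{D}$; as $\beta_\alpha \in \mathcal{U}$ as well, this forces $\beta_\alpha \in \S$. When $E_{\p_\alpha}$ (with $\p_\alpha = \m_\alpha \cap E$) is itself a two-dimensional regular local ring, one has $\beta_\alpha = E_{\p_\alpha}$, yielding the identification $\O_\S = \bigcap_{\alpha \in \S} E_{\p_\alpha}$.

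The main obstacle is to conclude that this intersection is a Krull domain. In favorable circumstances---for instance when prime avoidance rewrites $\bigcap_\alpha E_{\p_\alpha}$ as the localization $E_W$ with $W = E \setminus \bigcup_\alpha \p_\alpha$---the conclusion is immediate, since localizations of the Noetherian ring $E$ are Noetherian. In general one must handle possible non-regularity of some $E_{\p_\alpha}$ (where $\beta_\alpha$ is then a proper local quadratic transform of $E_{\p_\alpha}$) and verify the Krull property directly, representing $\O_\S$ as an intersection of the essential DVRs of the rings $\beta_\alpha$ and using Noetherianness of $E$ to establish finite character: each nonzero $f \in \O_\S$ lies in only finitely many height-one primes of $E$, which controls the set of DVRs in the representation where $f$ is a non-unit. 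Once $\O_\S$ is shown to be Krull, Remark~\ref{disc2.7} yields Noetherianness, completing the proof.
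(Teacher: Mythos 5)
Your skeleton agrees with the paper's proof more than it differs from it: normality is immediate, the reduction to a single basic open $\U(x_1,\ldots,x_n)$ via ``finite intersections of Krull overrings are Krull'' together with Remark~\ref{disc2.7} is exactly the paper's first step, and your use of the downset hypothesis to force the minimal points of $Q(D)$ over $E=D[x_1,\ldots,x_n]$ lying below members of $\S$ to belong to $\S$ is the same as the paper's observation that the minimal rings $\T$ of $\S$ are minimal in $\U(A_1)$. (One small omission: members of $\S$ may be valuation rings in $Q^*(D)\setminus Q(D)$, to which Proposition~\ref{prop1.6} does not apply; they have to be shown redundant, which is easy since each contains a point of $Q(D)$ containing $x_1,\ldots,x_n$, and that point lies in the downset.)

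The genuine gap is at the decisive step, where you ``verify the Krull property directly.'' Your finite-character control --- that a nonzero $f$ lies in only finitely many height-one primes of $E$ --- only constrains the essential DVRs of the rings $\beta_\alpha$ whose centers have height one; it says nothing about the prime divisors of the second kind that occur among the essential valuations of the $\beta_\alpha$. By Remark~\ref{rm3.4}, each $\beta_\alpha$ of positive level has one or two such essential valuations, namely the order valuation rings of the points to which it is proximate, and every one of these dominates $D$; hence a fixed nonzero element of $\m_D$ is a nonunit in \emph{all} of them. So finite character of your representation holds only if the total number of second-kind divisors involved is finite, i.e., only if the points $\beta_\alpha$ are proximate to finitely many points of $Q(D)$ (equivalently, have bounded level and sit on one nonsingular projective model). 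That boundedness is the real content of the lemma, it is not a consequence of the Noetherianness of $E$, and your sketch never establishes it; the paper obtains it by identifying the minimal points over $D[A_1]$ with closed points of the minimal desingularization of the model defined by $x_1,\ldots,x_t$ \cite[Theorem~5.3]{HO} and then invoking \cite[Theorem~7.4]{HO}. Theorem~\ref{thm6.5} shows why this cannot be waved away: the incomparable points $\beta_a$ of Setting~\ref{alanexample} are all of level two, each is an intersection of finitely many essential DVRs, yet $\O_{\{\beta_a\}}$ fails to be Krull precisely because infinitely many distinct second-kind divisors $\ord_{\alpha_a}$ intervene. Nothing in your argument rules out the analogous behavior for the family $\{\beta_\alpha\}$, so the proof as written does not go through.
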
  

\begin{proof} Write $\S = \U \cap \V$, where $\U$ is a quasicompact open set in $Q^*(D)$ and $\V$ is a downset in $Q^*(D)$.  
By Corollaries~\ref{basis} and~\ref{Q* spectral}, the quasicompactness of $\U$  implies there exist finite subsets  $A_1,\ldots,A_n$ of $F$ such that $\S = ({\mathcal{U}}(A_1) \cup \cdots \cup {\mathcal{U}}(A_n)) \cap \V $.  Let $\T$ be the set of rings that are minimal with respect to inclusion in $\S$.  Since  $\V$ is a downset, the rings in $\T$ are minimal in the set ${\mathcal{U}}(A_1) \cup \cdots \cup {\mathcal{U}}(A_n)$. Thus $\O_\T = \O_\S$.  The 
 intersection of finitely many normal Noetherian overrings of $D$ is a Krull domain and hence by  Remark~\ref{disc2.7} is  a normal Noetherian ring. Therefore to verify the theorem it suffices to  prove that $\O_{\S}$ is a Noetherian ring in the case where $\S = \U(A_1)$.  
 
 Assuming $\S = \U(A_1)$, 
the rings in $\T$ are minimal in $Q(D)$ with respect to containing $D[A_1]$. Write $A_1 = \{x_1,\ldots,x_t\}$.  Then $D[A_1]$ is the   coordinate ring of an affine component of the projective model $X$ defined by $x_1,\ldots,x_t$.  Since the rings in $\T$ are elements of $Q(D)$ that are minimal with respect to dominating closed points in $X$, these points are also closed points in the minimal desingularization $X'$ of $X$ \cite[Theorem 5.3]{HO}. This proves that $\T$ is a set of closed points in a nonsingular projective model over $D$.  By \cite[Theorem 7.4]{HO}, $\O_\T$ is a Noetherian normal ring.  
\end{proof}

\begin{theorem} \label{locally closed}  Let $\V$ be a  Noetherian downset in $Q^*(D)$.  Then for every Zariski open subset $\U$ of $\V$, the ring $\O_\U$ is a Noetherian normal domain.

\end{theorem}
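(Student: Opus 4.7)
The plan is to reduce the statement directly to Lemma~\ref{clopen2}, which already handles intersections of a downset with a Zariski quasicompact open subset of $Q^*(D)$. The hypothesis that $\V$ is Noetherian will be used essentially once, to promote an arbitrary open subset $\U$ of $\V$ into a set of the form $\V \cap W'$ with $W'$ a quasicompact open set of the ambient space $Q^*(D)$.

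First I would observe that since $\V$ is a Noetherian subspace of $Q^*(D)$, every subspace of $\V$ is Noetherian and in particular quasicompact in the Zariski topology. So the open subset $\U$ of $\V$ is quasicompact.  By definition of the subspace topology there is a Zariski open set $W \subseteq Q^*(D)$ with $\U = \V \cap W$. Using the basis of quasicompact open sets for $Q^*(D)$ supplied by Corollary~\ref{Q* spectral} together with Corollary~\ref{basis}, I would write $W$ as a union $\bigcup_{\lambda} W_\lambda$ of basic quasicompact open subsets of $Q^*(D)$. Then $\U = \bigcup_{\lambda} (\V \cap W_\lambda)$ is an open cover of $\U$ in $\U$, so by quasicompactness of $\U$ finitely many of these cover $\U$. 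Setting $W' := W_{\lambda_1} \cup \cdots \cup W_{\lambda_n}$, which is quasicompact open in $Q^*(D)$ as a finite union of quasicompact opens, I obtain $\U = \V \cap W'$.

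At this point $\U$ is displayed as the intersection of a downset ($\V$) and a Zariski quasicompact open subset ($W'$) of $Q^*(D)$. Lemma~\ref{clopen2} then yields that $\O_\U$ is a normal Noetherian domain, finishing the proof.

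The only step that requires care is the passage from the arbitrary open $W$ in $Q^*(D)$ to a quasicompact one $W'$ with the same trace on $\V$; this is where the Noetherian hypothesis on $\V$ is crucial, and where one must keep in mind the distinction between quasicompactness of $\U$ as a subspace (which is immediate from the Noetherian hypothesis) and quasicompactness of an open set in the spectral ambient space $Q^*(D)$ (which is the hypothesis of Lemma~\ref{clopen2}). No further ideas about the structure of $\O_\U$ are needed, since Lemma~\ref{clopen2} already packages the reduction to closed points of a nonsingular projective model via the results of \cite{HO}.
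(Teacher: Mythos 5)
Your proposal is correct and follows essentially the same route as the paper's own proof: use the Noetherian hypothesis to get quasicompactness of $\U$, write the ambient open set as a union of quasicompact basic opens of the spectral space $Q^*(D)$, extract a finite subcover so that $\U = \V \cap {\mathcal{W}}$ with ${\mathcal{W}}$ quasicompact open, and then invoke Lemma~\ref{clopen2}. No gaps; nothing further is needed.
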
 

\begin{proof} Let $\U$ be a Zariski open subset of $\V$, and let $\U'$ be a Zariski open subset of $Q^*(D)$ such that $\U = \U' \cap \V$.  By Corollary~\ref{Q* spectral}, $Q^*(D)$ is a spectral space, so   $\U'$ is a union of Zariski quasicompact open subsets of $Q^*(D)$, say $\U' = \bigcup_i \U_i$, where each $\U_i$ is a Zariski quasicompact open subset of $Q^*(D)$.   Then $\{\V \cap \U_i\}$ is an 
open cover of $\U$. Since $\V$ is a Noetherian subset of $Q^*(D)$, every subset of $\V$ is quasicompact in the Zariski topology, and so $\U$ has a finite subcover in $\{\V \cap \U_i\}$. Since the union of {finitely many} quasicompact subspaces is quasicompact, we conclude that $\U = \V \cap {{\mathcal{W}}}$ {for some quasicompact open subset ${\mathcal{W}}$ of $Q^*(D)$}. By Lemma~\ref{clopen2}, $\O_\U$ is a Noetherian normal domain. 
\end{proof}

\begin{corollary} \label{P} Let $\alpha \in Q(D)$. The subset $P(\alpha)$ of $Q(D)$ of points proximate to $\alpha$ is a Noetherian subset of $Q(D)$ such that for each Zariski open subset $\U$ of $P(\alpha)$, $\O_\U$ is a Noetherian normal domain. 
\end{corollary}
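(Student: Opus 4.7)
The plan is to realize $P(\alpha)$ as a Zariski open subset of the Noetherian downset $\V := \:\downarrow^*\hspace{-.02in} P(\alpha)$ in $Q^*(D)$, and then invoke Theorem~\ref{locally closed}. Both claims of the corollary then fall out at once: $P(\alpha)$ inherits the Noetherian property as a subspace of $\V$, and every Zariski open subset $\U$ of $P(\alpha)$ is automatically Zariski open in $\V$, so Theorem~\ref{locally closed} gives that $\O_\U$ is a Noetherian normal domain.

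Let $D = R_0 \subsetneq R_1 \subsetneq \cdots \subsetneq R_h = \alpha$ be the unique chain of ancestors of $\alpha$ in $Q(D)$. By construction $\V$ is a downset in $Q^*(D)$, and I claim $\V = \{R_0,\ldots,R_{h-1}\} \cup P(\alpha)$: for each $\beta \in P(\alpha) \subseteq Q(D)$, the subrings of $\beta$ in $Q^*(D)$ are exactly the ancestors of $\beta$ in $Q(D)$, since no valuation ring in $Q^*(D)$ can be a proper subring of the non-valuation ring $\beta$ (every overring of a valuation ring is again a valuation ring or the whole quotient field); these ancestors consist of $\{R_0,\ldots,R_{h-1}\}$ together with the chain from $\alpha$ up to $\beta$, which is contained in $P(\alpha)$. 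Because $P(\alpha) \subseteq \:\downarrow\hspace{-.02in}\ord_\alpha$, the downset $\V$ sits inside $\downarrow^*\hspace{-.02in}\ord_\alpha$, which is Noetherian by Theorem~\ref{noether down} applied with the single valuation ring $\ord_\alpha$; hence $\V$ is Noetherian. Finally, $P(\alpha)$ is Zariski open in $\V$ because its complement $\{R_0,\ldots,R_{h-1}\}$ in $\V$ (empty when $h=0$) is by Lemma~\ref{irreducible} the Zariski closure of the singleton $\{R_{h-1}\}$ in $Q^*(D)$, and hence Zariski closed in $\V$.

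The main technical point, and the reason for choosing $\V := \:\downarrow^*\hspace{-.02in} P(\alpha)$ rather than the a priori more natural $\downarrow^*\hspace{-.02in}\ord_\alpha$, is that the latter also contains $\ord_\alpha$ itself and the rank-two valuation rings $V_\beta$ ($\beta \in Q_1(\alpha)$) obtained by composite construction from the points of $Q_1(\alpha)$; the presence of these valuation rings would obstruct $P(\alpha)$ from being open in the downset. Passing to the smaller downset $\downarrow^*\hspace{-.02in} P(\alpha)$ excludes these valuations automatically by the same overring argument used in the previous paragraph.
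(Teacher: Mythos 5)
Your proposal is correct, and it reaches the corollary by a slightly different packaging than the paper, although both arguments rest on the same two results, Theorem~\ref{noether down} and Theorem~\ref{locally closed}. The paper's proof is shorter: it notes $P(\alpha) \subseteq \: \downarrow \hspace{-.03in} \ord_\alpha$, so $P(\alpha)$ is Noetherian by Theorem~\ref{noether down}, and then observes that $P(\alpha)$ is itself a downset of $Q^*(\alpha)$, so the second claim follows by applying Theorem~\ref{locally closed} with $\alpha$ in place of $D$ (harmless, since $\alpha$ is again a two-dimensional regular local ring); no enlargement of $P(\alpha)$ is needed. You instead stay inside $Q^*(D)$: you saturate downward to $\V = \: \downarrow^* \hspace{-.02in} P(\alpha)$, identify $\V$ as $P(\alpha)$ together with the finite chain $\{R_0,\ldots,R_{h-1}\}$ strictly below $\alpha$ (correctly using that no valuation ring of $Q^*(D)$ can sit under a ring of $Q(D)$, and that the segment of the chain from $\alpha$ to any $\beta \in P(\alpha)$ lies in $P(\alpha)$), obtain Noetherianity of $\V$ from $\V \subseteq \: \downarrow^* \hspace{-.02in} \ord_\alpha$ and Theorem~\ref{noether down}, and exhibit $P(\alpha)$ as Zariski open in $\V$ because its complement is $\downarrow^* \hspace{-.02in} R_{h-1}$; then Theorem~\ref{locally closed} applies verbatim over $D$, and openness of $\U$ in $P(\alpha)$ passes to openness in $\V$. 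What your route buys is that the quoted theorems are used exactly as stated over $D$, at the cost of the extra verification of the structure of $\downarrow^* \hspace{-.02in} P(\alpha)$ and of the openness of $P(\alpha)$ in it; your closing remark on why $\downarrow^* \hspace{-.02in} \ord_\alpha$ would not serve as the ambient downset is accurate and a good sanity check. One small point of attribution: the fact that $\{R_0,\ldots,R_{h-1}\} = \: \downarrow^* \hspace{-.02in} R_{h-1}$ is the Zariski closure of $\{R_{h-1}\}$ is not literally the statement of Lemma~\ref{irreducible}; it follows at once from the definition of the basic open sets $\U(x_1,\ldots,x_n)$, which show that the closure of a point $R$ of $Q^*(D)$ is $\downarrow^* \hspace{-.02in} R$ (this is also implicit in the paper's proof of Theorem~\ref{noether down}). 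This is immediate, so it is a matter of citation rather than a gap.
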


\begin{proof}
If $V$ is the order valuation ring of $\alpha$, then $P(\alpha) \subseteq \: \downarrow \hspace{-.03in} V$, so $P(\alpha)$ is Noetherian by Theorem~\ref{noether down}. Since $P(\alpha)$ is a downset of $Q^*(\alpha)$, 
the second claim now follows from Theorem~\ref{locally closed} applied to  $Q^*(\alpha)$. 
\end{proof}




\begin{remark} \label{presheaf}  We may view $\O$   as a presheaf on $Q^*(D)$ with respect to either the patch or Zariski topologies. In either topology, the stalks of the presheaf $\O$ are the rings in $Q^*(D)$. 
\begin{enumerate} 
\item Working with the patch topology on $Q^*(D)$, we have by  Lemma~\ref{clopen2} that for each patch clopen set $\U$, the ring of sections $\O_\U$ of $\U$ is Noetherian.   
 
 \item 
Since $\O$ is defined on the empty set to be the field $F$,  $\O$ is not a sheaf with respect to either the patch or Zariski topologies of $Q^*(D)$ because for $\O$ to be a sheaf would require $\O$  to be the zero ring on the empty set. Even with the modification that $\O$ is defined on the empty set to be the zero ring, $\O$ is not a sheaf. This follows from the fact that $Q^*(D)$ is not an irreducible space in either the Zariski or patch topologies.

\item 
By restricting to $P(\alpha)$, where $\alpha \in Q(D)$, we obtain that the  modification to the definition of $\O$ as discussed in item 2 produces a sheaf on $P(\alpha)$ with respect to the Zariski topology, since by Lemma~\ref{irreducible}, $P(\alpha)$ is  irreducible in the Zariski topology. Therefore  $P(\alpha)$ with structure sheaf $\O$ is a locally ringed Noetherian space for which the ring of sections of each open set is a Noetherian ring. By appending the order valuation of $\alpha$ to the set $P(\alpha)$, we obtain a locally ringed spectral Noetherian space whose rings of sections are Noetherian. This locally ringed space is not a scheme since the stalks of the structure sheaf have localizations that do not appear as other stalks.


  \end{enumerate}
 \end{remark}



 

\section{Examples}  \label{sec6}

The purpose of this section is to present  examples that illustrate the  
range of behavior of intersections 
of {rings in} subsets of $Q(D)$.  We use the following terminology.

\begin{definition}  \label{def5.1}    A subset $\U$ of $Q(D)$ is said to be {\it complete} 
if $\O_\U = \bigcap_{R \in \U}R = D$.  For a point $\alpha \in Q(D)$, a subset $\U$ of $Q(D)$ of points
that dominate $\alpha$ is said to be {\it complete over} $\alpha$ if $\O_\U = \alpha$. 
\end{definition}


\begin{remark} \label{rem5.3} 
Let  $J$ be an integrally closed  ideal of $D$.  
The  set $\U$ of closed points of the projective model $X = \Proj D[Jt]$ over $D$ is complete over $D$,  since every valuation ring $V \in X(D)$ 
is centered on a closed point of $X$, and $D$ is the intersection of the valuation rings in $X(D)$.
We are identifying $\Proj D[Jt]$ with the set of local rings obtained by homogeneous localization of 
the graded domain $D[Jt]$ at its relevant homogeneous prime ideals.
Therefore  $D = \bigcap_{\alpha \in \U}\alpha$. 
It is natural to ask if $D$ is the intersection of the rings in  a proper subset of $\U$. 
 This is equivalent to asking if the representation $D = \bigcap_{\alpha \in \U}\alpha$
is irredundant.  {If $D$ is Henselian and $X$ is nonsingular, then this representation is irredundant; see Remark~\ref{rm3.3}.2.}
\end{remark}



\begin{example}  \label{ex1.7}
Assume Notation~\ref{note3.1}.  
There exist 2-dimensional normal Noetherian local overrings $R$ and $S$ of $D$
such that $D = R ~ \cap ~ S$ and  $R$ and $S$    both  properly contain $D$.  
Let  $R = D[y^2/x]_{(x, y, y^2/x)D[y^2/x]}$  and  $S = D[x^2/y]_{(x, y, x^2/y)D[x^2/y]}$. 
Then $R$ and $S$ both properly contain $D$. Since $D$ is the intersection of valuation domains $V$
centered on $\m_D$,  to prove that $D = R~ \cap ~S$, it suffices to prove that each such valuation domain 
$V$ has the property that either $R \subseteq V$ or $S \subseteq V$.  Since $V$ has center $\m_D$ 
on $D$, both $xV$ and $yV$ are contained in $\m_V$.  If $y^2V \subsetneq xV$,  then $R \subseteq V$.
On the other hand, if $xV \subseteq y^2V$,  then $x^2V \subsetneq xV \subseteq y^2V \subsetneq yV$
implies that $x^2V \subsetneq yV$ and $S \subseteq V$.   Therefore $D = R~ \cap ~ S$.
\end{example}

 \begin{discussion} \label{dis4.3}
 Assume Notation~\ref{note3.1}  and let $\gamma \in Q_1(D)$.  
 It  observed in  \cite[Theorem~8.3]{HO} that  the set  
 $Q_1(D)  \setminus \{\gamma\}$ is not
  complete.  Let $Q_1(\gamma)$ denote the points in $Q_2(D)$ that dominate 
  $\gamma$.\footnote{This is 
  precisely  the infinitely near  points in the first neighborhood of $\gamma$.}
  Then $\gamma$ is the intersection of the points in $Q_1(\gamma)$.
  Hence $\U = Q_1(\gamma ) \cup (Q_1(D) \setminus \{\gamma\})$ is a 
  complete subset of $Q(D)$. 
  We are interested in describing subsets $\Lambda$ of $Q_1(\gamma)$ such that 
  $\Lambda \cup (Q_1(D) \setminus \{\gamma\})$ is complete.

   \end{discussion}

 \begin{example} \label{ex4.6}
   Assume Notation~\ref{note3.1} and  assume that $D$ has an algebraically closed 
   coefficient field $k$.  Let 
   $$
   \beta~ = ~    D[\frac{x}{y}]_{(y, \frac{x}{y})D\frac{[x}{y}]}   ~ \in   ~Q_1(D), 
   $$
      and consider the nonsingular projective 
   model  $X$ over $\Spec D$ gotten by blowing up $\m_D$ and then blowing up $\beta$. 
   The Zariski theory   \cite[Appendix 5]{ZS2} or \cite{Hun}  implies that 
    $X = \Proj D[Jt]$,  where $J = (x, y)\dot (x, y^2) = (x^2, xy, y^3)D$.  
   The model $X$ is obtained from  $\Proj D[xt, yt]$ by removing the point $\beta$ and adding the 
   projective line   $Q_1(\beta)$.  The set of closed points of $\Proj D[Jt]$ is 
   $$  \U ~  :=  ~  (Q_1(D) \setminus \{\beta\}) \cup Q_1(\beta).$$
   As in Discussion~\ref{dis4.3},
the set $\U$ is complete and so
    $\O_\U = D$.   To show this representation is irredundant,  we use:

 \begin{fact}  \label{fact6.6}
    To show  an element $\delta  \in \U$ is irredundant in 
    the representation $D = \bigcap_{R \in \U}R$,  it suffices to show 
     that there exists an essential valuation ring $V$ of $D$ such that $\delta \subset V$ and no other element of $\U$ 
     is contained in $V$ \cite[Lemma 8.5]{HO}.
 \end{fact}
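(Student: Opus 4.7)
The plan is to exhibit an explicit rational function $f \in F$ lying in every $R \in \U$ other than $\delta$ while avoiding $D$; once found, this gives $D \subsetneq \bigcap_{R \in \U, R \ne \delta} R$, which is precisely the statement that $\delta$ is irredundant in the representation $D = \bigcap_{R \in \U} R$.

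To construct $f$, I would unpack the hypothesis on $V$: since $V$ is an essential valuation ring of $D$, $V = D_\p$ for some height-one prime $\p$ of $D$, and since the $2$-dimensional regular local ring $D$ is a UFD we have $\p = \pi D$ for some prime element $\pi$; extend $\pi$ to a regular system of parameters $(\pi, y')$ of $\m_D$, and take $f := y'/\pi$. A one-line valuation calculation gives $v_V(f) = v_V(y') - v_V(\pi) = 0 - 1 = -1$, so $f \notin V$, and since $D \subseteq V$ we conclude $f \notin D$.

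The heart of the proof is checking that $f \in R$ for every $R \in \U \setminus \{\delta\}$. By hypothesis such an $R$ satisfies $R \not\subseteq V$, and by Remark~\ref{rm3.4}.6 this forces the Lipman transform of $\p$ in $R$ to be trivial; unwinding the definition produces a factorization $\pi = s u$ in $R$, with $u \in R^\times$ and $s$ a product of exceptional divisor generators along the unique quadratic sequence $D = R_0 \subset R_1 \subset \cdots \subset R_h = R$ supplied by Remark~\ref{note2.1}.8. Tracking $y'$ through the same sequence, using the principality of $\m_D R_i = s_i R_i$ at each stage together with the fact that $(\pi, y')$ generates $\m_D$, produces a parallel factorization $y' = s y''$ with $y'' \in R$, whence $f = y'/\pi = y''/u \in R$. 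The main obstacle is this inductive bookkeeping: at each level the transforms of $\pi$ and of $y'$ together generate the unit ideal in $R_{i+1}$, so at least one of them becomes a unit, and one must rule out the unfavorable case in which $y'$ exits to a unit while $\pi$ remains in the maximal ideal — a case excluded exactly by the hypothesis $R \not\subseteq V$. A conceptually cleaner alternative, available when $\U$ sits inside the closed points of a projective model $X$ over $\Spec D$, is to take $f$ to be any rational function whose pole divisor on $X$ is supported on the strict transform of $V(\p)$, a curve which by hypothesis meets $\U$ only at $\delta$; then $f$ is regular at every other closed point of $X$ in $\U$, and has a genuine pole, so $f \notin D$.
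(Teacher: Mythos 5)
Your main construction has a genuine gap, in fact two. First, you cannot in general ``extend $\pi$ to a regular system of parameters'': that requires $\ord_D(\pi)=1$, and the Fact is applied in Example~\ref{ex4.6} precisely to $V=D_\p$ with $\p=(x^2-y^3)D$, whose generator has order $2$ and lies in $\m_D^2$; so the construction cannot even begin in the case it is needed. Second, and more seriously, even when $\pi$ is a parameter the key claim ``$R\not\subseteq V$ implies $y'/\pi\in R$'' is false, and the failure is not the single ``unfavorable case'' you describe. Take $\pi=x$, $y'=y$, $V=D_{xD}$, let $\beta=D[x/y]_{(y,\,x/y)}\in Q_1(D)$, put $x_1=x/y$, and let $\gamma=\beta[x_1/y]_{(y,\;x_1/y-1)}\in Q_2(D)$. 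In $\gamma$ the element $x_1/y$ is a unit, so $x\gamma=y^2\gamma$; hence $y^2/x$ is a unit of $\gamma$ (so $\gamma\not\subseteq V$, the transform of $xD$ in $\gamma$ being trivial), while $y/x\notin\gamma$. Here both $\pi$ and $y'$ have trivial transform in $\gamma$, but their exceptional multiplicities differ ($x=y^2\cdot\text{unit}$ versus $y$), so the ``parallel factorization $y'=sy''$'' you hope for does not exist. Nothing in the hypothesis of the Fact excludes such a $\gamma$ from $\U\setminus\{\delta\}$ (it only forbids members of $\U$ other than $\delta$ from lying inside $V$), so your chosen $f$ need not lie in all the other rings of $\U$, and the argument collapses.

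The intended argument (the paper cites \cite[Lemma 8.5]{HO}, and the same mechanism appears in Remark~\ref{commutes}) avoids producing an explicit element. Since $V=D_\p$ is a localization of $D$, it is $D$-flat; for any $R\in Q(D)$ with $R\not\subseteq V$, the compositum $RV$ is an overring of the rank-one valuation ring $V$ properly containing it, hence $RV=F$. Because $\U\setminus\{\delta\}$ lies in a Noetherian (quasicompact) subspace of $Q^*(D)$ --- in Example~\ref{ex4.6} it is a set of closed points of a projective model --- intersection commutes with the flat extension, so $\bigl(\bigcap_{R\in\U,\,R\ne\delta}R\bigr)V=\bigcap_{R\ne\delta}(RV)=F\ne V=DV$, whence $\bigcap_{R\ne\delta}R\ne D$ and $\delta$ is irredundant. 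Your ``cleaner alternative'' (a function with pole divisor supported on the strict transform of $V(\p)$ in a nonsingular model containing $\U$) is much closer to a correct element-wise proof, since a closed point of the model is contained in $V$ exactly when it lies on that strict transform; but the existence of such a function is precisely what must be proved (for instance $f=g/\pi^m$ with $g$ chosen in a suitable complete $\m_D$-primary valuation ideal with $\pi\nmid g$), and as written it is only asserted.
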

    
  The points in $Q_1(D)  \setminus \{\beta\}$ are the localizations of $D[\frac{y}{x}]$ at maximal ideals containing
  $xD[\frac{y}{x}]$.  Since $k$ is an  algebraically closed  coefficient field for $D$,  these 
  maximal ideals have the form
  $$
  \m_b     ~=~ 
  (x, ~ \frac{y}{x} - b)D[\frac{y}{x}],  \quad \text{ where} \quad b \in k.
  $$
  For each $b \in k$,  let $\p_b = (y - bx)D$,  let $V_b = D_{\p_b}$,  and let 
  $R_b = D[\frac{y}{x}]_{\m_b}$.  
  Then the set  $\{R_b\}_{b \in k}   = Q_1(D)  \setminus \{\beta\}$ and for each
   $b \in k$,  we have $R_b \subset V_b$,  and $R_b$ is the unique point in $Q_1(D)$ that 
  is contained in $V_b$.    Since $\beta$ is not contained in $V_b$,  no point of $Q_1(\beta)$ is
  contained in $V_b$.  Hence $R_b$ is the unique point of $\U$  that is contained in $V_b$.
  
  Let $x_1 = \frac{x}{y}$.   Then $(x_1, y)\beta$ is the maximal ideal of $\beta$.
  The points in $Q_1(\beta)$ are the localizations of $\beta[\frac{x_1}{y}]$ at  maximal ideals
  containing $y\beta[\frac{x_1}{y}]$,   and the localization of $\beta[\frac{y}{x_1}]$ at the  maximal 
  ideal  $(x_1,   \frac{y}{x_1})\beta[\frac{y}{x_1}]$.  Let $\gamma$ denote the localization 
   of $\beta[\frac{y}{x_1}]$ at the  maximal ideal    $(x_1,   \frac{y}{x_1})\beta[\frac{y}{x_1}]$. 

  The maximal ideals of $\beta[\frac{x_1}{y}]$  containing $y$  have the form 
  $$
  \n_b ~ = ~  (y, ~   \frac{x_1}{y} - b)\beta[\frac{x_1}{y}], \quad \text{ where } \quad b \in k.
  $$
  For each $b \in k$,  let $\q_b = (x - by^2)D$, let  $W_b = D_{\q_b}$, and 
   let $S_b =  \beta[\frac{x_1}{y}]_{\n_b}$.
  Then the set $\{S_b\}_{b \in k} =  Q_1(\beta) \setminus \{\gamma\}$, 
  and for each $b \in k$,  we have $S_b \subset W_b$.    
  Since $D/\q_b$ is a DVR,  $S_b$ is the unique point of $\U$ contained in $W_b$.  
  
  To show the representation $D = \O_U$ is irredundant,  by Fact \ref{fact6.6}, it remains to show there exists an
  essential valuation ring $V$ of $D$ such that $\gamma \subset V$ and $\gamma$ is the only point of $\U$
  contained in $V$. Let $\p = (x^2 - y^3)D$ and let $V = D_\p$.  Since the integral closure of $D/\p$ is local,
  $V$ contains at most one point of $\U$.  
  
  Since $x = yx_1$,   the transform of $\p$ in $D[x_1]$ is $ (x_1^2 - y)D[x_1]$. 
  Let $y_1 = \frac{y}{x_1}$,  then $y = x_1y_1$ and the transform of  $ (x_1^2 - y)D[x_1]$ in $\beta[y_1]$ 
  is $(x_1 - y_1)\beta[y_1]$.  Since $(x_1, y_1)\gamma = \m_{\gamma}$,  we have
  $\gamma \subset V$.  Therefore the representation $D = \O_\U$ is irredundant.

    
    
  \end{example}
  
\begin{remark}
With the notation of Example~\ref{ex4.6}, and so $y_1 = \frac{y^2}{x}$,  
 it is shown in \cite[Example 7.6]{HO} that $ R =   D[y_1]_{(x, y, y_1)D[y_1]}$ is a 2-dimensional normal Noetherian local 
 domain, and $\Proj D[Jt]$  is the minimal desingularization of $R$.
\end{remark}

Theorem~\ref{th1.8}  establishes  that every minimal valuation overring of $D$ is a 
localization of a  ring in $\R(D)$.
Theorem~\ref{thm6.4} and Theorem~\ref{th1.8}  both describe  
subsets $\U$ of $Q(D)$ for which the rings in $\U$ are incomparable  and 
 $\O_\U$ is 
not an almost Krull domain.

 \begin{theorem}   \label{th1.8}
 Let $V$ be a minimal valuation overring of $D$.   There exists a subset $\U$  of $Q(D)$ such  
 that that  $V$ is a localization of $C = \O_\U$.  In particular,  if $V$ is
 chosen not to be a DVR,  then $C$ is not an  almost Krull domain.  
 \end{theorem}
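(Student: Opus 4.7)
By Proposition~\ref{prop1.6}.4, $V$ is the union of its quadratic sequence $D = \alpha_0 \subset \alpha_1 \subset \alpha_2 \subset \cdots$ along $V$, i.e., $V = \bigcup_{i \geq 0}\alpha_i$. I plan to construct $\U \subseteq Q(D)$ as an infinite incomparable subset whose intersection $C = \O_\U$ lies in $V$ and has $V$ as its localization at $P := \m_V \cap C$. Specifically, for each $i \geq 1$ I select $\gamma_i \in Q_1(\alpha_{i-1}) \setminus \{\alpha_i\}$, a quadratic transform of $\alpha_{i-1}$ in a direction other than the one leading to $V$, and set $\U = \{\gamma_i : i \geq 1\}$. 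By the tree structure of $Q(D)$ (Proposition~\ref{prop1.6}), for $i < j$ the chain from $D$ to $\gamma_i$ branches from the $V$-chain at $\alpha_{i-1}$, while the chain to $\gamma_j$ proceeds through $\alpha_i \ne \gamma_i$ before branching at $\alpha_{j-1}$; hence $\gamma_i$ and $\gamma_j$ are incomparable.

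The verification splits into two claims. First, $C \subseteq V$: given $f \in F \setminus V$, so that the valuation $v$ of $V$ satisfies $v(f) < 0$, one exploits the structure of the quadratic transforms $\gamma_i$ in their non-$V$ directions to show $f \notin \gamma_i$ for some $i$. Second, and more substantively, $V = C_P$: for each $v \in V$, use that $v \in \alpha_n$ for some $n$ to construct $s \in C \cap V^\times$ with $vs \in C$, giving $v = vs/s \in C_P$. The construction of $s$ exploits the containments $\gamma_i \supseteq \alpha_{i-1} \supseteq \alpha_n$ for $i > n$, so elements of $\alpha_n$ automatically lie in $\bigcap_{i > n}\gamma_i$; the finitely many $\gamma_i$ with $i \leq n$ are handled by choosing $s$ to absorb the obstructions while remaining a unit of $V$. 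The \emph{in particular} assertion then follows because, by Remark~\ref{disc2.7}, a localization of an almost Krull domain at a prime is a Krull domain, and a valuation Krull domain is necessarily a DVR; so if $V$ is not a DVR, $C$ cannot be almost Krull.

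The main obstacle is the second claim: producing for each $v \in V$ a denominator $s \in C \cap V^\times$ with $vs \in C$. This requires detailed analysis of the intersection $C$, specifically ensuring that $C$ contains elements sufficient to express each $v \in V$ as such a quotient without $C$ reducing to $D$. It may be necessary to refine the construction by taking $\gamma_i$ at deeper levels in $Q(\alpha_{i-1})$ rather than just $Q_1(\alpha_{i-1})$, or by augmenting $\U$ with auxiliary rings, to realize $C$ as a sufficiently rich subring of $V$.
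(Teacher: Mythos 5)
Your construction of $\U$ is exactly the one the paper uses (your $\gamma_{i+1}$ is the paper's $\beta_i$: a first-neighborhood point of $\alpha_i$ other than $\alpha_{i+1}$), and your deduction of the final assertion is sound: if $C$ were almost Krull then $C_P=V$ would be a Krull valuation domain, hence a DVR. But the heart of the theorem --- that $V$ actually is a localization of $C=\O_\U$ --- is not proved in your write-up. You reduce it to producing, for each $v\in V$, an element $s\in C\setminus P$ with $vs\in C$, and you explicitly leave this ``main obstacle'' unresolved, even suggesting the construction may have to be changed. That is the genuine gap: nothing in your argument controls $C$ well enough to manufacture such denominators, and indeed you have not even established $C\subseteq V$ (your first claim is likewise only a sketch; for it one needs, e.g., that for $f\notin V$ the transform of the ideal of numerator and denominator along the quadratic sequence eventually becomes principal, so that $f$ acquires a pole at some $\alpha_n$ and hence fails to lie in every later branch point).

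The paper closes this gap not by an element-by-element denominator argument but by a structural one, and no refinement of $\U$ is needed. Set $C_n=\bigl(\bigcap_{i=1}^{n}\beta_i\bigr)\cap\alpha_{n+1}$. Since $\alpha_{n+1}\subseteq\beta_j$ for all $j>n$, each $C_n\subseteq C$, and (by the principalization remark above) every element of $C$ lies in some $\alpha_{n+1}$, so $C=\bigcup_n C_n$ is a directed union. By Corollary~6.5 of \cite{HO}, $C_n$ is a Noetherian regular domain with exactly $n+1$ maximal ideals whose localizations are $\beta_1,\ldots,\beta_n,\alpha_{n+1}$; in particular, with $\p_n=\m_V\cap C_n$ one has $(C_n)_{\p_n}=\alpha_{n+1}$. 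Since localization commutes with the directed union, $C_\p=\bigcup_n(C_n)_{\p_n}=\bigcup_n\alpha_{n+1}=V$, where $\p=\m_V\cap C$. This is the step your proposal is missing: without an input like the semilocal structure theorem for finite irredundant intersections in $Q(D)$, the bare definition of $C$ as an infinite intersection gives you no handle on $C_P$, which is exactly where your attempt stalls.
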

 
 \begin{proof}  
Let 
$D = \alpha_0 \subset \alpha_1 \subset \alpha_2 \subset   \cdots $  be the quadratic sequence 
determined by $V$ {as in Remark~\ref{note2.1}.8}.  
For each  integer $i \ge 1$  let $ \beta_i$   be a first level quadratic extension of $\alpha_i$, different from $\alpha_{i+1}$,  and let 
$C = \bigcap_{i=1}^\infty \beta_i$. Thus $C = \O_\U$,  where $\U = \{\beta_i\}_{i=1}^\infty$. 
  We observe that $C$ is the following directed union.   
Since $\alpha_i \subset \beta_i$ for each $i$,  we have
$$
\alpha_1 ~  \subseteq (\beta_1 \cap \alpha_2)   ~ \subseteq (\beta_1 \cap \beta_2 \cap \alpha_3) ~ \subseteq ~ \cdots ~ \subseteq C.
$$

For each integer $n \ge 1$,  let  $ C_n   := (\bigcap_{i=1}^n\beta_i) \cap \alpha_{n+1}$ .    Since 
$\alpha_{n+1}   \subseteq \bigcap _{ j > n} \beta_j$,   we have  
$\bigcup_{n = 1}^\infty C_n = C$.    Corollary~6.5 of \cite{HO}  implies that   $C_n$
is a Noetherian regular domain  with precisely $n+1$ maximal ideals all of height 2,  
and the localizations of 
$C_n$ at its maximal ideals are $\beta_1, \ldots, \beta_n$ and $\alpha_{n+1}$.

Let $\m_V$ denote the maximal ideal of $V$.  Then $\m_V \cap \alpha_i = \m_i$  is  
the maximal ideal of $\alpha_i$,
and $\m_V = \bigcup_{i=1}^\infty \m_i$.  

Let $\p_n = \m_V \cap C_n$.   Then $(C_n)_{\p_n} = \alpha_{n+1}$.  Since this holds for all $n$,
we have $V = \bigcup \alpha_n = \bigcup (C_n)_{\p_n}$.  Let $\p   = \m_V \cap C$ denote the center of 
$V$ on $C$.  It follows that $C_\p = V$.   If $V$ is not  a DVR,  then  $C$ is not an almost Krull domain.  
\end{proof}

We observe in Remark~\ref{rmk6.14} that the set $ \U$ in 
Theorem~\ref{th1.8} is not Noetherian.

\begin{remark} \label{rmk6.14}  
Let   $\U = \{\beta_i\}_{i=1}^\infty$,       and   the valuation ring $V = \bigcup_{n \ge 0}\alpha_n$  
be   as in Theorem~\ref{th1.8}.  
Each $\beta_n$ is a point distinct from $\alpha_{n + 1}$ 
in the first neighborhood $Q_1(\alpha_n)$ of $\alpha_n$.      Since $V$ dominates 
$\alpha_{n+1}$,  $V$ does not dominate $\beta_n$.

 Each $\beta_i$ is a maximal 
element of $\U$ and   a maximal element of the down set $\downarrow \hspace{-.03in} \U$. By
construction,  $\alpha_n \in   ~\downarrow \hspace{-.03in} \U$ for all $n $. 
Theorem~\ref{minimal V}.1 implies that $V$ is a patch limit point of $\U$.

The rings $\beta_n$ are incomparable,  that is $\beta_i \subseteq \beta_j$ implies $i = j$. 
 If $W$ is a 
minimal valuation overring of $D$,  then at most one of the $\beta_n$ is dominated by $W$. 

Notice that $\downarrow \hspace{-.03in} \U   = \U \cup \{\alpha_n\}_{n = 0}^\infty$.
Let $W \in X(D)$ with $W \ne V$.  Then only finitely many of the $\alpha_n$ are contained in $W$. 
Since the $\beta_i$ are incomparable, no more than one of the $\beta_i$ is dominated by
 $W$.  Since $W \in  X(D)$,   for each integer $n \ge 1$ there is a unique ring $\gamma_n \in Q_n(D)$
 that is contained in $W$ and $W$  then  dominates $\gamma_n$.  
 Therefore no more than one of the $\beta_i$ is contained in $W$.  
 By Theorem~\ref{minimal V}.1,  $W$ is not a patch limit point of $\U$.  
 
 Let $W$ be a prime divisor of the second kind on $D$, and let $\gamma \in Q(D)$ be such
 that $W = \ord_{\gamma}$.  We consider two cases:
 \begin{enumerate}
 \item 
 Assume  $\gamma$ is contained in $V$.  Then $\gamma$ is dominated by $V$ 
 and  $\gamma = \alpha_n$ for some $n$. Since $Q_1(\alpha_n) \cap  \downarrow \hspace{-.03in} \U $
 is finite,  Theorem~\ref{minimal V}.2 implies that $W$ is not a patch limit point of $\U$.  
 
 \item
 Assume $\gamma$ is not contained in $V$.  Then $\alpha_n$ does not dominate $\gamma$, 
 and $\beta_n \in Q_1(\alpha_n)$ implies $\beta_n$ does not dominate $\gamma$.
 Hence $\downarrow \hspace{-.03in} \U \cap Q_1(\gamma)$ is empty  and by
 Theorem~\ref{minimal V}.2,  $W$ is not a patch limit point of $\U$. 
 \end{enumerate}
 
 We conclude that $V$ is the unique patch limit point of $\U$.  Since the $\beta_n$ are 
 maximal elements of $\U$ and are not in $V$,  the set $\U$ has infinitely many  irreducible 
 components and is not Noetherian.


\end{remark}

\begin{remark} \label{rmk6.15}  {Corollary~\ref{cor4.10} implies that every  infinite subset of $Q(D)$ has at least one 
patch limit point in $Q^*(D)$.    Remark~\ref{rmk6.14} illustrates that a subset $\U$ of
$Q(D)$ with one patch limit point may also have infinitely many finite irreducible components.}
\end{remark}

Example~\ref{disc6.3} is a  $2$-dimensional normal local overring $S$  of $D$  that is 
dominated by a local ring in $Q_1(D)$, but $S$ is  not in $\R(D)$.

  \begin{example} \label{disc6.3}
  Assume Notation~\ref{note3.1}. With $k$ an algebraically closed field of characteristic zero and $D$ the 
  localized polynomial ring $k[x, y]_{(x,y)}$,  Shannon in \cite[Example 3.9, p.306]{S}  
  describes an infinite strictly ascending chain 
  $$
  D ~ \subset  ~R_2 ~ \subset  ~R_3 ~ \subset \cdots \subset  ~ R_n ~ \subset \cdots 
  $$ 
  of 2-dimensional normal Noetherian  local   domains,  where 
  $$R_2 = D[y^2/x]_{(x, y, y^2/x)D[y^2/x]},$$
  $R_3$ is the localization of $D[y^2/x, y^3/x^2]$ at the maximal ideal 
  $$
   (x, y, y^2/x, y^3/x^2)D[y^2/x, y^3/x^2],
   $$
  and $R_n$ is the localization of $D[y^2/x, y^3/x^2, \ldots, y^n/x^{n-1}]$ at the maximal ideal generated by 
   $(x, y, y^2/x, \ldots ,  y^n/x^{n-1})$, for $n \ge 3$.  
    Each of the rings $R_n$ is dominated by   $\alpha = D[y/x]_{(x, y/x)D[y/x]} \in Q_1(D)$.   

Since each of the rings $R_n$ is a 2-dimensional normal Noetherian local domain, 
each $R_n \in \R(D)$  by \cite[Theorem 7.4]{HO}.
  Let  $S = \bigcup_{n =2}^\infty R_n$.
     Then $S$ is a 2-dimensional normal local domain
    dominated by $\alpha$.  Since $y/x \not\in S$,  we have $S \subsetneq \alpha$. 
    Since $ y(y/x)^n \in S$ for all $n \ge 1$,  the element $y/x$ is almost integral over $S$, and $S$ is not completely integrally closed.
     However, a ring in $\R(D)$  is an intersection of completely integrally closed domains and is therefore  
     completely integrally closed. This 
 implies that $S$ is not in $\R(D)$.  
    
  \end{example}

We use Setting~\ref{alanexample} in Theorems~\ref{thm6.5} and \ref{thm6.4}.

\begin{setting} \label{alanexample}
Assume Notation~\ref{note3.1}, and assume that $D$ has an algebraically
closed coefficient field $k$.  Let $\ord_D$ denote the order valuation ring of $D$,  and define
$$
 A: ~ = ~  D[\{\frac{y^2}{x + ay} ~|~ a \in k\} \cup \{\frac{x^2}{y}\}].
$$
Then $A \subset \ord_D$ and $\ord_D$ is centered on the  maximal ideal $\m$  of $A$, where 
$$
\m: ~=   (x, y, \{\frac{y^2}{x + ay} ~|~ a \in k\} \cup \{\frac{x^2}{y}\})A.
$$
Define  $C := A_\m$.  We prove in  Proposition~\ref{prop6.4}   that $C$ is an 
infinite directed union of $2$-dimensional normal Noetherian local domains. Therefore $C$ 
is integrally closed. 

For each $a \in k$, define
$$
\alpha_a :~=  ~ D[\frac{x + ay}{y}]_{(y, \frac{x+ ay}{y})}  \in Q_1(D) \quad 
\text{ and} \quad  \alpha':~ =~ D[\frac{y}{x}]_{(x, \frac{y}{x})} \in Q_1(D).
$$

For each $a \in k$, define
$$
\beta_a:~ =~ \alpha_a[\frac{y^2}{x + ay}]_{(\frac{x + ay}{y}, \frac{y^2}{x + ay})} \in Q_2(D) \quad  \text{  and   } \quad
\beta':~ =~ \alpha'[\frac{x^2}{y}]_{(\frac{y}{x}, \frac{x^2}{y})}  \in Q_2(D). 
$$
Let $B ~ = ~  \bigcap_{a \in k} \beta_a $, and let $C' = B \cap \beta'$
\end{setting}

\begin{proposition} \label{prop6.4}
The local ring $C$  defined in Setting~\ref{alanexample} is an infinite directed union of normal Noetherian local 
domains.
\end{proposition}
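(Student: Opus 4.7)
The plan is to display $C$ as a directed union of $2$-dimensional normal Noetherian local domains indexed by the finite subsets $T$ of $k$. For each such $T$, set
\[
A_T := D[\{y^2/(x+ay) : a \in T\} \cup \{x^2/y\}], \qquad \mathfrak{n}_T := \m \cap A_T, \qquad C_T := (A_T)_{\mathfrak{n}_T}.
\]
First I would verify that $\mathfrak{n}_T$ is a maximal ideal of $A_T$ with residue field $k$: the composition $A_T/\mathfrak{n}_T \hookrightarrow A/\m = k$ contains the image of $D/\m_D = k$, forcing $A_T/\mathfrak{n}_T = k$. Since $A = \bigcup_T A_T$ is a directed union and localization commutes with directed unions, this yields $C = \bigcup_T C_T$ as a directed union.

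Each $A_T$ is a finitely generated $D$-algebra, hence Noetherian, so $C_T$ is a Noetherian local domain. Since $A_T$ is torsion-free over the Noetherian domain $D$ and therefore $D$-flat, the local inclusion $D \hookrightarrow C_T$ is faithfully flat, giving $\dim C_T \geq \dim D = 2$; the reverse inequality $\dim C_T \leq \dim A_T \leq \text{tr.deg}_k F = 2$ yields $\dim C_T = 2$.

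The main obstacle is proving that each $C_T$ is normal. My plan is to identify $\Spec A_T$ with an affine open subset $U_T$ of a nonsingular projective model $X_T$ over $\Spec D$. Let $X_T$ be the surface obtained from $\Spec D$ by first blowing up $\m_D$ and then blowing up the closed points $\{\alpha_a : a \in T\} \cup \{\alpha'\}$ on the resulting exceptional divisor; since iterated blowups of closed points on a nonsingular surface are nonsingular, $X_T$ is nonsingular. The generators $y^2/(x+ay)$ for $a \in T$ and $x^2/y$ are rational functions on $X_T$ whose pole loci are the strict transforms of the divisors $V(x+ay)$ for $a \in T$ and of $V(y)$; let $U_T$ be the open complement of these strict transforms. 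The inclusion $A_T \subseteq \Gamma(U_T, \mathcal{O}_{X_T})$ is then clear; the principal technical challenge is the reverse inclusion, which requires a careful divisor-theoretic analysis in the spirit of Discussion~\ref{disc4.2} and Remark~\ref{rm3.4} to show that every rational function regular on $U_T$ is expressible as a $D$-polynomial in the designated generators. Granting this identification, normality of $A_T$ follows from the nonsingularity of $X_T$, and hence $C_T$ is a normal Noetherian local domain of dimension $2$. An alternative route would be to verify Serre's criterion $R_1 + S_2$ for $A_T$ directly via explicit defining relations and the Jacobian criterion, but this would demand a similarly detailed understanding of the ideal of relations.
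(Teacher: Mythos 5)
Your decomposition of $C$ as the directed union of the rings $C_T=(A_T)_{\mathfrak n_T}$ over finite subsets $T\subseteq k$ is exactly the paper's, and the Noetherian and local claims are fine. The problem is the normality step, which you leave unproved (``granting this identification'') and which, as formulated, cannot be repaired: $\Spec A_T$ is \emph{not} isomorphic to an open subscheme of the nonsingular surface $X_T$ obtained by blowing up $\m_D$ and then the points $\alpha_a$ $(a\in T)$ and $\alpha'$. If it were, $C_T$ would be a $2$-dimensional \emph{regular} local ring, but it is not: Remark~\ref{rmk6.5} records that the minimal desingularization of $\Spec C_T$ is the nontrivial blowup $\Proj C_T[\m_{C_T}t]$. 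Concretely, on $X_T$ the center of $\ord_D$ is the generic point of the strict transform of the first exceptional divisor, whose local ring is the DVR $\ord_D$, whereas $\mathfrak n_T$ is a closed point of $\Spec A_T$ with $2$-dimensional local ring; the birational morphism $X_T\to\Spec A_T$ contracts that entire curve to the point $\mathfrak n_T$, so no open set $U_T$ of $X_T$ maps isomorphically onto a neighborhood of it. A secondary flaw: torsion-free does not imply flat over the $2$-dimensional ring $D$ (e.g.\ $\m_D$, or $D[y/x]$, is torsion-free and not flat), so your faithful-flatness argument for $\dim C_T\ge 2$ fails, although the conclusion $\dim C_T=2$ is correct and easy to obtain otherwise (the dimension inequality gives $\le 2$, and the chain through the center of $\ord_{\alpha_a}$ gives $\ge 2$).

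The paper's argument replaces the nonsingular model by a \emph{normal} one in a way that makes normality automatic: let $J$ be the product of the $|T|+1$ simple complete ideals $(y,x^2)D$ and $(x+ay,y^2)D$ for $a\in T$. By Zariski's product theorem, $J$ is a complete ideal of the $2$-dimensional regular local ring $D$, hence all powers of $J$ are complete, $D[Jt]$ is integrally closed, and $\Proj D[Jt]$ is a normal projective model. One checks that $A_T$ is the affine component $D[J/g]$ for $g=y\prod_{a\in T}(x+ay)$, and that $C_T$ is the local ring of $\Proj D[Jt]$ at the (closed) point dominated by $\ord_D$; normality of $C_T$ is then immediate. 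Note that $\m_D$ is deliberately \emph{not} a factor of $J$, which is why $\ord_D$ is centered at a closed point of this model rather than along a curve. If you want to salvage your approach, you would have to work on this normal (singular) model, or else verify Serre's criterion for $A_T$ directly; the nonsingular $X_T$ only enters as the desingularization of $\Spec C_T$, not as a model containing $\Spec A_T$ as an open set.
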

\begin{proof}
Let $n$ be a positive integer and let $\mathcal S =  \{a_1, \ldots, a_n\}$ be  a set of 
$n$ distinct elements of $k$.  
Define the ring $R_\S$ to be the localization of $D[\frac{x^2}{y},  \{\frac{y^2}{x + a_iy} \}_{i=1}^n]$
at the maximal ideal generated by $(x, y, \frac{x^2}{y},  \{\frac{y^2}{x + a_iy} \}_{i=1}^n)$.
Each of the ideals $(y, x^2)D$,  $(x+a_1y, y^2)D, \ldots , (x+a_ny, y^2)D$  is a simple complete ideal.  
Let $J$ denote  the product of these $n+1$ complete ideals.  Since $D$ is a 2-dimensional regular local ring, $J$ is a
complete ideal,  and  $R_\S$ is the local ring on $\Proj D[Jt]$ dominated by $\ord_D$.  Therefore $R_\S$ is a 
normal Noetherian local domain that is dominated by $C$. 

The rings $R_\S$ obtained as we vary over finite sets of distinct elements of $k$ form a directed family of normal 
Noetherian local domains dominated by $C$, and $C$ is the directed union of this family of rings. 
Remark~\ref{rmk6.5} implies that for $ \S_1 \subsetneq \S_2$,  the associated ring $R_{\S_1} \subsetneq 
R_{S_2}$.     \end{proof}

\begin{remark} \label{rmk6.5}
Let $R :=R_S$ be as defined in the proof of Proposition~\ref{prop6.4}. 
The minimal desingularization of $\Spec R$ is $\Proj R[\m_Rt]$.  All but $n+1$ of the 2-dimensional regular local 
rings in $\Proj R[\m_Rt]$ are local rings in $Q_1(D)$,  the elements in $Q_1(D)$  that are missing are 
 $\alpha' = D[\frac{y}{x}]_{(x, \frac{y}{x})} $ and  
  $\alpha_{a_i} = D[\frac{x + a_{i}y}{y}]_{(y, \frac{x+ a_{i}y}{y})} , ~1 \le i \le n$.
  
  The 2-dimensional regular local rings in $\Proj R[\m_Rt]$ that are not in $Q_1(D)$ are  in $Q_2(D)$ and
  are  $ \beta' = \alpha'[\frac{x^2}{y}]_{(\frac{y}{x}, \frac{x^2}{y})} $ and 
  $\beta_{a_i}     = \alpha_{a_i}[\frac{y^2}{x + a_{i}y}]_{(\frac{x + a_{i}y}{y}, \frac{y^2}{x + a_{i}y})},   ~1 \le i \le n$. 
  
\end{remark}

\begin{theorem} \label{thm6.5}
Assume  Setting~\ref{alanexample},  and let $\U = \{\beta_a ~| ~ a \in k  \}$. 
Then $B = \O_\U =  \bigcap_{a \in k}\beta_a$  is a non-Noetherian almost Krull domain, 
and $\U$ defines an irredundant essential representation of $B$.
\end{theorem}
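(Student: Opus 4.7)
The plan is to verify four things about $B$: (a) for each $a \in k$, $\beta_a$ is the localization $B_{M_a}$ at the contraction $M_a := \m_{\beta_a} \cap B$, and the $M_a$ are pairwise distinct maximal ideals; (b) the representation $B = \bigcap_a \beta_a$ is irredundant; (c) $B$ is almost Krull; (d) $B$ is not Noetherian. The key computation underlying (a) is that the elements $t_a := (x+ay)/y$ and $u_a := y^2/(x+ay) = y/t_a$ lie in $\beta_c$ for \emph{every} $c \in k$. Indeed, $t_a = t_c + (a-c)$, which for $c \neq a$ is the sum of the regular parameter $t_c \in \m_{\beta_c}$ and the unit $a-c \in k^\times$, hence a unit of $\beta_c$; and $t_a \in \alpha_a \subseteq \beta_a$. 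Consequently $u_a = y/t_a \in \beta_c$ for every $c$, so $D[t_a, u_a] \subseteq B$. Since $\beta_a = D[t_a, u_a]_{(t_a, u_a) D[t_a, u_a]}$ (the standard description as a composite localization of $\alpha_a[u_a]$) and $M_a \cap D[t_a, u_a] = (t_a, u_a) D[t_a, u_a]$, the chain $\beta_a \subseteq B_{M_a} \subseteq \beta_a$ (the second inclusion by dominance) collapses to $B_{M_a} = \beta_a$. The $M_a$ are pairwise distinct height-two maximal ideals because $t_a \in M_a$ while $t_a$ is a unit of $\beta_b$ for $b \neq a$.

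For (b), the element $1/t_a$ lies in every $\beta_b$ with $b \neq a$ (as $t_a$ is a unit there) but not in $\beta_a$, so $\bigcap_{b \neq a} \beta_b$ properly contains $B$. For (d), set $P_a := (t_a)\beta_a \cap B$; since $B_{P_a} \subseteq (\beta_a)_{(t_a)\beta_a} = \ord_{\alpha_a}$ is contained in a DVR and $u_a \in M_a \setminus P_a$, each $P_a$ is a height-one prime of $B$, and $t_a \in P_a \setminus P_b$ for $b \neq a$ shows the $P_a$ are pairwise distinct. Each $P_a$ contains $y = t_a u_a$, so infinitely many distinct height-one primes of $B$ contain $y$. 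Because $B$ is normal as an intersection of normal rings, a Noetherian $B$ would be Krull; but a Krull domain has only finitely many height-one primes containing a fixed nonzero element, a contradiction.

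The main obstacle is (c). Since $B_{M_a} = \beta_a$ is a two-dimensional regular local ring (hence Krull) and localizations of the normal ring $B$ at height-one primes are automatically DVRs, it suffices to show that the $M_a$ exhaust the maximal ideals of $B$. My plan is to analyze an arbitrary maximal ideal $Q$ of $B$ through the image of $t_0$ in the residue field $B/Q$. A preliminary dimension argument rules out $Q \cap D \neq \m_D$: otherwise $B_Q$ would be an overring of a DVR of $F$, hence a DVR or $F$, contradicting $\dim B_Q = 2$. Thus $B/Q$ is a field extension of $k$. If the image of $t_0$ in $B/Q$ is algebraic over $k$, then since $k$ is algebraically closed it equals $-a_0$ for some $a_0 \in k$, forcing $t_{a_0} \in Q$; a rank argument on a valuation dominating $B_Q$ (otherwise the dominating valuation would be the rank-one $\ord_{\alpha_{a_0}}$, giving $Q$ height one) then forces $u_{a_0} \in Q$ as well, so $M_{a_0} \subseteq Q$ and hence $Q = M_{a_0}$. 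If the image of $t_0$ is transcendental, then $y \in \m_D \subseteq Q$ forces every $u_a \in Q$; any rank-two valuation dominating $B_Q$ then factors as $\ord_D$ composed with a DVR of $k(\overline{t_0})$ centered at some $\overline{t_0} + c$, and this ultimately forces $Q \supseteq M_{-c}$ and hence $Q = M_{-c}$. With (c) in hand, every maximal localization of $B$ is some $\beta_a$, so $B$ is almost Krull, completing the proof.
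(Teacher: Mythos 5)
Your parts (a), (b) and (d) coincide with the paper's own argument: the paper likewise shows $\frac{y^2}{x+ay}\in\beta_b$ for all $b$ by computing that the transform of $x+ay$ in $\alpha_b$ is a unit when $b\ne a$, identifies $\beta_a=B_{M_a}$ with $M_a=(\frac{x+ay}{y},\frac{y^2}{x+ay})B$, gets irredundance from $\frac{x+ay}{y}$ being a unit of $\beta_b$ for $b\ne a$, and gets non‑Noetherianity from the infinitely many height-one primes $t_a\beta_a\cap B$ lying over $\m_D$ (so the integrally closed ring $B$ is not Krull, hence not Noetherian). Where you diverge is (c): the paper's printed proof does not actually verify the almost Krull assertion beyond exhibiting the localizations $B_{M_a}=\beta_a$, $B_{\p_a}=\ord_{\alpha_a}$ and $B_\p=\ord_D$, whereas you correctly recognize that one must account for \emph{every} maximal ideal of $B$, and your strategy — dominate $B_Q$ by a valuation ring and locate it in the tree — is exactly the technique the paper deploys in the proof of Theorem~\ref{thm6.4}. (One small simplification: once ${\rm Max}(B)$ is controlled you do not need the dubious claim that localizations of a normal domain at height-one primes are DVRs; every prime contained in some $M_a$ localizes to a localization of the Krull domain $\beta_a$.)

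That said, your sketch of (c) has genuine gaps. First, the ``preliminary dimension argument'' is circular: you do not know $\dim B_Q=2$ in advance. This is harmless, since a maximal ideal $Q$ with $Q\cap D$ of height one gives $B_Q$ an overring of the DVR $D_{Q\cap D}$, hence a DVR, which is Krull; such $Q$ need not be excluded at all. Second, and more seriously, in the algebraic case your ``rank argument'' only disposes of the subcase where the residue of $u_{a_0}=\frac{y^2}{x+a_0y}$ in the dominating valuation ring $W$ is transcendental over $k$ (then $W=\ord_{\alpha_{a_0}}$ and $Q=\p_{a_0}$ has height one). It does not address the subcase where $u_{a_0}$ is a unit of $W$ with residue a nonzero constant $c\in k$: there $W$ dominates the point $\alpha_{a_0}[u_{a_0}]_{(t_{a_0},\,u_{a_0}-c)}\in Q_2(D)$ distinct from $\beta_{a_0}$, and nothing about rank rules this out. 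The missing idea is that $u_{a_0}-c$ is congruent to $-c$ modulo $\m_{\beta_b}$ for \emph{every} $b\in k$, hence is a unit of $B$, hence cannot lie in $Q=\m_W\cap B$; without this observation the classification of ${\rm Max}(B)$ is incomplete. Third, the transcendental case is internally inconsistent as written: if the image of $t_0=x/y$ in $B/Q$ is transcendental over $k$, then no $t_0-c$ lies in $Q$, so the dominating valuation cannot be a composite centered at $\overline{t_0}+c$ and the conclusion cannot be $Q=M_{-c}$ (which contains $t_0-(-(-c))$ and would force $\overline{t_0}$ algebraic). The correct resolution is that this case is vacuous: transcendence of $\overline{t_0}$ forces every $u_a\in Q$ as you note, the center of $W$ on $D[t_0]$ is then the height-one prime $yD[t_0]$, so $W=\ord_D$ and $Q=\p$, which is not maximal because $\p\subsetneq M_a$ for every $a$.
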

\begin{proof}
Observe that $D[\frac{x}{y}] \subseteq  B$.
Let $a, b \in k$. Then $\frac{y^2}{x + ay} \in \beta_b \iff  y^2 \in (x + ay)\beta_b$.  If $a = b$ 
this follows because $\frac{y^2}{x + ay} \in \beta_a$.
Assume $a \ne b$.  To compute the transform of $x + ay$ in 
$\alpha_b = D[\frac{x}{y}]_{(y, \frac{x}{y} + b)}$,  let $x_1 = \frac{x}{y}$. Then $x = yx_1$  and
$ x + ay  = yx_1 + ay = y(x_1 + a)$.  Since $a \ne b$, $x_1 + a$ is a unit of $\alpha_b$ and hence 
also a unit of $\beta_b$.  Therefore $y^2 \in (x + ay)\beta_b$ also in this case.
It follows that $D[\frac{x}{y}][\{ \frac{y^2}{x + ay} ~|~a \in k \}] \subseteq B$.

Let $\q_a = (\frac{x + ay}{y})\beta_a$.  Then $\q_a$ is a height 1 prime of $\beta_a$ and 
 $(\beta_a)_{\q_a}$ is the order valuation ring of $\alpha_a$.
The image of $\frac{y^2}{x + ay}$  in the field of fractions  of $\beta_a/\q_a$ is transcendental over $k$. 
Hence  $\p_a  := \q_a \cap B$ is a height 1 prime ideal of $B$ and  $ B_{\p_a} = (\beta_a)_{\q_a}$. 

Since $\p_a \cap D = \m_D$ for every $a \in k$, the elements in $\m_D$ are in infinitely many 
height 1 primes of $B$. It follows that $B$ is not a Krull domain and is not Noetherian. 

The center of $\beta_a$ on $B$ is the maximal ideal $(\frac{x + ay}{y}, \frac{y^2}{x + ay})B$. 
Since $\beta_a$ dominates $\alpha_a$ for each $a \in k$,  the element $\frac{x + ay}{y}$ is a 
unit in $\beta_b$ for each $b \in k$ with $b \ne a$.  
It follows that the representation $B = \bigcap_{a \in k}\beta_a$ is irredundant. 

The center on $B$  of the order valuation ring $\ord_D$ of $D$ is 
the ideal 
$$
\p = (x, y, \{\frac{y^2}{x + ay} ~|~ a \in k \})B.
$$
Then $\p$ is a nonmaximal
prime of $B$  and $B_\p = \ord_D$.

Since  $\beta_a = B_{(\frac{x+ay}{y}, \frac{y^2}{x+ay})B}$ for each $a \in k$, each 
 $\beta_a$ is essential in the representation $B = \bigcap_{a\in k}\beta_a$. 
\end{proof}

\begin{remark}
The prime divisors of the 2nd kind for   $D$ 
that are centered on height~1 primes of $C$  are the   order 
valuation rings of the $\alpha_a$ and  the order valuation ring  of  $\alpha'$.

Let $\p_a = (\frac{x + ay}{y})\beta_a$.  Then $\p_a$ is a height 1 prime of $\beta_a$ and 
 $(\beta_a)_{\p_a}$ is the order valuation ring of $\alpha_a$.
The image of $\frac{y^2}{x + ay}$  in the field of fractions  of $\beta_a/\p_a$ is transcendental over $k$. 
Hence  $\p_a \cap C$ is a height 1 prime ideal of $C$ and  $ C_{\p_a \cap C} = (\beta_a)_{\p_a}$.

Let $\p' = (\frac{y}{x})\beta'$.  Then $\p'$ is a height 1 prime of $\beta'$ and $\beta'_{\p'}$ is the 
order valuation ring of $\alpha'$.  
The image of $\frac{x^2}{y}$  in the field of fractions  of $\beta'/\p'$ is transcendental over $k$. 
Hence  $\p' \cap C$ is a height 1 prime ideal of $C$ and  $ C_{\p' \cap C} = (\beta')_{\p'}$.

The height 1 prime $\p$ of $B$ such that $B_\p = \ord_D$ has the property that $\p \cap C = \m_C$,
the maximal ideal of $C$.  If $\q$ is a height 1 prime of $B$ such that $\q \neq \p$,  
then $\q \cap C$ is a height 1 prime of $C$ and $C_{(\q \cap C)} = B_\q$.  
\end{remark}

Theorem~\ref{thm6.4} describes a  non-Noetherian local domain in $\R(D)$.

\begin{theorem}  \label{thm6.4}
Assume notation as in Setting~\ref{alanexample}  and in Theorem~\ref{thm6.5}.   
Then  $C$  is a 
non-Noetherian local domain in $\R(D)$.  Indeed, 
$C =  C' =   \bigcap_{a \in k}\beta_a  \cap \beta'   = B \cap \beta' $.  \end{theorem}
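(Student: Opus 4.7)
The plan is to establish the identity $C=C'$ by mutual inclusion, after which the remaining assertions follow easily.

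For the inclusion $C\subseteq C'$ I would first verify that $A\subseteq\beta_a$ for every $a\in k$ and $A\subseteq\beta'$. Using the regular parameters $u_a=(x+ay)/y$ and $z_a=y^2/(x+ay)$ of $\beta_a$, the identities $y=u_a z_a$ and $x+by=u_a z_a(u_a+b-a)$ give $y^2/(x+by)=u_a z_a/(u_a+b-a)$. This equals $z_a$ when $b=a$, and otherwise the denominator is a unit of $\beta_a$; in both cases the element lies in $\m_{\beta_a}$. A parallel calculation shows $x^2/y\in\m_{\beta_a}$, and in the parameters $y/x$ and $x^2/y$ of $\beta'$ one checks each generator of $A$ lies in $\m_{\beta'}$. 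Hence $A\subseteq B\cap\beta'$, and moreover every generator of $\m$ lies in $\m_{\beta_a}$ and in $\m_{\beta'}$; by maximality $\m=\m_{\beta_a}\cap A=\m_{\beta'}\cap A$. Consequently every element of $A\setminus\m$ is a unit in each $\beta_a$ and in $\beta'$, so the localization $C=A_\m$ embeds into each, giving $C\subseteq B\cap\beta'=C'$.

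For the reverse inclusion I would argue by valuation theory. Proposition~\ref{prop6.4} presents $C$ as a directed union of normal Noetherian local domains, so $C$ is a local integrally closed domain and therefore equals the intersection of its dominating valuation overrings (using Remark~\ref{note2.1}.4: if $f\notin R$ for an integrally closed local $R$, then $f$ fails to lie in some dominating valuation overring). The key claim is that a valuation overring $V$ of $D$ dominates $C$ if and only if $V$ dominates some $\beta_c$ with $c\in k$, or $V$ dominates $\beta'$. If $V$ dominates $\beta_c$, then $\m\subseteq\m_{\beta_c}\subseteq\m_V$, so $V$ dominates $A_\m=C$; the $\beta'$ case is analogous. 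Conversely, if $V$ dominates $C$, then $V$ dominates $D$ and hence dominates a unique element of $Q_1(D)=\{\alpha'\}\cup\{\alpha_c:c\in k\}$; if $V$ dominates $\alpha_c$, then $u_c\in\m_{\alpha_c}\subseteq\m_V$ while $z_c$ is a generator of $\m$ and so also lies in $\m_V$, giving $\m_{\beta_c}=(u_c,z_c)\beta_c\subseteq\m_V$. The $\alpha'$ case is similar. Since each $\beta_c$ and $\beta'$ is itself the intersection of its dominating valuation overrings, we obtain
\[
C \;=\; \bigcap_{V\text{ dominates }C} V \;=\; \bigcap_{c\in k}\beta_c\,\cap\,\beta' \;=\; B\cap\beta' \;=\; C'.
\]

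It remains to verify $C\in\R(D)$ and that $C$ is not Noetherian. The first is immediate because $C=B\cap\beta'$ is an intersection of rings in $Q_2(D)\subseteq Q(D)$. For non-Noetherianness, the remark immediately following Theorem~\ref{thm6.5} shows that for each $a\in k$, $\p_a\cap C$ is a height-one prime of $C$ with $C_{\p_a\cap C}=\ord_{\alpha_a}$; since distinct $a$ yield distinct DVRs, the primes $\p_a\cap C$ are pairwise distinct. Each contains $\m_D$ and hence contains $x$, and since $k$ is algebraically closed and therefore infinite, $xC$ lies in infinitely many height-one primes of $C$, contradicting Krull's Hauptidealsatz for Noetherian rings. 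The principal obstacle is the valuation-theoretic step in the second paragraph, where one must correctly identify which valuation overrings dominate $C$; once this characterization is established, the remaining steps are standard facts about integrally closed local domains together with direct computations in the regular parameters of $\beta_a$ and $\beta'$.
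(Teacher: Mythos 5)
Your overall strategy is essentially the paper's: establish $C\subseteq B\cap\beta'$ by explicit computation with the regular parameters of $\beta_a$ and $\beta'$, then obtain the reverse inclusion by reducing to valuation overrings dominating $C$ (legitimate, since $C$ is integrally closed by Proposition~\ref{prop6.4} and, via the composite construction of Remark~\ref{note2.1}.4, an integrally closed local domain is the intersection of its dominating valuation overrings), and finally deduce non-Noetherianness and membership in $\R(D)$ as in the remark following Theorem~\ref{thm6.5}. Those parts are sound. The problem is the ``key claim'' of your second paragraph: the asserted equivalence ``$V$ dominates $C$ if and only if $V$ dominates some $\beta_c$ or $\beta'$'' is false, and your argument for the direction you actually need has an unhandled case. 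The order valuation ring $\ord_D$ dominates $C$ --- Setting~\ref{alanexample} records that $\ord_D$ is centered on the maximal ideal $\m$ of $A$, hence on $\m_C$ --- yet $\ord_D$ dominates no element of $Q_1(D)$ (its center on the blow-up of $\m_D$ is the generic point of the exceptional curve, not a closed point), and consequently dominates none of the $\beta_c$ and not $\beta'$. So the step ``if $V$ dominates $C$, then $V$ dominates a unique element of $Q_1(D)$'' fails exactly at $V=\ord_D$, and this valuation ring is a member of the family $\{V : V \text{ dominates } C\}$ over which you intersect, so it cannot be ignored.

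The gap is repairable, and the repair is precisely the step the paper makes explicit: a valuation ring dominating $D$ either dominates a unique point of $Q_1(D)$ or contains, hence equals, the DVR $\ord_D$; and in the latter case one still has $C'\subseteq\beta'\subset\ord_D$, because $\beta'$ (like each $\beta_a$) is proximate to $D$, i.e.\ contained in $\ord_D$ (Remark~\ref{rm6.15}, or a direct order computation). In the paper's proof this appears in the case $v(x)=v(y)$: it first disposes of $V=\ord_D$ by noting that $\ord_D$ contains $C'$, and only for $V\neq\ord_D$ uses the algebraically closed residue field to produce $a\in k$ with $v(x+ay)>v(x)=v(y)$ and conclude $\beta_a\subseteq V$. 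Once you insert this missing case, your argument closes and agrees with the paper's; the remaining details (domination of $\alpha_c$ forcing $\beta_c\subseteq V$, the Hauptidealsatz argument with the infinitely many height-one primes $\p_a\cap C$ containing $x$, and $C=B\cap\beta'\in\R(D)$) are correct as written.
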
 
\begin{proof} 
It is  established in Theorem~\ref{thm6.5}    that $C \subseteq B$. 
To see that $C \subseteq C'$,  we show that $\frac{y^2}{x + ay} \in \beta'$.
Consider $(x + ay)\alpha'$ and set $y_1 = \frac{y}{x}$.
Then $x + ay = x + axy_1 = x(1 + ay_1)$.  Since $1 + ay_1$ is a unit of $\alpha'$ 
and $y^2 \in x \alpha'$, it follows that $y^2 \in (x+ ay)\beta'$. 
Therefore,  for each $a \in k$,    $\frac{y^2}{x + ay} \in B  \cap \beta'  $.   

It is clear that $\frac{x^2}{y}   \in \beta'$,  and $\frac{x^2}{y} \in \beta_a \iff x^2 \in y\beta_a$.
Since   $y_1 = \frac{y}{x} \in \alpha_a $,   we have
$x^2 \in y\alpha_a = xy_1\alpha_a$  because $x \in y_1\alpha_a$.  Then $\alpha_a  \subseteq \beta_a$
implies that $C \subseteq \bigcap_{a \in k} \beta_a \cap \beta' $.

It remains to show that $C' = B  \cap \beta'     \subseteq C$.  To prove this,  it suffices to prove
that each minimal valuation overring  of $C$ contains $C'$.  A 
minimal valuation overring $V$  of $C$
dominates    $C$ by Remark~\ref{note2.1}.5.  Therefore  $V$   also dominates $D$. 

Let $V$ be a    minimal   valuation overring of $C$,  and let $v$ denote 
a valuation associated to $V$.
Then  $v(x) > 0$ and $v(y) > 0$.  
Since $x^2/y$ and 
$y^2/x$ are in the maximal ideal of $C$,  we also have $v(x^2) > v(y)$ and $v(y^2) > v(x)$.

If $v(x) > v(y)$,  then $D[x/y]_{(y,  x/y)D[x/y]}~ \subseteq  ~  V$.   Since $y^2/x \in C$,  it follows that 
$D[x/y, y^2/x]_{(x/y,  y^2/x)D[x/y, y^2/x]}~ \subseteq ~V$.  Therefore $C' \subseteq V$ in this case.

If $v(y) > v(x)$,  then $D[y/x]_{(x,  y/x)D[y/x]}~ \subseteq  ~  V$.   Since $x^2/y \in C$,  it follows that 
$D[y/x, x^2/y]_{(y/x,  x^2/y)D[y/x, x^2/y]}~ \subseteq ~V$.  Therefore $C' \subseteq V$ in this case.

If $v(x) = v(y)$,   then $D[y/x] \subset V$.  Since  $D[y/x]_{xD[y/x]} = \ord_D$ and $\ord_D$ contains $ C'$,  
we may assume that $V \neq \ord_D$.  Then $V$ is centered on a maximal ideal of $D[y/x]$ that lies over $\m_D$. 
Since $D$ has an algebraically closed coefficient field $k$, it follows that there exists $a \in k$ such that 
$v(x + ay) > v(x) = v(y)$.  Therefore 
$$
\alpha_a  ~=  ~ D[\frac{x + ay}{y}]_{(y, \frac{x+ ay}{y})}  ~ \subseteq  ~V
  $$

and $V$ dominates $\alpha_a$.  Then $y^2/(x + ay)  ~\in ~  \m_C  ~\subseteq  ~\m_V$ implies that 
$$  
  \beta_a ~ =~ \alpha_a[\frac{y^2}{x + ay}]_{(\frac{x + ay}{y}, \frac{y^2}{x + ay})} ~ \subseteq   ~ V.
  $$

Therefore $C'  \subseteq V$ also in this case. This completes the proof that $C = C'$.    
        \end{proof} 

\begin{remark} \label{rm6.15}
{The infinitely near points $\beta_a$ and $\beta'$ of Setting~\ref{alanexample} are all proximate
to $D$.  Hence the ring $B$ of Theorem~\ref{thm6.5} and the ring
$C$ of Theorem~\ref{thm6.4} are defined by Noetherian subsets of $Q(D)$.} 
\end{remark} 

The non-Noetherian rings $R \in \R(D)$ that we have described have 
infinitely many height one primes that contain  $\m_D$.  This motivates us to ask: 

\begin{question} \label{ques6.19}
Let  $R \in \R(D)$.   If $\Spec R$ is  a Noetherian topological space,
does it follow that $R$  is a Noetherian ring?
\end{question} 

\begin{comments}
Let $R = \O_\U$,   where  $\U \subset Q(D)$ is  as 
in Remark~\ref{rm3.3},  that is,  the rings $\alpha \in \U$ are incomparable and are minimal in $Q(D)$ 
with respect to containing $R$. 

 Let 
$\E = \{ V ~|~ V \text{ is an essential valuation ring for some } \alpha \in \U \}.$    It is clear that 
$R = \bigcap\{ V ~|~ V \in \E \}$.  
Each $V \in \E$ is 
either an essential valuation for $D$ or a prime divisor of the second kind on $D$.   If $V$ is 
a prime divisor of the second kind on $D$, then $\m_D$ is contained in the center of $V$ on $R$. 
Related to Question~\ref{ques6.19},  we have:

\begin{enumerate}
\item
There exist Noetherian subspaces $\U$ of $Q(D)$ for which the ring $R = \O_\U$ 
fails to have Noetherian spectrum.  The ring $B$ in Theorem~\ref{thm6.5} and 
the ring $C$ in Theorem~\ref{thm6.4} both fail to have Noetherian spectrum,  and both 
are defined by Noetherian subspaces of $Q(D)$.

\item    If 
there are only finitely many $V \in \E$ that are of the second kind on $D$, then the set $\E$ has
finite character and $R$ is a Krull domain, so $R$ is Noetherian. 

\item
If $\Spec R$ is Noetherian, then $\m_D$ is contained in only finitely many height one primes of $R$. 

\item
Each $W \in \E$ that is irredundant in the representation $R = \bigcap \{V ~| ~V \in \E \}$ is a
localization of $R$ and thus is centered on a height one prime of $R$, cf. \cite[Theorem 5.1, p. 330]{Ohm}.
Therefore $\Spec R$ 
is Noetherian  implies only finitely many $W \in \E$ of the second kind on $D$ are irredundant 
in the representation  $R = \bigcap \{V ~| ~V \in \E \}$.

\end{enumerate}

\end{comments}

\begin{remark}   \label{rmk6.21}
As a generalization of the situation  considered in this paper and in \cite{HO},  let
$A$ be a 2-dimensional Noetherian regular  integral domain\footnote{For example, $A$ could be a 
polynomial ring in 2 variables over a field.}  and let $Q(A)$  denote the 
``quadratic tree'' of   2-dimensional regular local overrings of $A$.  The set $Q(A)$ is 
partially ordered with respect to inclusion and has properties similar to  those used 
in this paper and in \cite{HO}.  

Let $\R(A)$ denote the family of rings obtained as intersections of rings in $Q(A)$.
It would be interesting to examine topological properties of $Q(A)$, and to 
examine the structure of rings in the set $\R(A)$.

\end{remark}




\end{document}